 \pgfplotsset{compat=1.15}
\newtheorem{theorem}{Theorem}[section]
\newtheorem{corollary}[theorem]{Corollary}
\newtheorem{definition}[theorem]{Def{}inition}
\newtheorem{lemma}[theorem]{Lemma}
\newtheorem{proposition}[theorem]{Proposition}
\newtheorem{remark}[theorem]{Remark}
\numberwithin{equation}{section}
\newcommand{\ds}{\displaystyle}
\newcommand{\R}{{\mathbb R}}
\newcommand{\C}{\mathbb C}
\newcommand{\Spo}{\S_{\Gamma}} 
\newcommand{\N}{\mathcal{N}}              
\renewcommand{\R}{\mathbb{R}\,}           
\newcommand{\G}{\bm G\,}                  
\renewcommand{\div}{\mathrm{div}\,}       
\newcommand{\curl}{{\bf{curl}}\,}         
\newcommand{\divS}{\mathrm{div}_\Gamma}   
\newcommand{\curlS}{\mathrm{curl}_\Gamma} 
\newcommand{\e}{{\bm e}}               
\newcommand{\f}{{\bm f}}               
\newcommand{\g}{\bm g}                 
\newcommand{\J}{\mathcal J}            
\newcommand{\jc}{\bm j_{\bm c}}        
\newcommand{\n}{\bm n}                 
\newcommand{\q}{\bm q}                 
\renewcommand{\t}{\bm t}               
\renewcommand{\u}{\bm u}               
\renewcommand{\v}{\bm v}               
\newcommand{\w}{\bm w}                 
\newcommand{\x}{\bm x}                 
\newcommand{\y}{\bm y}                 
\newcommand{\z}{\bm z}                 
\newcommand{\barz}{\overline{\bm z}}   
\newcommand{\baru}{\overline{\bm u}}   
\newcommand{\barv}{\overline{\bm v}}   
\newcommand{\barw}{\overline{\bm w}}   
\newcommand{\tth}{{\tilde h}}                
\newcommand{\ttz}{{\widetilde{\bm z}}}    
\newcommand{\Hcurl}{{H}({\bf curl};\Omega)}            
\newcommand{\Hzcurl}{{H}_0(\mathrm{\bf curl};\Omega)}  
\newcommand{\Hdiv}{{H}(\mathrm{div};\Omega)}     
\newcommand{\Hzdiv}{{H_0}(\mathrm{div};\Omega)}  
\newcommand{\Lt }{L_t^2(\Gamma)}                
\renewcommand{\H}{{\bf H} }                     
\renewcommand{\L}{{\bf L}^2(\Omega) }           
\renewcommand{\S}{\mathcal{S}}                  
\newcommand{\U}{\mathcal{U}}                    
\newcommand{\Th}{\mathcal{T}_{h}}               
\newcommand{\HdivS}{{H}(\mathrm{div}_\Gamma;\Gamma)}    
\newcommand{\HcurlS}{{H}(\mathrm{curl}_\Gamma;\Gamma)}  
\newcommand{\Eh}{{\mathcal{E}_\Gamma^h}}        
\newcommand{\Lh}{{\mathcal{L}_h}}               
\newcommand{\Nkh}{{\mathcal{N}^h_k}}            
\newcommand{\EGh}{\mathcal{E}_{\Gamma}^h}       
\def\i{\mathrm{i}}
\begin{document}

\title{Boundary Control of Time-Harmonic Eddy Current Equations}

\thanks{
This work is partially supported by NSF grants DMS-2110263, DMS-1913004, DMS-2111315, the Air Force Office of Scientific Research (AFOSR) under Award NO: FA9550-19-1-0036 and FA9550-22-1-0248, and Department of Navy, Naval PostGraduate School under Award NO: N00244-20-1-0005.
}

	\author{Harbir Antil}
	\address{Harbir Antil. Department of Mathematical Sciences and the Center for Mathematics and Artificial Intelligence (CMAI), George Mason University, Fairfax, VA 22030, USA.}
	\email{hantil@gmu.edu}
	\author{Hugo D\'iaz}
	\address{Hugo D\'iaz. Department of Mathematical Sciences, University of Delaware, Newark, DE 19716, USA.}
	\email{hugodiaz@udel.edu}

\begin{abstract}
Motivated by various applications, this article develops the notion of boundary control for Maxwell's equations in the 
frequency domain. Surface curl is shown to be the appropriate regularization in order for the optimal control problem
to be well-posed. Since, all underlying variables are assumed to be complex valued,  the standard results 
on differentiability do not directly apply. Instead, we extend the notion of Wirtinger derivatives to complexified Hilbert spaces.
Optimality conditions are rigorously derived and higher order boundary regularity of the adjoint variable is established.
The state and adjoint variables are discretized using higher order N\'ed\'elec finite elements. The finite element space
for controls is identified, as a space, which preserves the structure of the control regularization. Convergence of the fully discrete 
scheme is established. The theory is validated by numerical experiments, in some cases, motivated by realistic 
applications.
\end{abstract}

\keywords{Maxwell's equations, boundary optimal control, surface curl regularization, Wirtinger derivatives, higher order N\'ed\'elec finite elements, convergence analysis}

\subjclass[2010]
{
	49J20, 49K20, 65M60, 78M10
}

\maketitle

\section{Introduction}

\subsection{Motivation}
\label{s:mot}

In recent years, problems related to controllability and optimal control constrained by Maxwell's equations have received a significant amount of attention in both time and frequency domain, see for instance a series of works \cite{MR1750615,MR3460108,caselli,MR3023745,MR984833,MR1824104,MR1932973,MR3259029,MR3515109,MR2891467,MR2957021,MR2925786,MR3105785,MR3377426}. Most of the existing literature focuses on the case where the control is in the interior, though the boundary control case can be found in some limited number of references
\cite{MR1750615,MR984833,MR1824104,MR1932973}. 
The novelty in this paper is to work in the frequency domain; where the previous works are in the time domain. The 
paper also deals with appropriate tangential traces (control space) of $H(\curl)$ when the  physical domain  is just Lipschitz polyhedral, unlike the previous works where either no numerical method is given or the domain is assumed to be smooth. This makes the approach introduced in these articles somewhat impractical. Indeed, no numerical examples are provided.
 We also present full convergence analysis for the fully discrete scheme, and a numerical implementation with finite elements. In addition, the paper works under a generic setting where all the variables are complex valued and it introduces a framework to carry out complex differentiation in Hilbert spaces.

Maxwell's equations in the non-smooth setting (Lipschitz polyhedral domains) with \linebreak non-homogeneous boundary conditions is rather recent \cite{buffa:I,buffa:II,tartar}. In addition, the underlying functional analytic framework is delicate. Nevertheless, such problems with non-homogenous boundary conditions, with non-smooth boundaries, do occur in realistic applications; for instance, microwave oven, also see \cite{bossavit}.
It is critical to resolve the physical domain, for instance, using the finite element method, which allows a 
simple implementation and it has a  well-studied theoretical framework, see \cite{MR2059447}.

Motivated by \cite{bossavit}, the articles \cite{BeRoSa2005-2,BeRoSa2005} introduced yet another boundary value problem for Maxwell equation (cf.~\eqref{eq:Rodolfo}) where the boundary conditions are on certain electrodes. Notice that the non-homogeneous boundary conditions of the type considered in the current paper also appear in scattering theory of electromagnetic fields. For instance, the incident and scattered  fields denoted by  $\bm E^i$ and $\bm E^s$, respectively, satisfy the ``boundary'' condition: $\bm E^s\!\times\! \n=-\bm E^i\!\times\! \n$. Here $\n$ is the outward unit normal.
These works forms the motivation for us to study boundary optimal control of time-harmonic Maxwell's equations.

\subsection{Problem Setup}

Let $\Omega \subset \R^3$ be a polyhedron with a Lipschitz continuous boundary denoted by $\Gamma$. Moreover, let the current density $\jc \in L^2(\Omega;\mathbb{C}^3)$, a desired field vector $\u_d$ in $ L^2(\Omega;\mathbb{C}^3)$, and symmetric and positive definite functions ${\bm \kappa}$ and ${\bm \mu}$  in  $L^\infty(\Omega;\R^{3\times 3})$,  
positive  constants  $\alpha$ and $\beta$,  and $\omega\neq 0$, be given. If $(\u,\z)$ represents the state-control pair, then 
the goal of this work is to study the following boundary optimal control problem:
\begin{subequations}\label{eq:ocp}
	\begin{align} \label{eq:obFun}
            \displaystyle \min_{(\u,\z) \in U \times Z} \left\{ \J(\u,{\bm z}) := f(\u) 
            	+\frac12 \left( \alpha \| \curlS \z\|_{L^2(\Gamma)}^2 
            	+ \beta \| \z \|^2_{L^2(\Gamma)} \right) \right\}, 
	\end{align}
subject to the time-harmonic Maxwell's equations as constraints 
	\begin{align}\label{eq:state}
	\begin{aligned}
		\curl \left(  {\bm \mu}^{-1}\curl \u\right)+(\i\omega){\bm \kappa} \u &= \jc \quad \mbox{in } \Omega,  \\
                    \u \times \n &= \z\times \n		\quad \mbox{on }  \Gamma . 
	\end{aligned}	                    
	\end{align}
\end{subequations}	
In \eqref{eq:obFun}, $\curlS$ denotes the scalar surface curl. For a precise definition of $\curlS$, the surface divergence $\divS$, tangential gradient $\nabla_\Gamma$, and the tangential vector curl $\bm \curlS$,  when $\Omega$ is a Lipschitz polyhedron, see \cite{buffa:I,buffa:II}.

To the best of our knowledge, this is the first work on boundary control of Maxwell's equations with complete  analysis and numerical implementation in the time harmonic setting.
Another key aspect that sets us apart from the existing literature on optimal control of Maxwell type 
equations is the fact we are working in the complex setting, without assuming splitting between real and 
imaginary parts.
 
However, this introduces additional challenges. For instance, even the standard 
quadratic cost functional 
	\begin{align}
	 f(\u) := \frac12 \int_\Omega |\u-\u_d|^2 \,d\x , \label{def:quadraticTerm}
	\end{align}
is not differentiable in the (complex) G\^ateaux sense, thus making most of the existing gradient
based optimization algorithms not directly applicable.  Provided that we can define an appropriate
notion of derivatives, the complex structure leads to elegant analysis. Additionally, 
most of the modern programming languages can handle complex arithmetic, therefore it is  
meaningful to work under the complex field directly.
In the particular case of quadratic functionals of the form (\ref{def:quadraticTerm}), the differentiability problem can be addressed via
{\it directional derivatives} as in  \cite[Sec. 3.2]{MR3515109}, where they consider an optimal current problem with a complex control related to impressed currents (internal control), see also \cite[Sec. 4.1]{caselli}. Our approach allows us to consider a much wider family of functionals, and leads to the same derivative for the quadratic case.

Our notion of derivatives is motivated by the {\it Wirtinger derivative} (1927) \cite{Wirtinger27}, which
is a well-known concept in finite dimensions. Its origin  can be traced back at least to 1899 in a work 
of {\it J.H. Poincar\'e} on potentials \cite[Th\'eor\`eme 8]{Poincare1898}. This notion of derivatives is 
motivated by splitting of a function into  its real and imaginary components. However, it allows one to
work in the complex regime, without using any such splitting. 
 Wirtinger derivatives have been used in the finite dimensional optimization problems at least since 
the 60's in the works of Levinson,  Mond, Hanson and Kaul, cf.  \cite{Levinson66,MondHanson68,Kaul71}, 
and  in the engineering community since the 80's, see \cite{Brandwood1983}, with applications from 
{\it signal theory}  to {\it Machine learning}, see \cite{Bouboulis2012}. We refer to  \cite{Kaup83,kreutz}  
for more details on Wirtinger derivatives. 

In this work, we extend the notion of derivatives from finite dimensions to infinite dimensions with 
complex fields and rigorously derive the optimality conditions at the continuous level and identify 
continuous gradients. We discretize our state and adjoint variables using higher order N\'ed\'elec finite 
elements and for the control we use the lowest order N\'ed\'elec elements on each boundary face, 
which by construction have continuous tangential components. 
Next, we establish convergence of our numerical scheme. Numerical experiments confirms
 our theoretical findings. In particular, in our first experiment, motivated by
\cite{BeRoSa2005-2,BeRoSa2005}, we consider a realistic application with non-homogeneous
boundary conditions where we first derive an explicit solution and next we validate our 
N\'ed\'elec finite element implementation against this explicit solution, the expected rate of 
convergence is observed. In the second experiment, we study the convergence of optimal
control problem, see section~\ref{s:num} for more details.

\noindent 
{\bf Outline:} In section~\ref{s:not}, we introduce some notation and establish the well-posedness of the state equation. 
Section~\ref{sec:rcf} first establishes existence of solution to the control problem. Next, section~\ref{s:Wirderiv} introduces
the notion of Wirtinger derivatives on complex Hilbert spaces and derives abstract optimality conditions. 
This is followed by a rigorous derivation of optimality conditions for our problem in section~\ref{s:optcond}. Well-posedness
of adjoint equation has also been established. Additional regularity for the adjoint equation and the optimality system   
have been provided in section~\ref{s:addreg}. Section~\ref{s:discprob} introduces the discrete optimal control problem.
Details on imposing non-zero boundary conditions have been provided in section~\ref{s:discbc}. Moreover, 
section~\ref{s:discsoln} is devoted to best approximation results for the state equations. The precise choice of 
the control space is discussed in section~\ref{s:ctrlspace}. Section~\ref{s:discopt} discusses the regularity of discrete 
adjoint equation and derives the discrete optimality system. Finally, in section~\ref{s:convan}, we provide convergence 
analysis of the optimal control problem. In Subsection~\ref{s:reg}, we establish that the lower order terms can be dropped,
i.e., $\beta$ in \eqref{eq:obFun} can be set to zero.
Section~\ref{s:num} is devoted to numerical examples which confirms the theoretical results.

\section{Notation and preliminary results}
\label{s:not}
From now on, if $X$ is a set of scalars, we use the  notation ${\bf X} $ to denote $(X)^3$, i.e., vectors.  For a vector space $V$, we will use the notation $V^*$ at times to refer to the space of linear functionals on $V$, and at other times we refer to the space conjugate linear functionals on $V$ (see, for example, \cite[p. 168]{OdDe2018}), this choice will be clear from the context. In what follows, we will need to identify the restriction of functions onto polygonal faces of $\Omega$. For this purpose we use the notation $\Gamma = \bigcup \overline{\Gamma}_i$, where each $\Gamma_i$ is an open subset of $\Gamma$ such that its closure is a face of $\Gamma$ and the $\Gamma_i$ are pairwise disjoint.  We use $\n$ to denote the outward unit normal. Next, we define the Hilbert spaces, endowed  with their standard inner products and norms: 
\begin{equation} \label{def:Spaces}
  \begin{aligned}
L^2(\Omega):=& L^2(\Omega;\mathbb{C}),\\
\Hzdiv :=& \{\v\in \Hdiv \colon\,  \gamma_{\n}(\v) := \v \cdot \n=0 \mbox{ on } \Gamma \},\\
\Hcurl:=& \left\{ \v\in {\bf L}^2\left(\Omega\right)\colon\, \curl \v \in {\bf L}^2\left(\Omega\right)\right\}, 
\\
 \Lt:=& \bigl\{\v \in L^2\left(\Gamma;\C^3\right)\colon\, \v\cdot \n= 0 \mbox{ a.e. on } \Gamma\bigr\}, \\
\HcurlS :=& \left\{\v\in \Lt\colon\, \curlS \v\in L^2(\Gamma) \right\},\\
\HdivS :=& \left\{\v\in \Lt\colon\, \divS \v\in L^2(\Gamma) \right\},\\
H_{\!\bm -}^{{\frac 12}}(\Gamma):=&\left\{\bm \lambda\in \Lt\colon\, \bm \lambda|_{\Gamma_i}\in \H^{{\frac 12}}(\Gamma_i), \mbox{ for each face $\Gamma_i \subset \Gamma$}    \right\},\\
  \end{aligned}
\end{equation}
where all the differential operators are in the sense of distributions, cf. \cite[Ch. I]{MR548867}. Unless stated otherwise, we will use the notation $(\cdot, \cdot)_{0,\Omega}$ and $\|\cdot\|_{0,\Omega}$ to denote the $L^2$-inner product and norm (respectively), regardless of whether the functions are scalar-valued, vector-valued, etc.  Similarly, we will use the notation $\langle \cdot, \cdot \rangle_\Gamma$ to denote the duality pairing of $H^{-{\frac 12}}(\Gamma)$ and $H^{{\frac 12}}(\Gamma)$. 

\noindent 
{\bf Control space.} 
We now define the set of {\it admissible controls} for our optimal control problem \eqref{eq:ocp}: 
\begin{align}
\label{eq:Z}
  Z:=&  \bigl\{\z\in \HcurlS:    \curlS \z\in L_0^2(\Gamma) \bigr\}, 
\end{align}
 endowed  with the norm on $\HcurlS,$ given by 
	\begin{equation}\label{eq:curlS}
		 \| \z \|_{\curlS} := \|\z\|_{{\bm L}^2(\Gamma)} + \| \curlS \z \|_{L^2(\Gamma)} . 
	\end{equation}
	
 We further emphasize that the norm definition in \eqref{eq:curlS} has motivated the control regularization in \eqref{eq:obFun}. In Subsection~\ref{s:reg}, we establish that the lower term in regularization can be dropped. Furthermore, in \eqref{eq:Z}, $L^2_0(\Gamma)$ is the space of $L^2$-functions with zero mean. The zero mean condition is a natural restriction for the problem,  related to the identity $\div \curl \u=0$ for $\u\in \Hcurl$; see for instance, \cite[Corollary 5.4]{BCS2002}. This condition also appears naturally in the finite element method setting, cf. \cite[Sec. 2]{MR3522965}.   Whenever we write $a \lesssim b$ in what follows, we mean that $ a \leq Cb$, where $C$ is a positive non-essential constant and its value might change at each occurrence. 
 
 \subsection{Tangential traces and Green's identities for $\Hcurl$}
 \label{ss:state}
We begin this section by defining the {\it tangential traces} of functions in $\mathbf H^1(\Omega)$, since from the boundary condition in  (\ref{eq:state}),  it is clear that these are the traces that are being imposed by the control.  The material in this subsection is known and we refer the interested reader to \cite{buffa:I} and \cite[Chapter 16]{MR3930592} for more details.  Recall that we are considering $\Omega$ to be a polyhedron, therefore the outer unit normal $\n$ is well-defined almost everywhere on $\Gamma,$  and  along each edge of $\Omega$ one of the tangential components is continuous.
\begin{definition}
\label{def:gtgT}
 The tangential traces of $\v$, defined from $\H^1(\Omega)$  onto $H_{\!\bm -}^{{\frac 12}}(\Gamma)$, are given by
    \[
    	\gamma_t \v :=\gamma\v \times \n,\qquad \text{and} \qquad 
    		\gamma_T \v :=\n\times \left(\gamma\v \times \n\right).
    \]
   where $\gamma$ denotes the standard restriction of $\v$ on $\Gamma$ in the trace sense.
\end{definition} 
 We now define the Hilbert spaces $ H_{\perp}^{\frac 12}(\Gamma)$ and $ H_{||}^{\frac 12}(\Gamma)$, see \cite[Prop. 2.6]{buffa:I}, as the image of the maps $\gamma_t$ and $\gamma_T$ restricted to $\H^1(\Omega)$. Moreover, we have the following result 
\begin{lemma} 
\label{lem:TrH1}
The following maps are linear, continuous and surjective
\begin{align*}
 \gamma_t:\H^1(\Omega)\mapsto H_{\perp}^{\frac 12}(\Gamma), ~ \mbox{ and }
 \gamma_T:\H^1(\Omega)\mapsto H_{||}^{\frac 12}(\Gamma).
\end{align*}
\end{lemma}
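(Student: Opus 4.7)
The plan is to leverage the standard trace theorem for $\gamma:\H^1(\Omega)\to \H^{1/2}(\Gamma)$ and then observe that cross product with $\n$ is a well-behaved linear operation on a polyhedral boundary, together with the fact that the target spaces are defined precisely as the ranges of these tangential trace operators.

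\textbf{Step 1: Linearity.} The linearity of both maps is immediate from the linearity of the restriction $\gamma$ and the bilinearity of the cross product: for $\v,\w\in\H^1(\Omega)$ and scalars $a,b\in\C$,
\[
\gamma_t(a\v+b\w)=(a\gamma\v+b\gamma\w)\times\n=a\,\gamma_t\v+b\,\gamma_t\w,
\]
and similarly for $\gamma_T$.

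\textbf{Step 2: Continuity into $H_{\!\bm -}^{1/2}(\Gamma)$.} By the classical trace theorem, $\gamma:\H^1(\Omega)\to \H^{1/2}(\Gamma)$ is continuous. Writing $\Gamma=\bigcup\overline{\Gamma}_i$ with $\Gamma_i$ the open faces, I would restrict to each face: on $\Gamma_i$ the outward normal $\n=\n_i$ is a constant vector, so the operations $\bm\lambda\mapsto \bm\lambda\times\n_i$ and $\bm\lambda\mapsto\n_i\times(\bm\lambda\times\n_i)$ are bounded $\C$-linear maps $\H^{1/2}(\Gamma_i)\to\H^{1/2}(\Gamma_i)$ (they amount to pointwise multiplication by constant matrices). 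Summing the face-wise bounds,
\[
\|\gamma_t\v\|_{H_{\!\bm -}^{1/2}(\Gamma)}^2=\sum_i\|\gamma\v|_{\Gamma_i}\times\n_i\|_{\H^{1/2}(\Gamma_i)}^2\lesssim\sum_i\|\gamma\v\|_{\H^{1/2}(\Gamma_i)}^2\lesssim\|\v\|_{\H^1(\Omega)}^2,
\]
and analogously for $\gamma_T$. One also checks that $\gamma_t\v\cdot\n=0$ and $\gamma_T\v\cdot\n=0$ a.e.\ on $\Gamma$ face by face, so the images indeed lie in $\Lt\cap H_{\!\bm -}^{1/2}(\Gamma)$.

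\textbf{Step 3: Surjectivity and the target norms.} The spaces $H_\perp^{1/2}(\Gamma)$ and $H_\parallel^{1/2}(\Gamma)$ are, by their defining property cited from \cite[Prop.~2.6]{buffa:I}, exactly the ranges of $\gamma_t$ and $\gamma_T$ on $\H^1(\Omega)$. Endowing them with the quotient (image) norm
\[
\|\bm\lambda\|_{H_\perp^{1/2}(\Gamma)}:=\inf\bigl\{\|\v\|_{\H^1(\Omega)}:\v\in\H^1(\Omega),\ \gamma_t\v=\bm\lambda\bigr\},
\]
and similarly for $H_\parallel^{1/2}(\Gamma)$, surjectivity is automatic and continuity is preserved. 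The open mapping theorem then guarantees that these quotient norms are equivalent to the intrinsic Hilbert-space norms on the two target spaces.

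\textbf{Main obstacle.} The one subtle point is that on a Lipschitz polyhedron $\n$ is only in $L^\infty(\Gamma)$ and jumps across edges, so one cannot expect $\gamma_t\v$ to lie in $\H^{1/2}(\Gamma)$ globally. This is precisely why $H_{\!\bm -}^{1/2}(\Gamma)$ is defined face by face in \eqref{def:Spaces}: on each face $\n$ is constant, restoring the $H^{1/2}$-boundedness of the multiplication by $\n$. Once this face-wise definition is in hand, the argument above is routine.
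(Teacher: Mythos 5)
The paper's own ``proof'' is a one-line citation to \cite[Proposition 2.7]{buffa:I}, so the substantive comparison is between your argument and what that proposition actually establishes. Your Steps 1 and 2 are fine as far as they go: linearity is immediate, and the face-wise estimate correctly shows that $\gamma_t$ and $\gamma_T$ map $\H^1(\Omega)$ continuously into $H_{\!\bm -}^{{\frac 12}}(\Gamma)\cap\Lt$, precisely because $\n$ is constant on each open face $\Gamma_i$ so that multiplication by $\n$ is bounded on $\H^{\frac 12}(\Gamma_i)$.

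The gap is in Step 3. In \cite{buffa:I} the spaces $H_{\perp}^{\frac 12}(\Gamma)$ and $H_{||}^{\frac 12}(\Gamma)$ are \emph{not} defined as the images of $\gamma_t$ and $\gamma_T$; they are defined intrinsically as the subspaces of $H_{\!\bm -}^{{\frac 12}}(\Gamma)$ whose elements satisfy weak normal (respectively tangential) continuity conditions across the edges shared by adjacent faces, and their norms carry additional integral terms measuring that edge compatibility. (The paper's phrasing ``defined as the image of the maps'' is loose shorthand for the characterization that Proposition 2.7 \emph{proves}.) Under the intrinsic definition, your face-wise bound does not control the edge-compatibility part of the norm, so continuity into the target space is not yet established; and surjectivity --- the genuinely nontrivial content, namely that every field satisfying the edge conditions admits an $\H^1(\Omega)$ lifting --- is not addressed at all but rather assumed away by redefining the codomain. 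The appeal to the open mapping theorem is then circular: to conclude that the quotient norm is equivalent to the intrinsic one you must already know that the map is continuous and surjective with respect to the intrinsic norm, which is exactly the statement of the lemma. If instead you insist on the quotient-norm reading, the lemma becomes a tautology and your proof proves nothing beyond Step 2. Either way, the essential ingredient --- the lifting of edge-compatible face-wise $H^{\frac 12}$ data to $\H^1(\Omega)$ --- is missing, and that is the content one is importing from \cite{buffa:I}.
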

\begin{proof}
 See \cite[Proposition 2.7]{buffa:I}.
\end{proof}

 \begin{definition} 
 The spaces 
    \begin{align}
     (H_{\perp}^{-\frac 12}(\Gamma),\| \cdot\|_{\perp,-\frac 12,\Gamma}),~ \mbox{ and }~
     (H_{||}^{-\frac 12}(\Gamma), \| \cdot\|_{||,-\frac 12,\Gamma})
    \end{align}
   are the dual spaces of $ H_{\perp}^{\frac 12}(\Gamma)$ and $ H_{||}^{\frac 12}(\Gamma)$ (endowed with dual norms), respectively. 
    In this case, $\Lt$ is taken as the pivot space.
 \end{definition}
 
However, Lemma~\ref{lem:TrH1} is not directly applicable in our setting. Indeed, the correct function space for \eqref{eq:state} is  $\Hcurl$ and 
 not $\H^1(\Omega)$. Notice that $\Hcurl$ is less regular than $\H^1(\Omega)$, but the dual space  of its trace space is more delicate than $\bm H^{-\frac{1}{2}}(\Gamma).$  

To define weaker versions of $\gamma_t$ and $\gamma_T$, we first note that for $\v \in \boldsymbol{\mathcal C}^\infty(\overline{\Omega})$, the image of the maps $\gamma_t \v$ and $\gamma_T \v$ belong to $H_{\!\bm -}^{{\frac 12}}(\Gamma)$, according to Definition~\ref{def:gtgT}. Since no restrictions are imposed on the normal component of $\gamma\u$,  those maps can be extended by density to elements of $\Hcurl$.

In order to obtain a Green's identity where both functions are in $\Hcurl$, we need to properly define the ranges of $\gamma_t$ and $\gamma_T$ acting on $\Hcurl$.  We again refer to \cite{buffa:I} and \cite[Ch. 16]{MR3930592} as well as \cite[p. 58]{MR2059447} for more details.  Following the notation of \cite{buffa:I}, we define 
\begin{align*}
	H_{||}^{-{\frac 12}}(\divS;\Gamma) :=&~ 
	\left\{ \bm \lambda \in H_{||}^{-{\frac 12}}(\Gamma):\, \divS \bm \lambda \in H^{-\frac 12}(\Gamma) \right\},\\
	H_{\perp}^{-{\frac 12}}(\curlS;\Gamma) :=&~ 
	\left\{ \bm \lambda \in H_{\perp}^{-{\frac 12}}(\Gamma):\, \curlS \bm \lambda \in H^{-\frac 12}(\Gamma) \right\},
\end{align*}
endowed with the norms 
\begin{align}
\begin{aligned}
     \| \bm \lambda\|_{H_{||}^{-{\frac 12}}(\divS;\Gamma)}:=&~ 
      \| \bm \lambda\|_{||,-\frac 12,\Gamma}+ \|\divS \bm \lambda\|_{-\frac 12,\Gamma},\\
     \| \bm \lambda\|_{H_{\perp}^{-{\frac 12}}(\curlS;\Gamma)}:=&~ 
      \| \bm \lambda\|_{\perp,-\frac 12,\Gamma}+ \|\curlS \bm \lambda\|_{-\frac 12,\Gamma}.
\end{aligned}
     \label{def:normTraceHcurl}
\end{align}
Moreover, we have  $H_\perp^{-{\frac 12}}(\curlS;\Gamma) := (H_{||}^{-{\frac 12}}(\mathrm{div}_\Gamma;\Gamma))^*$, where $\Lt$ is used as  pivot, and 
the following holds:
\begin{lemma}\label{lemma:surjTanTrace}
 The maps,
 \begin{align*}
 \begin{aligned}
  \gamma_t: \Hcurl\mapsto H_{||}^{-{\frac 12}}(\divS;\Gamma),  \mbox{ and }
  \gamma_T: \Hcurl\mapsto H_{\perp}^{-{\frac 12}}(\curlS;\Gamma)
 \end{aligned}
 \end{align*}
are linear, continuous and surjective. 
\end{lemma}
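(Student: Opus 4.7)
The plan is to verify linearity, continuity, and surjectivity for $\gamma_t$; the corresponding properties for $\gamma_T$ follow from a symmetric argument. Linearity is immediate from Definition~\ref{def:gtgT}, so I would focus on continuity and surjectivity, exploiting the density of $\boldsymbol{\mathcal C}^\infty(\overline\Omega)$ in $\Hcurl$ throughout.

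For continuity, I would begin with the classical Green's identity
$$(\curl\v,\w)_{0,\Omega} - (\v,\curl\w)_{0,\Omega} = \int_\Gamma(\gamma_t\v)\cdot\gamma_T\w\,ds,$$
valid for $\v,\w\in\boldsymbol{\mathcal C}^\infty(\overline\Omega)$. Given $\bm\eta\in H_{||}^{1/2}(\Gamma)$, Lemma~\ref{lem:TrH1} furnishes a lifting $\w\in\H^1(\Omega)$ with $\gamma_T\w=\bm\eta$ and $\|\w\|_{1,\Omega}\lesssim\|\bm\eta\|_{||,1/2,\Gamma}$; the right-hand side is then bounded by $\|\v\|_{\Hcurl}\|\bm\eta\|_{||,1/2,\Gamma}$, so $\gamma_t\v\in H_{||}^{-1/2}(\Gamma)$ with the expected estimate. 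To control $\divS\gamma_t\v$, I would test against $\varphi\in H^{1/2}(\Gamma)$ lifted to $\Phi\in H^1(\Omega)$: Green's identity applied to $\v$ and $\nabla\Phi$, combined with $\curl\nabla\Phi=0$, the fact that $\gamma_T(\nabla\Phi)=\nabla_\Gamma\varphi$, and surface integration by parts, yields
$$\langle\divS\gamma_t\v,\varphi\rangle_\Gamma = -(\curl\v,\nabla\Phi)_{0,\Omega},$$
whence $\|\divS\gamma_t\v\|_{-1/2,\Gamma}\lesssim\|\curl\v\|_{0,\Omega}$. Extending by density then gives continuity of $\gamma_t:\Hcurl\to H_{||}^{-1/2}(\divS;\Gamma)$.

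For surjectivity, I would exhibit a continuous right inverse by solving an auxiliary problem. Given $\bm\lambda\in H_{||}^{-1/2}(\divS;\Gamma)$, Lax--Milgram applied to the sesquilinear form $a(\v,\w):=(\curl\v,\curl\w)_{0,\Omega}+(\v,\w)_{0,\Omega}$ on $\Hcurl$ produces a unique $\v$ satisfying
$$a(\v,\w) = \langle\bm\lambda,\gamma_T\w\rangle_\Gamma\qquad\forall\,\w\in\Hcurl,$$
where the right-hand side makes sense because the continuity of $\gamma_T$ has been established in parallel with that of $\gamma_t$. Testing with $\w\in\boldsymbol{\mathcal C}^\infty_c(\Omega)$ gives $\curl(\curl\v)+\v=0$ in $\Omega$, so $\curl\v\in\Hcurl$, and a second application of Green's identity then matches the distributional identity $\gamma_t(-\curl\v)=\bm\lambda$ in $H_{||}^{-1/2}(\divS;\Gamma)$. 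Thus $-\curl\v$ is the desired preimage of $\bm\lambda$.

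The main obstacle is that the identification $\gamma_t(-\curl\v)=\bm\lambda$ in the surjectivity step presupposes that the set $\{\gamma_T\w:\w\in\Hcurl\}$ separates $H_{||}^{-1/2}(\divS;\Gamma)$, which is itself equivalent to the surjectivity of $\gamma_T$. Consequently, the two surjectivity claims must be bootstrapped together rather than proved sequentially; this simultaneous treatment, together with the careful handling of $\divS$ and $\curlS$ across edges and vertices of the polyhedral surface, is carried out in \cite{buffa:I}, to which I would ultimately defer for the technical details.
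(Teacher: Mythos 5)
The paper's ``proof'' of this lemma is a one-line citation to \cite[Thm.~5.4]{buffa:II}, so there is no argument in the paper to compare against step by step; what you have written is essentially a sketch of the standard proof that underlies that citation. Your continuity argument (lifting $\bm\eta\in H_{||}^{1/2}(\Gamma)$ via Lemma~\ref{lem:TrH1}, then controlling $\divS\gamma_t\v$ by testing against gradients $\nabla\Phi$ and using $\curl\nabla\Phi=0$ together with surface integration by parts) is correct and is how the density extension is done in \cite{buffa:I,buffa:II}. For surjectivity, the one point to be precise about is that well-posedness of the right-hand side $\langle\bm\lambda,\gamma_T\w\rangle_\Gamma$ for $\w\in\Hcurl$ requires more than the continuity of $\gamma_T$ into $H_\perp^{-1/2}(\curlS;\Gamma)$: it requires the duality identification $H_\perp^{-1/2}(\curlS;\Gamma)=\bigl(H_{||}^{-1/2}(\divS;\Gamma)\bigr)^*$ with $\Lt$ as pivot, which on a Lipschitz polyhedron is itself the hard theorem (proved via Hodge decompositions of the trace spaces in \cite{buffa:I,buffa:II}) rather than a consequence of continuity. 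You correctly identify this as the obstacle and note that the two surjectivity claims must be bootstrapped together, ultimately deferring to the same source the paper cites; with that caveat made explicit, your sketch is sound and is in fact more informative than the paper's bare citation.
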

\begin{proof}
 See \cite[Thm. 5.4]{buffa:II}.
\end{proof}
With these definitions we have the following Green's identity:
\begin{theorem}[{\cite[Thm. 3.31]{MR2059447}}]
  The space $H_{||}^{-{\frac 12}}(\mathrm{div}_\Gamma;\Gamma)$ is a Hilbert space. The continuous linear mappings $\gamma_t$: $\Hcurl \to H_{||}^{-{\frac 12}}(\mathrm{div}_\Gamma;\Gamma)$  and $\gamma_T$ :  $\Hcurl \to H_\perp^{-{\frac 12}}(\curlS;\Gamma)$ are surjective, and for all $\v, \bm \phi \in \Hcurl$ 
\begin{align}
(\v,\nabla \times \bm \phi)_{0,\Omega} - (\nabla \times \v, \bm \phi)_{0,\Omega} = \langle \gamma_t \v, \gamma_T\bm \phi\rangle_{\Gamma^*},  \label{eqn:GreenCurl}
\end{align}
where $\langle \cdot, \cdot \rangle_{\Gamma^*}$ is the duality pairing between $H_{||}^{-{\frac 12}}(\mathrm{div}_\Gamma;\Gamma)$ and $H_\perp^{-{\frac 12}}(\curlS;\Gamma)$. 
\end{theorem}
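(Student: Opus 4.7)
The result bundles three claims: (i) $H_{||}^{-\frac 12}(\divS;\Gamma)$ is Hilbert; (ii) $\gamma_t$ and $\gamma_T$ are continuous and surjective onto the stated div/curl subspaces; (iii) the Green's identity \eqref{eqn:GreenCurl} holds on $\Hcurl\times\Hcurl$. My plan is to settle (i) abstractly, establish (iii) first on smooth fields, then use (iii) to bootstrap the missing part of (ii) — namely, that $\gamma_t$ and $\gamma_T$ actually land in the refined div/curl subspaces rather than merely in $H_{||}^{-\frac 12}(\Gamma)$ and $H_\perp^{-\frac 12}(\Gamma)$. Surjectivity onto those refined subspaces is the final step.

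For (i), the graph norm in \eqref{def:normTraceHcurl} exhibits $H_{||}^{-\frac 12}(\divS;\Gamma)$ as the preimage of the Hilbert space $H^{-\frac 12}(\Gamma)$ under the bounded distributional operator $\divS$ acting on the Hilbert space $H_{||}^{-\frac 12}(\Gamma)$, so completeness is automatic; the same argument applies to $H_\perp^{-\frac 12}(\curlS;\Gamma)$. For the smooth version of (iii), I would start from the vector identity
\[
\div(\v\times\bm\phi) \;=\; (\curl\v)\cdot\bm\phi-\v\cdot(\curl\bm\phi),
\]
valid for $\v,\bm\phi\in\boldsymbol{\mathcal C}^{\infty}(\overline{\Omega})$ because both sides are bilinear in $(\v,\bm\phi)$ (no conjugation enters). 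Integrating over $\Omega$, applying the divergence theorem, and using the cyclic permutation $(\v\times\bm\phi)\cdot\n=-(\gamma_T\bm\phi)\cdot(\gamma_t\v)$, valid since $\gamma_t\v$ is tangential on $\Gamma$, reproduces \eqref{eqn:GreenCurl} with the duality realised as an $L^{2}(\Gamma)$ integral.

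To promote (iii) to $\Hcurl\times\Hcurl$ and to complete (ii), I would use the classical density of $\boldsymbol{\mathcal C}^{\infty}(\overline{\Omega})$ in $\Hcurl$ on a Lipschitz polyhedron, from \cite{buffa:I}. Fix $\v\in\Hcurl$. Given $\bm\mu\in H_{||}^{\frac 12}(\Gamma)$, Lemma~\ref{lem:TrH1} together with the open mapping theorem produces $\bm\phi\in \H^{1}(\Omega)$ with $\gamma_T\bm\phi=\bm\mu$ and $\|\bm\phi\|_{1,\Omega}\lesssim\|\bm\mu\|_{H_{||}^{\frac 12}(\Gamma)}$; the smooth identity applied to approximants of $\v$ then yields
\[
|\langle\gamma_t\v,\bm\mu\rangle_{\Gamma}| \;\lesssim\; \|\v\|_{\Hcurl}\|\bm\phi\|_{1,\Omega} \;\lesssim\; \|\v\|_{\Hcurl}\|\bm\mu\|_{H_{||}^{\frac 12}(\Gamma)},
\]
so $\gamma_t$ is continuous into $H_{||}^{-\frac 12}(\Gamma)$. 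To extract the $\divS$ component, I would specialize to $\bm\phi=\nabla\tilde\xi$ for $\tilde\xi\in H^{1}(\Omega)$ an extension of $\xi\in H^{\frac 12}(\Gamma)$: then $\curl\bm\phi=0$ and $\gamma_T\bm\phi=\nabla_\Gamma\xi$, and the identity collapses to $\divS(\gamma_t\v)=\gamma_{\n}(\curl\v)$ in $H^{-\frac 12}(\Gamma)$, well defined because $\curl\v\in\Hdiv$ with $\div(\curl\v)=0$. This promotes $\gamma_t$ to a bounded map into $H_{||}^{-\frac 12}(\divS;\Gamma)$; the argument for $\gamma_T$ is symmetric, and (iii) passes to the limit on all of $\Hcurl\times\Hcurl$.

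The main obstacle is surjectivity of $\gamma_t$ onto the refined space $H_{||}^{-\frac 12}(\divS;\Gamma)$. Given $\bm\lambda\in H_{||}^{-\frac 12}(\divS;\Gamma)$, I would produce a preimage via a coercive Lax--Milgram problem: find $\v\in\Hcurl$ with
\begin{align*}
(\curl\v,\curl\bm\psi)_{0,\Omega}+(\v,\bm\psi)_{0,\Omega} \;=\; -\langle\bm\lambda,\gamma_T\bm\psi\rangle_{\Gamma^{*}} \qquad \forall\,\bm\psi\in\Hcurl,
\end{align*}
and then verify $\gamma_t\v=\bm\lambda$ by inserting back into the already-proved Green's identity. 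A symmetric construction handles $\gamma_T$, and combining surjectivity with the smooth pairing and continuity identifies $H_\perp^{-\frac 12}(\curlS;\Gamma)\simeq \bigl(H_{||}^{-\frac 12}(\divS;\Gamma)\bigr)^{*}$. The delicate technical ingredients — smooth density in $\Hcurl$ on a Lipschitz polyhedron and the face-by-face definitions of $\divS,\curlS$ with the associated integration by parts — are supplied by \cite{buffa:I,buffa:II}.
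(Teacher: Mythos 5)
The paper does not prove this statement at all: it is quoted verbatim from the literature, with the citation \cite[Thm. 3.31]{MR2059447} placed in the theorem header (the underlying results are those of Buffa--Ciarlet and Buffa--Costabel--Sheen, \cite{buffa:I,buffa:II}). So there is no in-paper argument to compare against; what can be assessed is whether your sketch would actually reconstruct the cited proof. Your treatment of (i), the smooth Green's identity, and the extraction of $\divS(\gamma_t\v)=\gamma_{\n}(\curl\v)$ from the test fields $\bm\phi=\nabla\tilde\xi$ is sound and is indeed how continuity of $\gamma_t$ into $H_{||}^{-\frac 12}(\divS;\Gamma)$ is obtained.

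The genuine gap is in the surjectivity/duality step, which is precisely the hard part of this theorem on Lipschitz polyhedra. Your Lax--Milgram right-hand side $\bm\psi\mapsto-\langle\bm\lambda,\gamma_T\bm\psi\rangle_{\Gamma^*}$ is only a bounded functional on $\Hcurl$ if the pairing between $H_{||}^{-\frac 12}(\divS;\Gamma)$ and $H_\perp^{-\frac 12}(\curlS;\Gamma)$ --- two spaces both of \emph{negative} order, so this is not an $L^2$ or $H^{-\frac12}\times H^{\frac12}$ pairing --- has already been shown to be well defined and bounded. That is exactly the duality statement $H_\perp^{-\frac 12}(\curlS;\Gamma)\simeq\bigl(H_{||}^{-\frac 12}(\divS;\Gamma)\bigr)^{*}$ you defer to the end, so the argument as ordered is circular. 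Your density argument defines $\langle\gamma_t\v,\gamma_T\bm\phi\rangle$ only as a limit of volume integrals for $\v,\bm\phi\in\Hcurl$; it does not show that this limit depends only on the pair of traces (which needs the characterization $\ker\gamma_t=\Hzcurl$), nor that the resulting pairing is non-degenerate. In \cite{buffa:II} this is resolved by a Hodge-type decomposition of the tangential trace spaces, and the duality is constructed componentwise before surjectivity is addressed; that ingredient is absent from your sketch. A secondary slip: unwinding your variational problem with the Green's identity gives $\langle\gamma_t(\curl\v),\gamma_T\bm\psi\rangle_{\Gamma^*}=-\langle\bm\lambda,\gamma_T\bm\psi\rangle_{\Gamma^*}$, so the candidate preimage is $-\curl\v$ (legitimate, since $\curl\curl\v=-\v\in\L$ puts $\curl\v\in\Hcurl$), not $\v$ itself --- and even then, concluding $\gamma_t(\curl\v)=-\bm\lambda$ from testing against $\gamma_T\bm\psi$ again requires the non-degeneracy of the pairing. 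Given that the paper treats this as a black-box citation, the appropriate fix is either to cite \cite{buffa:II} for the duality and kernel characterization before running your Lax--Milgram step, or to import the trace-space Hodge decomposition explicitly.
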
 
Now, given $\g \in H_{||}^{-{\frac 12}}(\mathrm{div}_\Gamma;\Gamma)$ we define 
\begin{align*}
H_{\bm g}(\curl;\Omega):=&  \left\{\v \in \Hcurl\colon\, \gamma_t\v=\g \mbox{ on } \Gamma \right\}.
\end{align*}
%

\subsection{Well-posedness of the state equation}

First, let us introduce the {\it sesquilinear form}\newline  ${a:\Hcurl\times \Hcurl \to \C}$ given by 
\begin{equation}
 a(\u,\v):= \int_\Omega {\bm \mu}^{-1} \curl \u \cdot \curl{\barv}\,d\x+(\i\omega)\int_\Omega {\bm \kappa} \u \cdot \barv\, d\x. \label{def:a}
\end{equation}
The following lemmas show the well-posedness of the state equation (\ref{eq:state}).

\begin{lemma}[] \label{lemma:bijection_a}
Given $\f\in \Hcurl^*$, the problem, find $\u\in \Hcurl$ such that 
\begin{align}
     a(\u,\v)= \langle\f,\v \rangle\qquad \forall \v\in\Hcurl,\label{def:AbstStateEq}
\end{align}
 is a well-posed problem  in the sense of {\it Hadamard}, where $\langle\cdot,\cdot \rangle$ denotes the duality paring between $\Hcurl$ and its dual. 
\end{lemma}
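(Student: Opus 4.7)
The plan is to apply the complex version of the Lax--Milgram theorem to the sesquilinear form $a$ on $\Hcurl \times \Hcurl$. Since $\bm\mu$ and $\bm\kappa$ are symmetric, positive definite and in $L^{\infty}(\Omega;\R^{3\times 3})$, there exist constants $\mu_*,\mu^*,\kappa_*,\kappa^* > 0$ such that
\[
\mu_*|\bm\xi|^2 \leq \bm\mu^{-1}\bm\xi\cdot\bm\xi \leq \mu^*|\bm\xi|^2,\qquad \kappa_*|\bm\xi|^2 \leq \bm\kappa\bm\xi\cdot\bm\xi \leq \kappa^*|\bm\xi|^2,
\]
for almost every $\x\in\Omega$. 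Continuity of $a$ is then immediate from Cauchy--Schwarz in $\L$:
\[
|a(\u,\v)| \leq \mu^*\,\|\curl\u\|_{0,\Omega}\|\curl\v\|_{0,\Omega} + |\omega|\kappa^*\,\|\u\|_{0,\Omega}\|\v\|_{0,\Omega} \lesssim \|\u\|_{\Hcurl}\|\v\|_{\Hcurl}.
\]

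The key step is coercivity in the complex (sesquilinear) sense, namely $|a(\u,\u)| \gtrsim \|\u\|_{\Hcurl}^2$. Because $\bm\mu$ and $\bm\kappa$ are real-valued and $\omega\in\R\setminus\{0\}$, splitting into real and imaginary parts yields
\[
\Re a(\u,\u) = \int_\Omega \bm\mu^{-1}\curl\u\cdot\overline{\curl\u}\,d\x \geq \mu_*\|\curl\u\|_{0,\Omega}^2,
\]
\[
\Im a(\u,\u) = \omega\int_\Omega \bm\kappa\u\cdot\baru\,d\x,\qquad |\Im a(\u,\u)| \geq |\omega|\kappa_*\|\u\|_{0,\Omega}^2.
\]
Using $|z| \geq \tfrac{1}{2}(|\Re z| + |\Im z|)$ for any $z\in\C$, I would then conclude
\[
|a(\u,\u)| \geq \tfrac{1}{2}\bigl(\mu_*\|\curl\u\|_{0,\Omega}^2 + |\omega|\kappa_*\|\u\|_{0,\Omega}^2\bigr) \geq C_\omega\|\u\|_{\Hcurl}^2,
\]
with $C_\omega := \tfrac12\min\{\mu_*,|\omega|\kappa_*\} > 0$.

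Having continuity and sesquilinear coercivity in hand, the complex Lax--Milgram theorem (applied on the Hilbert space $\Hcurl$ over $\C$, with $\f$ interpreted as a conjugate-linear functional as noted in Section~\ref{s:not}) delivers a unique $\u\in\Hcurl$ solving \eqref{def:AbstStateEq} together with the a priori bound $\|\u\|_{\Hcurl} \leq C_\omega^{-1}\|\f\|_{\Hcurl^*}$, which gives well-posedness in the Hadamard sense. I do not expect any serious obstacle here: the only subtlety is the bookkeeping between linear and conjugate-linear dualities that the authors flagged earlier, which is resolved simply by reading $\langle\f,\v\rangle$ as conjugate-linear in $\v$ to be consistent with the second argument of $a$.
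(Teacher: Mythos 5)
Your proposal is correct and follows essentially the same route as the paper: the paper's proof simply asserts that the hypotheses on $\bm\mu$ and $\bm\kappa$ give boundedness and coercivity of the sesquilinear form, records the bound $c(\bm\kappa,\bm\mu;\omega)\|\u\|_{\curl,\Omega}^2 \leq |a(\u,\u)|$ with a constant depending on $\omega$ and the eigenvalues of the coefficients, and invokes Lax--Milgram. Your real/imaginary-part splitting is exactly the computation hiding behind that assertion, so you have merely supplied the details the authors omitted.
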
    
\begin{proof}
     The hypothesis on ${\bm \mu}$ and ${\bm \kappa}$ guarantees that $a(\cdot,\cdot)$ defines a sesquilinear, bounded  and coercive form.  The Lax-Milgram lemma implies that (\ref{def:AbstStateEq})  has a unique solution; moreover, we have
     \begin{align}
    c({\bm \kappa}, {\bm \mu};\omega)\|\u \|^2_{\curl,\Omega}\leq  |a(\u,\u)|  \lesssim |\langle\f,\u\rangle |,
     \end{align}
where $c$ is a positive constant that depends on $\omega$ and the eigenvalues of ${\bm \mu}$ and ${\bm \kappa}$.
\end{proof}

\begin{lemma}[]\label{lemma:state_sol} Given $\z \in \HcurlS$ and $\bm j_c\in \L$, the problem, find $\u \in H_{\z\times \n}(\curl;\Omega)$ such that  
\begin{align}\label{eq:state_sol}
a(\u,\v) = \int_\Omega \jc \cdot \overline{\v}\,d\x  \qquad \forall \v \in \Hzcurl,
\end{align}
 is well-posed. 
\end{lemma}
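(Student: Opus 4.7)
The plan is to reduce the inhomogeneous boundary value problem to a homogeneous one via a lifting argument, and then appeal to Lemma~\ref{lemma:bijection_a} (whose proof only uses coercivity and boundedness, both inherited on the closed subspace $\Hzcurl$).

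First, I would verify that $\z\times\n$ belongs to the correct trace space so that Lemma~\ref{lemma:surjTanTrace} applies. Since $\z\in\Lt$, the field $\z\times\n$ lies in $\Lt\hookrightarrow H_{||}^{-1/2}(\Gamma)$, and the surface identity $\divS(\z\times\n)=\curlS\z$ gives $\divS(\z\times\n)\in L^2(\Gamma)\hookrightarrow H^{-1/2}(\Gamma)$. Thus the map $\z\mapsto \z\times\n$ is continuous from $\HcurlS$ into $H_{||}^{-{\frac 12}}(\divS;\Gamma)$.

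Next, by the surjectivity part of Lemma~\ref{lemma:surjTanTrace} together with the open mapping theorem, there exists a bounded right inverse of $\gamma_t:\Hcurl\to H_{||}^{-{\frac 12}}(\divS;\Gamma)$. Applying it to $\z\times\n$ produces a lifting $\uz\in\Hcurl$ with $\gamma_t\uz=\z\times\n$ and
\begin{equation*}
\|\uz\|_{\curl,\Omega}\lesssim \|\z\times\n\|_{H_{||}^{-{\frac 12}}(\divS;\Gamma)}\lesssim \|\z\|_{\curlS}.
\end{equation*}
Writing $\u=\u_0+\uz$, the problem \eqref{eq:state_sol} becomes: find $\u_0\in\Hzcurl$ with
\begin{equation*}
a(\u_0,\v)=\int_\Omega \jc\cdot\overline{\v}\,d\x-a(\uz,\v)\qquad\forall\v\in\Hzcurl.
\end{equation*}
The right-hand side defines a bounded conjugate-linear functional on $\Hzcurl$, with norm controlled by $\|\jc\|_{0,\Omega}+\|\uz\|_{\curl,\Omega}$. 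Since the sesquilinear form $a$ is bounded and coercive on all of $\Hcurl$, these properties transfer to the closed subspace $\Hzcurl$, so the Lax–Milgram lemma (exactly as in Lemma~\ref{lemma:bijection_a}) yields a unique $\u_0$.

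Existence and uniqueness of $\u=\u_0+\uz$ follow; uniqueness uses that any two solutions differ by an element of $\Hzcurl$ with vanishing right-hand side, which must be zero by coercivity. The stability bound
\begin{equation*}
\|\u\|_{\curl,\Omega}\lesssim \|\jc\|_{0,\Omega}+\|\z\|_{\curlS}
\end{equation*}
follows by combining the coercivity estimate for $\u_0$ with the lifting bound for $\uz$. The main technical point is the identity $\divS(\z\times\n)=\curlS\z$ and the accompanying verification that the trace data lie in $H_{||}^{-{\frac 12}}(\divS;\Gamma)$ with continuous dependence on $\|\z\|_{\curlS}$; everything else is a routine lifting plus Lax–Milgram argument.
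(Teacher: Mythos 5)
Your proposal is correct and follows essentially the same route as the paper: lift the boundary datum via the surjectivity of $\gamma_t$, then apply Lax--Milgram on $\Hzcurl$ and combine the two bounds. The only (minor) difference is in justifying that $\z\times\n\in H_{||}^{-{\frac 12}}(\divS;\Gamma)$: you compute $\divS(\z\times\n)=\curlS\z$ directly, while the paper invokes the isometry of the weak rotation between $H_\perp^{-{\frac 12}}(\curlS;\Gamma)$ and $H_{||}^{-{\frac 12}}(\divS;\Gamma)$ --- the same fact, but on a Lipschitz polyhedron (where $\n$ jumps across edges) your facewise identity should really be backed by the cited weak-rotation results rather than asserted pointwise.
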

\begin{proof}
Let $\z \in \HcurlS$ be given. Then $\z\times \n \in \gamma_t \Hcurl$, which follows from the much more general results on weak rotations,  see \cite[Prop. 16.16 and Eq. (16.42)]{MR3930592}. Therefore fom Lemma \ref{lemma:surjTanTrace}, there exists $\u_{\z}\in \Hcurl$ such that $\gamma_t \u_{\z}=\z\times \n$ and
\begin{align}
         \|\u_{\z} \|_{\curl,\Omega} \lesssim 
           \|\z\times \n\|_{{H^{-{\frac 12}}_{||}(\divS;\Gamma)}}\lesssim \|\z\|_{\curlS} , \label{eqn:contLifting}
\end{align}
where the last inequality follows because of the isometry 
$ \|\z\times \n\|_{{H^{-{\frac 12}}_{||}(\divS;\Gamma)}} = \|\z\|_{H_\perp^{-{\frac 12}}(\curlS;\Gamma)}$,
see \cite[p. 448]{MR3930592}, 
and 
$$
\|\z\|_{H_\perp^{-{\frac 12}}(\curlS;\Gamma)} 
\le C \|\z\|_{\curlS},
$$
which follows from the compact embedding of $L^2(\Gamma)$ into $H^{-\frac 12}(\Gamma)$. 

    Now, we look for $\u_0\in \Hzcurl$ such that 
    \begin{align}
        a(\u_0,\v)= \int_\Omega \jc \cdot \overline{\v}\,d\x - a(\u_{\z},\v) \qquad \forall \v \in \Hzcurl.
    \end{align}
    From Lemma \eqref{lemma:bijection_a}  and \eqref{eqn:contLifting}, it follows that  this problem is well-posed, and 
    $$
    \|\u_0 \|\lesssim \|\jc\|_{0,\Omega}+\|\z \|_{\curlS}.
    $$
    Finally, $\u :=\u_0+\u_{\z}$ is the unique solution of (\ref{eq:state_sol}) and 
    $$\|\u\|_{\curl,\Omega}\lesssim \|\jc\|_{0,\Omega}+ \|\z \|_{\curlS},$$
    which finishes the proof.
\end{proof} 

From the previous analysis, $\u$ depends on both $\z$ and $\jc$. The goal of the next section is to reduce the cost functional $\J(\u,\z) = \J(\u(\z;\jc),\z)$ to be only a function of $\z$, and then derive the optimality conditions.


\section{Reduced cost functional and its derivative }
\label{sec:rcf}


For the remainder of the paper, we will assume that  $\jc $ is given. By introducing the control-to-state map, we can obtain the so-called reduced optimization problem. The solution, or control-to-state,  map is affine and it is given by   
\begin{align}
\S: Z & \to \Hcurl \hookrightarrow \L  \nonumber\\
                       \z & \mapsto \S \z:=\u,\label{def:S} 
\end{align}
where  $\u$  is the unique solution to (\ref{eq:state_sol}) with right-hand-side $\jc$ and the boundary condition $\z\times \n$. The notation $\hookrightarrow$ indicate the continuous embedding, as a result, we can consider $\S : Z \rightarrow \L$. 
The solution operator $\S$ is an affine map, and it is common to split $\S$ into the part that depends on $\z$ and the part  that depends on $\jc$. We write
	\[
		\S\z = \Spo\z + \u_\Omega,
	\]
where $\Spo$ is the solution operator for the state equation with $\jc\equiv {\bf 0}$, and $\u_{\Omega}$ is the solution for the state equation when  $\z\equiv \bm 0$.  By Lemma~\ref{lemma:state_sol}, $\S$ is continuous and there exists a $C=C({\bm \mu},{\bm \kappa},\Omega)>0$ such that 
\begin{align}
    \|\S \z \|_{\curl,\Omega}= \|\u \|_{\curl,\Omega}\leq C\left(\|\jc \|_{0,\Omega} +\|\z \|_{\curlS}  \right).
    \label{eqn:contS}
\end{align}

Therefore, the {\it reduced cost functional} $j(\z):=\J(\S \z,\z)$ is also continuous, and we use the splitting above to write
\begin{align}\label{eq:reduced_cost}
            \displaystyle  \min_{\z\in Z} j(\z) =  
            \ds  \min_{\z\in Z} \frac{1}{2}\int_\Omega |\Spo \z-\widehat{\u}_d|^2\,d\x+\frac{\alpha}{2}\int_{\Gamma} |\curlS\z|^2\,dS
            +\frac{\beta}{2}\int_{\Gamma} |\z|^2\,dS, 
\end{align}
where $\widehat{\u}_d = \u_d-\u_\Omega.$ Therefore, without loss of generality, in what follows we will consider 
$\jc\equiv \bm 0$, then  $\S= \Spo$. Notice that in this case, $\u_\Omega= 0$. 

 Our goal now is to discuss the existence and uniqueness of solution to the reduced optimization problem
	\begin{align}
            \displaystyle  \min_{\z\in Z} j(\z)= \min_{\z\in Z} \J(\S \z ,\z).\label{def:ReducedFunctional}
	\end{align}
The proof follows immediately from the direct method of calculus of variations, we sketch it for completeness. 

\begin{theorem}[existence and uniqueness] The problem (\ref{def:ReducedFunctional}) has a unique solution $\barz\in Z$.
\end{theorem}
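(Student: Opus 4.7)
The plan is to apply the direct method of the calculus of variations. First I would verify that the admissible set $Z$, as defined in \eqref{eq:Z}, is a (complex) Hilbert space: it is a closed subspace of $\HcurlS$ because the zero–mean condition $\int_{\Gamma}\curlS \z \, dS = 0$ is a continuous linear constraint on $\HcurlS$, so $Z$ equipped with the $\|\cdot\|_{\curlS}$ inner product is reflexive and admits weak subsequential limits of bounded sequences.

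Next I would take a minimizing sequence $\{\z_n\}\subset Z$ for $j$. Since $j\ge 0$, the infimum $m := \inf_{\z\in Z} j(\z)$ is finite, and $j(\z_n)\to m$. Coercivity is immediate from the regularization terms:
\begin{equation*}
    j(\z_n) \ge \frac{\alpha}{2}\|\curlS \z_n\|_{L^2(\Gamma)}^2 + \frac{\beta}{2}\|\z_n\|_{\bm L^2(\Gamma)}^2
    \gtrsim \|\z_n\|_{\curlS}^2,
\end{equation*}
using $\alpha,\beta>0$ together with $(a+b)^2\le 2(a^2+b^2)$. Hence $\{\z_n\}$ is bounded in $Z$, and by reflexivity we extract a subsequence (not relabeled) with $\z_n \rightharpoonup \barz$ weakly in $Z$ for some $\barz\in Z$ (closedness of $Z$ gives $\barz\in Z$).

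Now I would argue weak lower semicontinuity of $j$. The control-to-state map $\Spo : Z \to \L$ is a bounded linear operator by the continuity estimate \eqref{eqn:contS} (with $\jc\equiv\bm 0$), hence it is weakly continuous; in particular $\Spo \z_n \rightharpoonup \Spo \barz$ in $\L$. Since each of the three terms of $j$ is a continuous convex function of $\z$ (the first as the composition of the convex continuous $\|\cdot - \widehat{\u}_d\|_{0,\Omega}^2$ with the continuous linear $\Spo$, and the other two being squared Hilbert norms), each is weakly lower semicontinuous on $Z$. Therefore
\begin{equation*}
    j(\barz) \le \liminf_{n\to\infty} j(\z_n) = m,
\end{equation*}
and $\barz$ is a minimizer.

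Uniqueness follows from strict convexity: the term $\frac{\beta}{2}\|\z\|_{\bm L^2(\Gamma)}^2$ with $\beta>0$ is strictly convex on $Z$, and the other two terms are convex, so $j$ itself is strictly convex and can have at most one minimizer. The main obstacle, such as it is, is ensuring that the zero–mean condition passes to weak limits (which is automatic since it is defined by a continuous linear functional on $\HcurlS$) and that coercivity is genuinely provided by the boundary regularization, so that the minimizing sequence is bounded in the $\curlS$–graph norm rather than merely in some weaker topology.
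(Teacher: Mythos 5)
Your proof is correct and follows essentially the same route as the paper: the direct method of the calculus of variations, with coercivity from the regularization terms, weak sequential compactness in the Hilbert space $Z$, weak lower semicontinuity of $j$, and uniqueness from strict convexity. You simply fill in details (closedness of $Z$ under the zero-mean constraint, weak continuity of $\Spo$, convexity arguments) that the paper leaves as a sketch.
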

\begin{proof} 
Notice that $j(\cdot)$ is bounded below, therefore there exists an infimizing sequence $\{\z_n\}_{n=1}^\infty$ such that 
$\ds \inf_{\z \in Z} j(\z) = \lim_{n \rightarrow \infty}j(\z_n)$. The previous limit and the definition of $j(\cdot)$ implies that 
$\{\z_n\}_{n=1}^\infty$ is a bounded sequence in $Z$. Notice that $Z$ is closed subspace of a Hilbert space and
is therefore a Hilbert space itself. Thus the boundedness of sequence $\{\z_n\}_{n=1}^\infty$ implies that there exists 
a subsequence (not relabeled) that converges to $\widehat\z$ in $Z$. It then remains to show that $\widehat\z$ is the 
minimizer of \eqref{def:ReducedFunctional}. This immediately follows from weak lower semicontinuity of $j(\cdot)$. 
The uniqueness is a direct consequence of the strict convexity of $j(\cdot)$. 
\end{proof}

As usual, in order to f{}ind the optimal control $\barz$ we want to use a gradient-based method to identify the critical points of the cost functional $j(\cdot)$ in \eqref{eq:reduced_cost}. Nevertheless, there is a big difference between differentiability over typical real and complex fields. However, Fr\'echet and G\^{a}teaux differentiability on complex fields are quite similar, cf. \cite{Zorn1945a,Zorn1945b,Zorn1946}. Notice that, $|\Spo \z-\widehat{\u}_d|^2$ in \eqref{eq:reduced_cost} is smooth, but as we will see below, it is not complex G\^{a}teaux differentiable. Thus, to study \eqref{eq:reduced_cost}, we need some weaker notion for the  derivative of $j$.  
Next, we will develop a notion of derivative for $j$ that is more general than G\^{a}teaux or Fr\'echet derivatives for complex spaces but strong enough to have some notion of Taylor series expansion that will allow us to characterize critical points of $j$.  

Before we begin our discussion of derivatives, we introduce what it means for a complex-valued function to be $\mathcal C^1$, but not necessarily analytic, see for instance \cite[(2.1)]{Lempert1998}. 
\begin{definition}[$\mathcal{C}^1$ functions] 
\label{def:C1}
For a complex Banach space $\mathcal U$, suppose $\mathcal D\subset \mathcal U$ is open and $u:\mathcal D \to \C$ is a function.  If the directional derivatives
\begin{align}
 du(\z;\bm \xi)=\lim_{t \to 0} \frac{u(\z+t\bm \xi)-u(\z)}{t},\qquad t \in \mathbb R
\end{align}
exist for all $\z\in \mathcal D$, $ \bm \xi \in \mathcal U$, and $du:\mathcal D\times \mathcal U \to \C$ is continuous, we write $u\in \mathcal{C}^1(\mathcal D)$.
\end{definition}
Consider the function
\begin{equation}
 \label{eq:zsq}
 \z\mapsto \z\overline{\z}=|\z|^2, 
\end{equation}
which is $\mathcal{C}^1(\C)$ but not complex Fr\'echet differentiable, nor complex G\^{a}teaux differentiable. This shows that complex differentiability is too restrictive, especially in the context of optimization. A weaker notion of ``complex derivative'' which has most of the required properties in optimization is the so-called Wirtinger derivative, cf. \cite{Wirtinger27}.

In the next section, we extend the notion of Wirtinger derivatives to spaces which are the complexification of a real Hilbert space, and our analysis follows as in the case $f:\C \to \C,$ cf. \cite{van-den-Bos}.


\subsection{ Wirtinger derivatives on complexified Hilbert spaces} 
\label{s:Wirderiv}

Let $(\U,\|\cdot\|_\U)$ be the complexification of a real Hilbert space $(H, (\cdot ,\cdot)_H)$, and let $f:\mathcal{U}\to \C$ be a continuous function but not complex differentiable. We can extend $f$ to a function on $\mathcal{U}\times \mathcal{U}$. Let $g: \mathcal{U}\times \mathcal{U}\to \C$  be a continuous extension of $f$ such that 
\[
	g(\z, \barz) =f(\z)  \quad \forall \z\in \mathcal{U}.
\]
Now, let us assume $g$ is complex  Fr\'echet differentiable, and define 
\[
 \frac{\partial f}{\partial \z}\Bigr\lvert_{\z=\z_0} := \nabla g\left( \z_0, \barz_0\right)(\e_1),\qquad \qquad 
 \frac{\partial f}{\partial \barz}\Bigr\lvert_{\z=\z_0}  := \nabla g\left(\z_0, \barz_0 \right)(\e_2) , 
\]
where the unit vectors $\e_1$ and $\e_2$ will give us the first and second components of $\nabla g$. 
Even though, the existence of an extension $g$, which is complex Fr\'echet differentiable, of $f$ may look too restrictive, however such an extension exists when $f$ is continuous and $f$ is analytic for $\z$ and $\barz$; separately, cf.  \cite[p. 2]{Kaup83}, these conditions hold for the function in \eqref{eq:zsq} for instance. This follows from {\it Hartogs' theorem} or one of its generalizations to infinite-dimensional spaces; see for instance, \cite[Thm. 3.2]{Matos78}.  Now, for any $\z_0$ and $\delta \z$ in $\mathcal{U}$, the following limit exists and the resulting expression is linear in  $\delta \z$,
\begin{align}\label{def:dRf}
    \begin{aligned}
        \mathrm{d}^{\mathbb{R}} f(\z_0;\delta\z):=&  \lim_{\genfrac{}{}{0pt}{}{t\rightarrow 0}{\mathrm{Im}(t)=0}} \frac{ f(\z_0+t\delta \z)-f(\z_0)  }{t}\\
        =&\lim_{\genfrac{}{}{0pt}{}{t\rightarrow 0}{\mathrm{Im}(t)=0}} \frac{ 
        g\left(\begin{pmatrix} \z_0, \barz_0 \,\, \end{pmatrix}
        +t\begin{pmatrix} \delta\z, \overline{\delta\z} \,\, \end{pmatrix}\right)
        -g\begin{pmatrix} \z_0, \barz_0 \,\, \end{pmatrix}  }{t}\\
        =& \left(  \begin{pmatrix} \delta\z\,\,\\ \overline{\delta\z} \,\, \end{pmatrix}, \overline{\nabla g}(\z_0,\barz_0) \right)_{\mathcal{U}\times \mathcal{U}}\\
        =& \left(\delta \z, \overline{ \frac{\partial f}{\partial \z}}(\z_0)\right)_\U + \left(\overline{\delta \z},\overline{ \frac{\partial f}{\partial \barz}}(\z_0)\right)_\U.
    \end{aligned}
\end{align}

 \begin{theorem}
    Let $\U$ be the complexification of a real Hilbert space $H$,  and consider a continuous function 
    $f:\U\mapsto \mathbb{R}, \z \mapsto f(\z)$, such that $f$ is analytic in $\z$ and in $\barz$, separately, then $ \mathrm{d}^{\mathbb{R}} f(\z_0;\delta\z)$ defined in \eqref{def:dRf} exists. Moreover, 
    \begin{align}\label{eq:dRfmod}
\mathrm{d}^{\mathbb{R}} f(\z_0;\delta\z)  = &~  
   2 \mathrm{Re} \left( \frac{\partial f}{\partial \barz}(\z_0), \delta \z\right)_\U \qquad \forall \z_0,\delta \z \in \U.      
\end{align}
 \end{theorem}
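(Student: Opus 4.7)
The plan is to start from the expression already derived in \eqref{def:dRf}. Existence of $\mathrm{d}^{\mathbb R} f(\z_0;\delta\z)$ is immediate from that display: the hypotheses (continuity together with separate analyticity in $\z$ and $\barz$) provide, via the infinite-dimensional Hartogs theorem cited above, a complex Fréchet-differentiable extension $g:\U\times\U\to\C$ with $g(\z,\barz)=f(\z)$, and the real-directional limit in \eqref{def:dRf} is then computed by the chain rule applied to $g$. So the actual content of the theorem is to collapse the two terms on the right of \eqref{def:dRf} into the single expression $2\,\mathrm{Re}\,(\partial f/\partial \barz(\z_0),\delta\z)_\U$, using the additional hypothesis that $f$ takes values in $\mathbb R$.

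The central intermediate step I would prove is a Wirtinger conjugate symmetry for real-valued $f$:
$$\overline{\tfrac{\partial f}{\partial \z}(\z_0)} \;=\; \tfrac{\partial f}{\partial \barz}(\z_0).$$
Set $\tilde g(\z,\w):=\overline{g(\barw,\barz)}$. Since $f$ is real, $\tilde g(\z,\barz)=\overline{f(\z)}=f(\z)$, so $\tilde g$ extends $f$ as well. The map $\tilde g$ is separately holomorphic in each slot, because conjugating the argument of a holomorphic function produces an antiholomorphic one and the outer complex conjugate restores holomorphy. Uniqueness of the separately-holomorphic extension of $f$ from the totally real submanifold $\{\w=\barz\}$ then forces $g\equiv\tilde g$ on $\U\times\U$. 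Differentiating the identity $g(\z,\w)=\overline{g(\barw,\barz)}$ in $\z$ at fixed $\w$, and then setting $\w=\barz$, yields the claimed symmetry (using the elementary Wirtinger fact that $\partial_s\,\overline{G(\bar s)} = \overline{G'(\bar s)}$ for holomorphic $G$, which one checks from the Cauchy--Riemann equations or directly from power series). I expect the main obstacle to be a clean justification of the uniqueness statement in the Banach-space setting; the cleanest route is to expand $g-\tilde g$ as a power series at a base point and observe that, together with polarization, the vanishing on $\{\w=\barz\}$ forces all the separately-multilinear coefficients to vanish.

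The remainder is algebraic. A short direct computation from $(x_1+iy_1,x_2+iy_2)_\U = (x_1,x_2)_H+(y_1,y_2)_H+i\bigl[(y_1,x_2)_H-(x_1,y_2)_H\bigr]$ shows that the inner product on the complexification satisfies $(\overline{\z},\overline{\w})_\U=\overline{(\z,\w)_\U}$. Substituting the Wirtinger symmetry into \eqref{def:dRf} then gives
$$\mathrm{d}^{\mathbb R} f(\z_0;\delta\z) \;=\; \left(\delta\z,\tfrac{\partial f}{\partial \barz}(\z_0)\right)_\U + \overline{\left(\delta\z,\tfrac{\partial f}{\partial \barz}(\z_0)\right)_\U} \;=\; 2\,\mathrm{Re}\left(\delta\z,\tfrac{\partial f}{\partial \barz}(\z_0)\right)_\U,$$
and conjugate symmetry of the inner product lets one exchange arguments under $\mathrm{Re}$, producing \eqref{eq:dRfmod}.
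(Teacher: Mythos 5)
Your proposal is correct in its overall architecture and arrives at the same pivotal identity as the paper, namely the conjugate symmetry $\overline{\partial f/\partial \z}(\z_0)=\partial f/\partial\barz(\z_0)$ for real-valued $f$, after which the algebra with the complexified inner product (including the fact that $(\overline{\z},\overline{\w})_\U=\overline{(\z,\w)_\U}$) is identical. Where you genuinely diverge is in how that symmetry is established. The paper's argument is elementary and entirely self-contained: it observes that $\mathrm{d}^{\mathbb{R}}f(\z_0;\delta\z)$ is real because $f$ is real-valued and $t$ ranges over $\R$, expands the last line of \eqref{def:dRf} in real and imaginary components using the explicit formula for $(\cdot,\cdot)_\U$, and then tests with directions having $\delta^{Re}\z=\bm 0$ and $\delta^{Im}\z=\bm 0$ separately to read off $f^{Re}_{\z}=f^{Re}_{\barz}$ and $f^{Im}_{\z}=-f^{Im}_{\barz}$. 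You instead construct the second extension $\tilde g(\z,\w)=\overline{g(\barw,\barz)}$ and appeal to uniqueness of the separately holomorphic extension off the totally real set $\{\w=\barz\}$ to force $g\equiv\tilde g$, then differentiate. This is a legitimate and more conceptual route --- it explains \emph{why} the symmetry holds (reality of $f$ is encoded as a symmetry of the unique extension) rather than extracting it componentwise --- but it buys this at the cost of an identity-theorem/uniqueness statement for analytic maps on complex Banach spaces that the paper never needs; your power-series-plus-polarization sketch (the $n$-th Taylor coefficient is a symmetric $\C$-multilinear form vanishing on the real form $\{(\bm u,\overline{\bm u})\}$ of $\U\times\U$, hence vanishing identically) does close that gap, but it is the one step you would need to write out carefully, whereas the paper's direction-testing argument requires nothing beyond the definition of the inner product on the complexification.
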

\begin{proof}
 The existence of $\mathrm{d}^{\mathbb{R}} f(\z_0;\delta\z)$ follows from the previous analysis. 
 On the other hand, if $f$ is real-valued then by definition $\mathrm{d}^{\mathbb{R}} f(\z_0;\delta\z) $ is also real-valued, yielding 
 \begin{equation} \label{eq:partDerConj}
\frac{\partial f}{\partial \z}(\z_0)= \overline{ \frac{\partial f}{\partial \barz}}(\z_0).
\end{equation}
In order to prove this identity,  consider the inner product on $\mathcal{U}$ given by 
\begin{align*}
 \left ( \u_1+i\v_1,\u_2+i\v_2 \right)_{\mathcal{U}}:=
 \left ( \u_1,\u_2\right)_{H} +  \left ( \v_1,\v_2\right)_{H}
 +\i \left( \left ( \v_1,\u_2\right)_{H}-  \left ( \u_1,\v_2\right)_{H}  \right).
\end{align*}
Now, we consider the splitting between real and imaginary parts 
\begin{align*}
 \delta \z = \delta^{Re} \z + \i  \delta^{Im} \z,\quad 
 \frac{\partial f}{\partial \z}(\z_0) = f^{Re}_{\z} + \i f^{Im}_{\z}, \mbox{ and }
 \frac{\partial f}{\partial \barz}(\z_0) = f^{Re}_{\barz} + \i f^{Im}_{\barz}.
\end{align*}
In turn, from (\ref{def:dRf})
\begin{align*}
d^{\R}f(\z_0;\delta \z) =&
 \left( \delta \z, \overline{ \frac{\partial f}{\partial \z}}(\z_0)  \right)_{\mathcal{U}}
 +  \left ( \overline{\delta \z}, \overline{ \frac{\partial f}{\partial \barz}}(\z_0)  \right)_{\mathcal{U}}\\
 =&
 \left ( \delta^{Re} \z + \i  \delta^{Im} \z, f^{Re}_{\z} - \i f^{Im}_{\z}  \right)_{\mathcal{U}}
 +  \left ( \delta^{Re} \z - \i  \delta^{Im} \z, f^{Re}_{\barz} - \i f^{Im}_{\barz} \right)_{\mathcal{U}},
\end{align*}
and because  $\ds \mathrm{Im}\{ d^{\R}f(\z_0,\delta \z)\}=0$,  we have 
\begin{align*}
 \left( f^{Re}_{\z}, \delta^{Im} \z  \right)_{H}+ 
 \left (  \delta^{Re} \z, f^{Im}_{\z}   \right)_{H}  
 - \left ( f^{Re}_{\barz}, \delta^{Im} \z  \right)_{H}+
 \left (  \delta^{Re} \z, f^{Im}_{\barz}  \right)_{H} =0.
\end{align*}
In particular, if $\delta^{Re} \z=\bm 0$ we get 
\begin{align*}
 \left ( f^{Re}_{\z}, \delta^{Im} \z  \right)_{H}  
 - \left ( f^{Re}_{\barz}, \delta^{Im} \z  \right)_{H} 
 =0\Leftrightarrow  f^{Re}_{\z} = f^{Re}_{\barz}.
\end{align*}
In turn,  if $\delta^{Im} \z=\bm 0$ we get 
\begin{align*}
 \left (  \delta^{Re} \z, f^{Im}_{\z}   \right)_{H}  
 +\left (  \delta^{Re} \z, f^{Im}_{\barz}  \right)_{H} 
 =0\Leftrightarrow  f^{Im}_{\z} = -f^{Im}_{\barz}.
\end{align*}
Thus, 
 \begin{equation*} 
\frac{\partial f}{\partial \z}(\z_0)= \overline{\left( \frac{\partial f}{\partial \barz} (\z_0)\right)}, \mbox{ and therefore}
\overline{\left( \delta \z,  \overline{ \frac{\partial f}{\partial \z}}(\z_0)\right)}_\U=  
 \left( \overline{\delta \z},\overline{\frac{\partial f}{\partial \barz}}(\z_0) \right)_\U,
\end{equation*}
which concludes the proof.
\end{proof}

Our next goal is to relate $\mathrm d ^\mathbb{R} f$ given in \eqref{eq:dRfmod} with a gradient so that we can derive the f{}irst-order optimality conditions for problem \eqref{eq:obFun}.
In order to do that,  we identify $f$ with a real functional $u$, namely $u: H\times H\to \mathbb{R}$ that satisfies $f(\z)=f(\x+i\y)=u(\x,\y)$  for all $\z=\x+i\y $ in $ \mathcal{U}.$  
From the regularity of $g$ (as defined above) we obtain  
\begin{align}
\label{eq:secchar}
 \mathrm{d}^{\mathbb{R}} f(\z;\delta\z) = &
  \lim_{\genfrac{}{}{0pt}{}{t\rightarrow 0}{t\in \R}} \frac{u(\x+t\delta {\x}, \y+t\delta {\y})-u(\x,\y)}{t} \nonumber \\
  = & \left(\nabla u, \begin{pmatrix}
                        \delta {\x}\\ \delta {\y}
                       \end{pmatrix}
 \right)_{H\times H}  \\
 =& \left(\frac{\partial u}{\partial \x},  \delta {\x} \right)_H+ \left(\frac{\partial u}{\partial \y},  \delta {\y} \right)_H, \nonumber
\end{align}
where $\delta \z= \delta {\x}+i\delta {\y}$. Consequently, $f\in \mathcal{C}^1(\mathcal{U})$ 
(see Definition~\ref{def:C1}) and we have the following identities
\begin{align}
 u\left( \begin{pmatrix}\x, \y \end{pmatrix}+ \begin{pmatrix}\delta\x, \delta  \y \end{pmatrix} \right) =& ~ u \begin{pmatrix}\x, \y \end{pmatrix}+ \left(\nabla u(\widehat{\x},\widehat{\y}), \begin{pmatrix}
                        \delta {\x}, \delta {\y}
                       \end{pmatrix}\right),\label{def:RTaylor}\\
 f(\z+\delta\z)=&~ f(\z)+ \mathrm{d}^{\mathbb{R}} f(\widehat{\z};\delta\z), \label{def:CTaylor} 
\end{align}
for some $\widehat{\z}= \widehat{\x}+i\widehat{\y} $ in the  segment $[\z, \z+\delta \z].$ From now on, the expression $\ds \mathrm{d}^{\mathbb{R}} f(\z;\delta\z) $ will be called the \emph{$\R\!-$linear derivative} of $f$ at $\z$ in the direction $\delta \z.$ Many properties for $f$ can be obtained from  properties of extension $g$ or using the relation with $u$; for instance,  $f$ is (real) convex if and only if $u$ is convex. Also, one can determine the directions of steepest descent and stationary points. In fact, we have the following result.
\begin{theorem}[Steepest descent]
Let $f$, $g$ and $u$ be as above, then the direction of steepest descent for $f$ at $\z_0=\x_0+\i\y_0$ is given by
 \begin{align*} 
  \delta \z =&  -  \overline{\frac{\partial f}{\partial \z}}(\z_0)
  					= - \frac{\partial f}{\partial \barz}(\z_0),
\intertext{or equivalently}
  \begin{pmatrix}
   \delta \x\\\delta \y
  \end{pmatrix}
  =& - \nabla u (\x_0,\y_0).
\end{align*} 
\end{theorem}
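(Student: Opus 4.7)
The plan is to invoke the two equivalent characterizations of the $\R$-linear derivative already established in the excerpt: the complex form \eqref{eq:dRfmod} and the real form \eqref{eq:secchar}. By the Taylor-type expansion \eqref{def:CTaylor}, the direction of steepest descent for $f$ at $\z_0$ is the unit vector $\delta\z \in \U$ that minimizes $\mathrm{d}^{\R}f(\z_0;\delta\z)$; equivalently, in the real picture, it is the unit vector in $H\times H$ that minimizes the right-hand side of \eqref{eq:secchar}. So the whole task reduces to two Cauchy--Schwarz arguments, one in $\U$ and one in $H\times H$, together with the conjugation identity \eqref{eq:partDerConj} that was already proved.

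For the complex statement I would apply Cauchy--Schwarz directly to \eqref{eq:dRfmod}: for any $\delta\z$ with $\|\delta\z\|_\U = 1$,
$$
\mathrm{d}^{\R}f(\z_0;\delta\z) \;=\; 2\,\mathrm{Re}\left(\frac{\partial f}{\partial\barz}(\z_0),\,\delta\z\right)_{\!\U} \;\geq\; -2\left\|\frac{\partial f}{\partial\barz}(\z_0)\right\|_\U,
$$
with equality attained exactly when $\delta\z$ is a negative real scalar multiple of $\partial f/\partial\barz(\z_0)$. Dropping the normalization, this identifies the direction of steepest descent as $\delta\z = -\partial f/\partial\barz(\z_0)$; the second equality in the theorem then follows immediately from \eqref{eq:partDerConj}, which gives $\partial f/\partial\barz(\z_0) = \overline{\partial f/\partial\z}(\z_0)$.

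For the real form I would argue analogously in $H\times H$: Cauchy--Schwarz applied to
$$
\mathrm{d}^{\R}f(\z_0;\delta\z) \;=\; \left(\nabla u(\x_0,\y_0),\, \begin{pmatrix}\delta\x\\ \delta\y\end{pmatrix}\right)_{\!H\times H}
$$
shows that, over unit vectors, the minimizing direction is (up to positive scaling) $\begin{pmatrix}\delta\x\\ \delta\y\end{pmatrix} = -\nabla u(\x_0,\y_0)$. Consistency between the complex and real statements is simply a restatement of the relation between \eqref{eq:dRfmod} and \eqref{eq:secchar}, which was derived from the common extension $g$, so no further work is required.

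I do not anticipate any substantive obstacle. The only care point is that on the complexified space $\U$ the inner product is conjugate-linear in one slot; however, the map $(a,b)\mapsto \mathrm{Re}(a,b)_\U$ is a genuine real inner product on $\U$ viewed as a real Hilbert space, so the Cauchy--Schwarz bound together with its equality condition transfers directly and yields a unique (up to positive scaling) minimizing direction, which is exactly what the theorem asserts.
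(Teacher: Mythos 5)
Your proposal is correct and follows essentially the same route as the paper, which simply invokes the two characterizations \eqref{eq:dRfmod} and \eqref{eq:secchar} together with \eqref{eq:partDerConj}; you have merely made explicit the Cauchy--Schwarz step (and its equality condition for the real inner product $\mathrm{Re}(\cdot,\cdot)_\U$) that the paper leaves implicit. No gaps.
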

\begin{proof}
 The result follows from the two characterizations of $\mathrm{d}^{\mathbb{R}} f$ given in \eqref{eq:dRfmod} and \eqref{eq:secchar}, 
 respectively. In the first case, we have also used \eqref{eq:partDerConj}. The proof is complete.
\end{proof}
From  the previous theorem we obtain 

$$ \ds\frac{\partial f}{\partial \z}(\z) = \frac{\partial u}{\partial \x}(\x,\y)+\i\frac{\partial u}{\partial \y}(\x,\y), ~\mbox{and }~   \ds\frac{\partial f}{\partial \barz}(\z) = \frac{\partial u}{\partial \x}(\x,\y)-\i\frac{\partial u}{\partial \y}(\x,\y).$$
It is common  to write these identities, called the {\it Wirtinger derivatives} (in finite dimensions) for a real-valued function $f$, as 
\begin{align}
\frac{\partial f}{\partial \z}&= \frac{\partial f}{\partial \x}+\i\frac{\partial f}{\partial \y},\label{eq:dzf} \\
 \frac{\partial f}{\partial \barz}&= \frac{\partial f}{\partial \x}-\i\frac{\partial f}{\partial \y}. \label{eq:dz*f} 
\end{align}
\begin{lemma}
Let $f$  be as above. Then, $\z_0=\x_0+i\y_0$ is a stationary point of $f$ if and only if
\[
 \frac{\partial f}{\partial \z}(\z_0)=\bm 0  \qquad \text{or} \qquad 
 		\frac{\partial f}{\partial \barz}(\z_0) =\bm 0.
\]
\begin{proof}
 It is enough to identify $f$ with $u$, then $\ds \nabla u (\x_0,\y_0)=\bm 0$, and using (\ref{eq:dzf}) and  (\ref{eq:dz*f}) the proof is complete.
\end{proof}
\end{lemma}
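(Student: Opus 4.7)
The plan is to exploit the identification between $f$ on the complexified space $\mathcal U$ and the real-valued $u$ on $H\times H$ given by $f(\x+i\y)=u(\x,\y)$, and to recycle the two parallel characterizations of the $\R$-linear derivative that the paper has already established: the complex one in \eqref{eq:dRfmod} and the real one in \eqref{eq:secchar}. A stationary point of $f$ is by definition a $\z_0\in\mathcal U$ at which $\mathrm d^{\R}f(\z_0;\delta\z)=0$ for every direction $\delta\z\in\mathcal U$, so the task reduces to translating this condition into a statement about the partial Wirtinger derivatives.

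For the forward implication, I would start from the real characterization \eqref{eq:secchar}. Writing $\delta\z=\delta\x+i\delta\y$, stationarity forces
\begin{equation*}
\left(\frac{\partial u}{\partial \x}(\x_0,\y_0),\delta\x\right)_H+\left(\frac{\partial u}{\partial \y}(\x_0,\y_0),\delta\y\right)_H=0
\end{equation*}
for all $(\delta\x,\delta\y)\in H\times H$. Choosing $\delta\y=\bm 0$ and letting $\delta\x$ range over $H$ gives $\partial_{\x}u(\x_0,\y_0)=\bm 0$; symmetrically $\partial_{\y}u(\x_0,\y_0)=\bm 0$. Plugging into the Wirtinger formulas \eqref{eq:dzf} and \eqref{eq:dz*f} then yields $\partial_{\z}f(\z_0)=\bm 0$ and $\partial_{\barz}f(\z_0)=\bm 0$ simultaneously.

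For the converse, suppose either $\partial_{\barz}f(\z_0)=\bm 0$ or $\partial_{\z}f(\z_0)=\bm 0$. By the conjugation identity \eqref{eq:partDerConj} these two conditions are in fact equivalent for a real-valued $f$, so both Wirtinger derivatives vanish at $\z_0$. Applied to \eqref{eq:dRfmod}, this gives $\mathrm d^{\R}f(\z_0;\delta\z)=2\,\mathrm{Re}\,(\bm 0,\delta\z)_{\mathcal U}=0$ for every $\delta\z\in\mathcal U$, which is precisely the stationarity of $\z_0$. Equivalently, one may observe from \eqref{eq:dzf}--\eqref{eq:dz*f} that $\partial_{\barz}f(\z_0)=\bm 0$ forces $\nabla u(\x_0,\y_0)=\bm 0$, and then invoke \eqref{eq:secchar}.

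I do not expect any real obstacle: all of the machinery needed (the extension $g$, the Wirtinger identities, the real/complex dictionary \eqref{def:dRf}--\eqref{eq:partDerConj}, and the two forms of $\mathrm d^{\R}f$) has been built up in the preceding subsection, so the lemma is essentially a bookkeeping step. The only point that merits a brief explicit remark is that an arbitrary variation $\delta\z\in\mathcal U$ corresponds to arbitrary pairs $(\delta\x,\delta\y)\in H\times H$, which is automatic from the definition of the complexification and justifies the independent choices of $\delta\x$ and $\delta\y$ in the argument above.
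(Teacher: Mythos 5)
Your argument is correct and follows essentially the same route as the paper: identify $f$ with the real functional $u$, note that stationarity is equivalent to $\nabla u(\x_0,\y_0)=\bm 0$, and read off the conclusion from the Wirtinger formulas \eqref{eq:dzf}--\eqref{eq:dz*f} (with the conjugation identity \eqref{eq:partDerConj} explaining why ``or'' suffices). You have simply spelled out both implications that the paper's one-line proof leaves implicit.
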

Finally, we give the following useful theorem holds. By $\mathrm{Dom}(f)$, we indicate the domain of function $f$. 
\begin{theorem} \label{thm:optimality}
 Let $f$ be as above, and assume $f:\mathrm{Dom}(f)\to \C$ with  $\mathrm{Dom}(f)$ convex. If $\z_0$ is an optimal point then
\begin{align}
	\mathrm{d}^{\mathbb{R}} f(\z_0;\z-\z_0)\geq 0,  \quad  \forall \z \in \mathrm{Dom}(f).\label{eqn:1stOrder}
\end{align}
In addition, if $f$ is (real) convex then \eqref{eqn:1stOrder} is a sufficient condition. 
\end{theorem}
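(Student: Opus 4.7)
The plan is to reduce both implications to the classical arguments for convex optimization on a real Hilbert space, via the identification $f(\x+\i\y) = u(\x,\y)$ established earlier, noting that $f$ is real convex on $\mathrm{Dom}(f)$ iff $u$ is convex on the corresponding set in $H\times H$, and that the $\R$-linear derivative admits the equivalent representation
$$
\mathrm{d}^{\R} f(\z_0;\delta\z) = \left(\frac{\partial u}{\partial \x}(\x_0,\y_0),\delta\x\right)_H + \left(\frac{\partial u}{\partial \y}(\x_0,\y_0),\delta\y\right)_H
$$
derived in \eqref{eq:secchar}. The whole proof then reduces to two standard difference-quotient arguments, one for necessity and one for sufficiency.

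For the necessary condition, I fix $\z\in\mathrm{Dom}(f)$ and set $\delta\z:=\z-\z_0$. Convexity of $\mathrm{Dom}(f)$ guarantees $\z_0+t\delta\z\in\mathrm{Dom}(f)$ for every $t\in[0,1]$, so optimality of $\z_0$ gives
$$
\frac{f(\z_0+t\delta\z)-f(\z_0)}{t}\geq 0, \qquad t\in(0,1].
$$
The two-sided real limit defining $\mathrm{d}^{\R} f(\z_0;\delta\z)$ exists by the hypotheses placed on $f$ at the beginning of Subsection~\ref{s:Wirderiv}, hence in particular the one-sided limit $t\to 0^+$ exists and equals $\mathrm{d}^{\R} f(\z_0;\delta\z)$. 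This yields \eqref{eqn:1stOrder}.

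For sufficiency under the additional assumption that $f$ is (real) convex, I use the convexity inequality directly on $u$: for the same $\z_0,\z$ and $t\in(0,1]$,
$$
f(\z_0+t(\z-\z_0)) = f((1-t)\z_0+t\z)\leq (1-t)f(\z_0)+tf(\z),
$$
which rearranges to $(f(\z_0+t(\z-\z_0))-f(\z_0))/t \leq f(\z)-f(\z_0)$. Sending $t\to 0^+$ gives $\mathrm{d}^{\R} f(\z_0;\z-\z_0)\leq f(\z)-f(\z_0)$, and combining with the standing hypothesis $\mathrm{d}^{\R} f(\z_0;\z-\z_0)\geq 0$ yields $f(\z)\geq f(\z_0)$ for every $\z\in\mathrm{Dom}(f)$, i.e.\ $\z_0$ is optimal. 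No genuine obstacle arises here; the only mild point of attention is verifying that the one-sided real limit used above agrees with the two-sided real limit in \eqref{def:dRf}, which is automatic as soon as the latter is known to exist.
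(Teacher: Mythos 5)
Your proof is correct and follows the same route as the paper: the paper's proof simply says the result "follows directly by identifying $f$ with $u$ and applying the well-known result for convex real-valued functions," and your argument is precisely that well-known difference-quotient argument written out in full (one-sided quotient plus optimality for necessity, the convexity inequality plus the sign of the derivative for sufficiency). No discrepancy to report.
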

\begin{proof}
 This result follows directly by identifying $f$ with $u$ and applying the well-known result for convex real-valued functions.
\end{proof}
%


\subsection{The $\mathbb{R}-$linear derivative of the reduced cost functional and optimality conditions}
\label{s:optcond}


Since the map $\z\mapsto \barz\cdot \z$ is not (complex) G\^{a}teaux differentiable, the same can be concluded for the 
reduced functional $j$ defined in (\ref{eq:reduced_cost}). The following lemma shows, however, that   
$\mathrm{d}^{\mathbb{R}} j$ is well-defined.

\begin{lemma} 
\label{lem:Gd}
Let $(X,\|\cdot \|_{X})$ be a complex Banach space, $(\mathcal{U}, (\cdot,\cdot)_\U)$ be a complex Hilbert space, $\widetilde\u\in \U$,  and  $\widetilde\S\in \mathcal{L}(X,\U)$. Then, the convex function $f(\z):= \|\widetilde\S\z -\widetilde\u \|_\U^2$  
has an $\R$-linear derivative given by 
\begin{align*}
    \mathrm{d}^{\mathbb{R}}  f(\z;\v)=&~ 2 \, \mathrm{Re} (\widetilde\S\z -\widetilde\u,\widetilde\S\v)_{\U},
\end{align*}
where $\|\u\|^2_\U:=(\u,\u)_\U$ and  $\mathrm{Re}(a)$ denotes the real part of $a\in \C$. 
\end{lemma}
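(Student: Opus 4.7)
The plan is to apply the definition of the $\R$-linear derivative in \eqref{def:dRf} directly, i.e., to compute
\[
\mathrm{d}^{\mathbb{R}} f(\z;\v) = \lim_{\genfrac{}{}{0pt}{}{t\to 0}{\mathrm{Im}(t)=0}} \frac{f(\z+t\v)-f(\z)}{t}.
\]
The main step is to expand the squared norm $f(\z+t\v) = \|\widetilde\S\z - \widetilde\u + t\widetilde\S\v\|_\U^2$ using the sesquilinearity of the inner product $(\cdot,\cdot)_\U$. Since $\widetilde\S \in \mathcal{L}(X,\U)$, writing $\w := \widetilde\S\z - \widetilde\u$, we get
\[
f(\z+t\v) = (\w + t\widetilde\S\v, \w + t\widetilde\S\v)_\U
= \|\w\|_\U^2 + t(\widetilde\S\v, \w)_\U + \bar t(\w, \widetilde\S\v)_\U + |t|^2 \|\widetilde\S\v\|_\U^2.
\]

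The key observation is that the limit is taken along real $t$, so $\bar t = t$. Using $(\widetilde\S\v,\w)_\U = \overline{(\w,\widetilde\S\v)_\U}$ and the identity $a + \bar a = 2\,\mathrm{Re}(a)$, the two cross terms collapse to $2t\,\mathrm{Re}(\w, \widetilde\S\v)_\U$. Therefore
\[
\frac{f(\z+t\v) - f(\z)}{t} = 2\,\mathrm{Re}(\widetilde\S\z - \widetilde\u, \widetilde\S\v)_\U + t\,\|\widetilde\S\v\|_\U^2,
\]
and letting $t \to 0$ yields the claimed formula. The existence of the limit, along with the continuity of $(\z,\v)\mapsto \mathrm{d}^{\mathbb{R}} f(\z;\v)$ in both arguments (which follows from Cauchy--Schwarz and the boundedness of $\widetilde\S$), also gives $f \in \mathcal{C}^1$ in the sense of Definition~\ref{def:C1}.

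For convexity, the map $\z \mapsto \widetilde\S\z - \widetilde\u$ is $\R$-affine (it is even $\C$-affine), and $\u \mapsto \|\u\|_\U^2$ is a convex real-valued function on $\U$; the composition is therefore convex on $X$ regarded as a real Banach space. There is no real obstacle here; the only subtlety worth flagging is the careful tracking of the convention in which argument the inner product is conjugate-linear, since writing the expansion with the opposite convention would swap the roles of $t$ and $\bar t$ but produce the same real part, so the final identity is convention-independent. This makes the computation essentially a short verification rather than a deep argument.
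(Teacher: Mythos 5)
Your proof is correct and follows essentially the same route as the paper's: expand $\|\widetilde\S\z-\widetilde\u+t\widetilde\S\v\|_\U^2$ by sesquilinearity, collapse the cross terms to $2\,\mathrm{Re}\,\bar t(\widetilde\S\z-\widetilde\u,\widetilde\S\v)_\U$, restrict to real $t$, and pass to the limit. The added remarks on continuity and convexity are fine but not part of the paper's argument.
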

\begin{proof}
We examine the difference quotient from the definition of $\mathrm d^\mathbb{R},$ cf. \ref{def:dRf}, namely: 
\begin{align}\label{eq:fz}
    \frac{f(\z+t\v)-f(\z)}{t}=&~ \frac{\|\widetilde\S(\z+t\v) -\widetilde\u \|_\U^2-\|\widetilde\S\z -\widetilde\u \|_\U^2}{t} \nonumber \\
                             =&~ \frac{\bigl( \|\widetilde\S\z  -\widetilde\u \|_\U^2
                             +2\, \mathrm{Re} (\widetilde\S\z -\widetilde\u,t\widetilde\S\v)_\U+ 
                             \|t\widetilde\S\v\|^2_X \bigr)-\|\widetilde\S\z -\widetilde\u \|_\U^2}{t} \nonumber  \\
                              =&~ \frac{  2\, \mathrm{Re} \,\bar{t}(\widetilde\S\z -\widetilde\u,\widetilde\S\v)_\U+ 
                              |t|^2 \|\widetilde\S\v\|^2_\U }{t}.
\end{align}
Considering $t\in\C$ such that $\mathrm{Im}(t)=0$, and then taking the limit as $t\rightarrow 0$ completes the proof. 
\end{proof}

\begin{remark}
From \eqref{eq:fz}, we can also conclude that $f$ is not complex G\^{a}teaux differentiable.
\end{remark}

In what follows we recall that $\S = \Spo$ and $\u_d = \widehat{\u}_d$.
\begin{corollary}
For $ \z$ and $\bm \xi $ in $ \HcurlS$, the $\R$-linear derivative of $j$ at $\z$ (given in \eqref{eq:reduced_cost}), in the direction $\bm \xi$  is given by
\begin{align}\label{def:dj}
 \mathrm{d}^{\mathbb{R}}  j(\z;\bm \xi)= 
 \mathrm{Re}\Big\{(\S \z-\u_d,\S\bm \xi)_{0,\Omega}
    + \alpha(\curlS,\curlS \bm \xi)_{0,\Gamma}
    + \beta(\z,\bm \xi)_{0,\Gamma}\Big\},
\end{align}
where $(\cdot,\cdot)_{0,\Gamma}$ denotes the inner product in both $L^2(\Gamma)$ and $\bm L^2(\Gamma)$. 
\end{corollary}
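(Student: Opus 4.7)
The plan is to split the reduced cost functional into its three natural summands,
\[
j(\z) \;=\; \underbrace{\tfrac{1}{2}\|\S\z - \u_d\|_{0,\Omega}^{2}}_{j_{1}(\z)}
\;+\; \underbrace{\tfrac{\alpha}{2}\|\curlS \z\|_{0,\Gamma}^{2}}_{j_{2}(\z)}
\;+\; \underbrace{\tfrac{\beta}{2}\|\z\|_{0,\Gamma}^{2}}_{j_{3}(\z)},
\]
and to compute the $\mathbb{R}$-linear derivative of each piece separately, then add. Since $\mathrm{d}^{\mathbb{R}}$ is defined as a limit of a real-scaled difference quotient (see \eqref{def:dRf}), it inherits additivity in the functional and homogeneity under real scalars; consequently $\mathrm{d}^{\mathbb{R}}j(\z;\bm\xi)=\sum_{i=1}^{3}\mathrm{d}^{\mathbb{R}}j_{i}(\z;\bm\xi)$, and the real constants $\tfrac12$, $\tfrac{\alpha}{2}$, $\tfrac{\beta}{2}$ simply pass through the limit.

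Each summand is, up to a positive real constant, of the form $\|\widetilde{\S}\z-\widetilde{\u}\|_{\U}^{2}$ for some bounded linear operator $\widetilde{\S}$ from $Z$ into a complex Hilbert space $\U$, which is exactly the structure to which Lemma~\ref{lem:Gd} applies. For $j_{1}$, take $\U=\bm L^{2}(\Omega)$, $\widetilde{\S}=\S$, $\widetilde{\u}=\u_d$; here $\S$ is linear because we have normalized to $\jc\equiv\bm 0$ in the discussion preceding \eqref{def:ReducedFunctional}, and it is continuous by \eqref{eqn:contS} composed with the embedding $\Hcurl\hookrightarrow\bm L^{2}(\Omega)$. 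For $j_{2}$, take $\U=L^{2}(\Gamma)$, $\widetilde{\S}=\curlS\colon Z\to L^{2}(\Gamma)$, $\widetilde{\u}=0$; linearity is obvious and continuity is built into the definition of the norm \eqref{eq:curlS}. For $j_{3}$, take $\U=\bm L^{2}(\Gamma)$, $\widetilde{\S}$ the continuous embedding $Z\hookrightarrow\bm L^{2}(\Gamma)$, $\widetilde{\u}=0$.

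Applying Lemma~\ref{lem:Gd} to each of the three pieces and absorbing the factor $\tfrac12$ against the factor $2$ produced by the lemma yields
\[
\mathrm{d}^{\mathbb{R}}j_{1}(\z;\bm\xi)=\mathrm{Re}(\S\z-\u_d,\S\bm\xi)_{0,\Omega},\quad
\mathrm{d}^{\mathbb{R}}j_{2}(\z;\bm\xi)=\alpha\,\mathrm{Re}(\curlS \z,\curlS \bm\xi)_{0,\Gamma},\quad
\mathrm{d}^{\mathbb{R}}j_{3}(\z;\bm\xi)=\beta\,\mathrm{Re}(\z,\bm\xi)_{0,\Gamma}.
\]
Summing these and using $\mathbb{R}$-linearity of $\mathrm{Re}$ delivers the claimed formula. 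The only genuine subtlety is verifying that the control-to-state map can be viewed as a bounded $\mathbb{C}$-linear map into the target Hilbert space so that Lemma~\ref{lem:Gd} is legitimately applicable to $j_{1}$; this is precisely the role of the reduction to $\jc\equiv\bm 0$ (so that $\S=\Spo$), combined with the continuity estimate \eqref{eqn:contS}. Everything else is a direct invocation of the lemma.
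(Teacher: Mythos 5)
Your proposal is correct and follows exactly the route the paper intends: the corollary is stated immediately after Lemma~\ref{lem:Gd} with no written proof, the implicit argument being precisely the termwise application of that lemma to the three quadratic summands of $j$ (with $\widetilde{\S}=\S$, $\curlS$, and the embedding $Z\hookrightarrow\bm L^2(\Gamma)$, respectively), which is what you carry out. Your added care in checking that $\S$ is genuinely linear (via the reduction to $\jc\equiv\bm 0$) and bounded is exactly the right hypothesis verification for Lemma~\ref{lem:Gd}.
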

As usual we will avoid computing the term $\S \bm \xi$ by introducing the adjoint of $\S$, denoted by $\S^*$. 
Notice that since $\S$ is bounded linear, therefore  $\S^* : L^2(\Omega) \rightarrow \HcurlS^*$ is well-defined. 
Then from \eqref{def:dj}, given $\bm\xi \in \HcurlS$ we have that $\mathrm{d}^{\mathbb{R}}  j(\z;\bm \xi)$ is 
given by
\begin{align}
 \mathrm{d}^{\mathbb{R}}  j(\z;\bm \xi) = 
  \mathrm{Re}\Big\{\left\langle\S^*(\S\z-\u_d),\bm \xi\right\rangle_{\HcurlS^*,\HcurlS}
    +\alpha (\curlS \z,\curlS \bm \xi)_{0,\Gamma}
    + \beta(\z,\bm \xi)_{0,\Gamma}
    \Big\}. 
  \label{def:dj*}
\end{align}
We will introduce further assumptions on ${\bm \mu}$ and ${\bm \kappa}$ below so that this duality pairing in \eqref{def:dj*} becomes the inner product on $\Lt$. As usual, for optimal control of PDEs, $\S^*$ is the solution operator for a problem similar to the state equation called the {\it adjoint state equation}: for $\f \in \L$ 
find $\w\in \Hcurl$ such that 
\begin{align}\label{eq:Pre_adjoint1}
	\begin{aligned}
		\curl \left(  {\bm \mu}^{-1}\curl \w\right)-(\i\omega){\bm \kappa} \w &=  {\f} \quad \mbox{in } \Omega, \\
                   \w \times \n  &= \bm 0  \quad \mbox{on } \Gamma . 
	\end{aligned}                    
	\end{align}
	The weak formulation of this problem is: find  $\w\in \Hzcurl$ such that 
	\begin{align}
	 a^*(\w,\v)=(\f,\v) \qquad \forall \v \in \Hzcurl, \label{def:AdjointProb}
	\end{align}
where 
\begin{align}
  a^*(\w,\v):=  \overline{a(\v,\w)} \qquad \forall \w,\v \in \Hcurl. \label{def:AdjointForm}
\end{align}
By the Lax-Milgram lemma (see Lemma \ref{lemma:state_sol}), the above problem is well-posed.  Now,  given $\bm \xi$ in $\HcurlS$ we set $\u_{\bm \xi}=\S\bm \xi$, the unique solution to state equation, cf. (\ref{def:S}), namely   
	\begin{align}\label{eq:uxi}
	\begin{aligned}
	 a(\u_{\bm \xi}, \v)&=0 \qquad &&\forall\v \in \Hzcurl,\\
	 \gamma_t \u_{\bm \xi}&= \bm \xi\times \n \quad  &&\mbox{ on } \Gamma.
	\end{aligned} 
	\end{align}
In turn, given $\f\in \L$ let $\w$ be the solution for (\ref{def:AdjointProb}), and testing the first equation in \eqref{eq:Pre_adjoint1} with $\overline{\S\bm \xi}$, and integrating by parts  
 using \eqref{eqn:GreenCurl} and the fact that ${\bm \mu}^{-1}\curl \w \in \Hcurl$ we arrive at 
  \begin{align*}
    (\f,\S\bm \xi)_{0,\Omega}    
                       &= -\left\langle \gamma_t({\bm \mu}^{-1} \nabla \times  \w), \gamma_T \u_{\bm \xi}\right\rangle_{\Gamma^*}\\
                        &= -\left\langle \gamma_t({\bm \mu}^{-1} \nabla \times  \w),  \bm \xi \right\rangle_{\Gamma^*}.
 \end{align*}
Recall that $\langle\cdot,\cdot\rangle_{\Gamma^*}$ represents  the duality pairing  $H_{||}^{-\frac 12}(\divS;\Gamma)\!$ and $\!H_\perp^{-\frac 12}(\curlS;\Gamma)$. Nevertheless, under some additional regularity on $\bm \mu$ and $\bm \kappa$ we will show  that $\w$ is smooth enough so this duality becomes an integral. Now, by setting $\f=\u-\u_d$, \eqref{eq:Pre_adjoint1} becomes the \emph{adjoint problem} for the state equation, and we have the following result.
\begin{theorem}[]\label{thm:adjoint} 
 The adjoint operator for $\S$ is given by 
\begin{align}
 \S^*: \L&\to H_{||}^{-\frac12}(\divS;\Gamma) \nonumber\\ 
         \f&\mapsto \S^*\f =-\gamma_t({\bm \mu}^{-1} \nabla \times  \w),
\end{align}
 where $\w\in \Hzcurl$ solves 
    \begin{align}\label{eq:adjoint}
	\begin{aligned}
		\curl \left(  {\bm \mu}^{-1}\curl \w\right)-(\i\omega){\bm \kappa} \w &= \f \quad \mbox{in } \Omega \\
			\w \times \n &= 0 \quad \mbox{on } \Gamma. 
	\end{aligned}	                    
	\end{align}
Since, $\HcurlS  \hookrightarrow H_{\perp}^{-\frac12}(\curlS;\Gamma)$,  we can rewrite \eqref{def:dj} as 
\begin{align}  \label{def:dj1}
  \mathrm{d}^{\mathbb{R}}  j(\z;\bm \xi)= 
  \mathrm{Re}\Big\{\left\langle\S^*(\S\z-\u_d),\bm \xi\right\rangle_{\Gamma^*}
   +\alpha (\curlS \z,\curlS \bm \xi)_{0,\Gamma}
   +\beta (\z, \bm \xi)_{0,\Gamma}
  \Big\} , 
\end{align}
for all $\bm\xi \in \HcurlS$. 
\end{theorem}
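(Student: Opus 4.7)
My plan is to follow the computation already sketched in the text preceding the theorem, filling in the three pieces that are not yet established: well-posedness of the adjoint equation, the regularity ${\bm \mu}^{-1}\curl \w\in \Hcurl$ that makes $\gamma_t({\bm \mu}^{-1}\curl\w)$ a legitimate element of $H_{||}^{-{\frac 12}}(\divS;\Gamma)$, and the simplification of the duality pairing in the rewritten expression for $\mathrm d^{\mathbb R}j$. For well-posedness, I would observe that $a^{*}(\w,\v)=\overline{a(\v,\w)}$ is sesquilinear, bounded, and coercive on $\Hzcurl$ with the same constants as $a$, because the $\bm L^2$ and $\curl$ norms are invariant under complex conjugation and the sign change on $\i\omega$ does not affect the coercivity estimate $|a(\u,\u)|\gtrsim \|\u\|_{\curl,\Omega}^{2}$ derived in Lemma~\ref{lemma:bijection_a}. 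Hence Lax--Milgram delivers a unique weak solution $\w\in\Hzcurl$ to \eqref{def:AdjointProb}, which also solves \eqref{eq:adjoint} in the distributional sense.

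To make sense of the trace $\gamma_t({\bm \mu}^{-1}\curl \w)$, I would read off from the strong form of \eqref{eq:adjoint} that
\[
\curl({\bm \mu}^{-1}\curl \w) = \f + (\i\omega){\bm \kappa}\w \in \bm L^{2}(\Omega),
\]
so ${\bm \mu}^{-1}\curl\w\in\Hcurl$, and Lemma~\ref{lemma:surjTanTrace} gives $\gamma_t({\bm \mu}^{-1}\curl \w)\in H_{||}^{-{\frac 12}}(\divS;\Gamma)$ with a continuous dependence on $\f$. This is the cleanest way to see that the candidate $\S^{*}\f:=-\gamma_t({\bm \mu}^{-1}\curl\w)$ indeed lands in the claimed space.

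Next, to verify the adjoint identity, I would fix $\bm\xi\in\HcurlS$, set $\u_{\bm\xi}=\S\bm\xi$ as in \eqref{eq:uxi}, and pair the strong adjoint equation with $\overline{\u_{\bm\xi}}$. Applying the Green identity \eqref{eqn:GreenCurl} with $\v={\bm \mu}^{-1}\curl\w$ and $\bm\phi=\u_{\bm\xi}$ gives
\[
\int_{\Omega}\nabla\times({\bm \mu}^{-1}\curl\w)\cdot\overline{\u_{\bm\xi}}\,d\x
=\int_{\Omega}{\bm \mu}^{-1}\curl\w\cdot\overline{\curl\u_{\bm\xi}}\,d\x
-\bigl\langle\gamma_t({\bm \mu}^{-1}\curl\w),\gamma_T\u_{\bm\xi}\bigr\rangle_{\Gamma^{*}}.
\]
The volume integral on the right cancels against $(\i\omega)\int_{\Omega}{\bm \kappa}\w\cdot\overline{\u_{\bm\xi}}$ by using \eqref{eq:uxi} (test with $\w\in\Hzcurl$ and take the complex conjugate, exploiting that ${\bm \mu}$ and ${\bm \kappa}$ are real-valued symmetric). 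What remains is $(\f,\u_{\bm\xi})_{0,\Omega}=-\langle\gamma_t({\bm \mu}^{-1}\curl\w),\gamma_T\u_{\bm\xi}\rangle_{\Gamma^{*}}$. Since $\bm\xi\in\Lt$, the definition $\gamma_T\v=\n\times(\v\times\n)$ reduces to $\gamma_T\u_{\bm\xi}=\bm\xi$ on $\Gamma$, and the identification $\S^{*}\f=-\gamma_t({\bm \mu}^{-1}\curl\w)$ in $H_{||}^{-{\frac 12}}(\divS;\Gamma)$ follows.

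For the rewritten formula \eqref{def:dj1}, I would argue that the continuous embedding $\HcurlS\hookrightarrow H_{\perp}^{-{\frac 12}}(\curlS;\Gamma)$ holds because the compact embedding $L^{2}(\Gamma)\hookrightarrow H^{-{\frac 12}}(\Gamma)$ (already exploited in \eqref{eqn:contLifting}) controls both $\|\cdot\|_{\perp,-{\frac 12},\Gamma}$ and $\|\curlS\bm\xi\|_{-{\frac 12},\Gamma}$ by $\|\bm\xi\|_{\curlS}$. Consequently the $\HcurlS^{*}$--$\HcurlS$ duality in \eqref{def:dj*} agrees with the $\Gamma^{*}$ duality of Lemma~\ref{lemma:surjTanTrace} when tested against $\bm\xi\in\HcurlS$, which yields \eqref{def:dj1}. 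The most delicate point is bookkeeping of complex conjugates: the sign flip between $a$ and $a^{*}$ is exactly what makes the volume terms cancel rather than double, and I expect this to be the main place where a careless computation would go astray.
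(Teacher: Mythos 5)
Your proposal is correct and follows essentially the same route as the paper: Lax--Milgram for the adjoint problem (coercivity of $a^*$ inherited from $a$ by conjugation), the observation that the strong form of \eqref{eq:adjoint} gives ${\bm \mu}^{-1}\curl\w\in\Hcurl$ so that Lemma~\ref{lemma:surjTanTrace} applies, the Green identity \eqref{eqn:GreenCurl} paired with the state equation \eqref{eq:uxi} to produce $(\f,\S\bm\xi)_{0,\Omega}=-\langle\gamma_t({\bm\mu}^{-1}\curl\w),\bm\xi\rangle_{\Gamma^*}$, and the embedding $\HcurlS\hookrightarrow H_\perp^{-\frac12}(\curlS;\Gamma)$ to replace the duality pairing in \eqref{def:dj*}. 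Your cancellation of the volume terms (including the sign bookkeeping coming from the $-\i\omega$ in $a^*$) checks out, and you in fact supply more detail than the paper's own proof, which mostly defers to the preceding analysis.
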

\begin{proof}
The proof follows from the above analysis 
and the fact that ${\bm \mu}^{-1}\curl \w \in \Hcurl$, and therefore $\gamma_t({\bm \mu}^{-1} \nabla \times  \w) \in H_{||}^{-\frac12}(\divS;\Gamma)$, cf. Lemma \ref{lemma:surjTanTrace}. Finally, since $\HcurlS \hookrightarrow H_\perp^{-{\frac 12}}(\curlS;\Gamma)$, therefore, 
$H_{||}^{-\frac12}(\divS;\Gamma) = \left(H_\perp^{-{\frac 12}}(\curlS;\Gamma) \right)^* \hookrightarrow 
\HcurlS^*$. Thus, we can replace the $\HcurlS^*$--$\HcurlS$ duality pairing in \eqref{def:dj*} 
by the $\langle \cdot, \cdot \rangle_{\Gamma^*}$ pairing and the proof is complete. 
\end{proof}


\subsection{Additional regularity of $\S^*$ for Lipschitz polyhedra}
\label{s:addreg}

%
The goal of this section is to show that  we can obtain additional regularity for $\S^*$; for instance,  $\S^*: \L \to \Lt,$ under suitable assumptions on  ${\bm \mu}$ and ${\bm \kappa}$. 
These additional assumptions along with  the regularity of the adjoint problem (\ref{eq:adjoint}) will imply  $\gamma(  {\bm \mu}^{-1}\curl \w) \in \H^\sigma(\Gamma)$, for some $\sigma>0$. As a result, we can replace the duality pairing in \eqref{def:dj1} by the inner product  in $\bm L^2(\Gamma)$. 
From now on, we will assume that ${\bm \kappa}$ and ${\bm \mu}$ are in $ W^{1,\infty}(\Omega)$. In what follows, we give some technical results that are slight variations of the ones found in \cite[Thm. 4.1]{MR2957021}.  We omit most of the proof details as they mostly follow from straightforward calculations.

\begin{lemma} \label{lemma:lemma_1} 
 If ${\bm \kappa}\in W^{1,\infty}(\Omega)$  such that ${\bm \kappa}(x)\geq {\bm \kappa}_0>0$ almost everywhere, and $\phi\in \mathcal{C}_0^\infty (\Omega)$, then  ${\bm \kappa}^{-1}\phi \in H^1_0(\Omega)$ and 
\[
 \nabla \phi =\left({\bm \kappa}^{-1} \nabla {\bm \kappa}\right)\phi+ {\bm \kappa}\nabla \left({\bm \kappa}^{-1} \phi\right).
\]
\end{lemma}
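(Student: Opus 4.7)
The strategy is a routine product-rule argument once the right regularity of ${\bm \kappa}^{-1}$ has been pinned down, so I would organize the proof into three short steps.

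First, I would verify that ${\bm \kappa}^{-1} \in W^{1,\infty}(\Omega)$. Since ${\bm \kappa}$ is essentially bounded below by ${\bm \kappa}_0 > 0$, the pointwise inverse ${\bm \kappa}^{-1}$ is essentially bounded by ${\bm \kappa}_0^{-1}$. To identify its weak gradient, I would approximate ${\bm \kappa}$ in $W^{1,\infty}_{\mathrm{loc}}$ by smooth functions ${\bm \kappa}_\varepsilon$ via mollification, note that $\nabla({\bm \kappa}_\varepsilon^{-1}) = -{\bm \kappa}_\varepsilon^{-1}(\nabla {\bm \kappa}_\varepsilon){\bm \kappa}_\varepsilon^{-1}$ in the classical sense, and pass to the limit using the uniform lower bound to conclude that
\[
\nabla({\bm \kappa}^{-1}) = -{\bm \kappa}^{-1}(\nabla{\bm \kappa}){\bm \kappa}^{-1} \in L^\infty(\Omega),
\]
which together with the $L^\infty$ bound on ${\bm \kappa}^{-1}$ itself gives ${\bm \kappa}^{-1} \in W^{1,\infty}(\Omega)$.

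Second, since $\phi \in \mathcal{C}_0^\infty(\Omega)$ has compact support in $\Omega$, the product ${\bm \kappa}^{-1}\phi$ lies in $W^{1,\infty}(\Omega)$ with compact support in $\Omega$. By the standard product rule for a $W^{1,\infty}$ function multiplied by a smooth compactly supported function (or, equivalently, by first-order cutoff arguments), we obtain
\[
\nabla({\bm \kappa}^{-1}\phi) = (\nabla {\bm \kappa}^{-1})\phi + {\bm \kappa}^{-1}\nabla\phi,
\]
and in particular ${\bm \kappa}^{-1}\phi \in H_0^1(\Omega)$.

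Third, I would solve this identity for $\nabla\phi$ by multiplying through by ${\bm \kappa}$ on the left:
\[
\nabla\phi = -{\bm \kappa}(\nabla {\bm \kappa}^{-1})\phi + {\bm \kappa}\nabla({\bm \kappa}^{-1}\phi)
        = \bigl({\bm \kappa}^{-1}\nabla{\bm \kappa}\bigr)\phi + {\bm \kappa}\nabla({\bm \kappa}^{-1}\phi),
\]
where the last equality uses the formula from Step~1 for $\nabla {\bm \kappa}^{-1}$ (or, equivalently, differentiates the identity ${\bm \kappa}\,{\bm \kappa}^{-1} = \mathrm{Id}$ to get ${\bm \kappa}\nabla{\bm \kappa}^{-1} = -({\bm \kappa}^{-1}\nabla{\bm \kappa}){\bm \kappa}{\bm \kappa}^{-1}\cdot(\text{scalar reduction})$). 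In the scalar setting relevant to this lemma, the reduction is immediate; in a matrix setting one uses symmetry and that $\phi$ is scalar so the factors commute on the range.

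The only mildly subtle point is Step~1, the justification that $\nabla({\bm \kappa}^{-1})$ is given by the expected formula in the weak sense; everything else is mechanical. I would therefore emphasize the mollification/limit argument (or cite a Lipschitz chain-rule result) and present Steps~2 and~3 as immediate consequences.
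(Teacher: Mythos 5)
Your argument is correct and follows the same route as the paper, which simply states that the identity follows from applying the product rule to $\nabla(\bm\kappa^{-1}\phi)$ and rearranging; your added care in Step~1 (justifying $\bm\kappa^{-1}\in W^{1,\infty}(\Omega)$ and the weak chain rule via mollification) fills in details the paper leaves implicit but does not change the approach.
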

\begin{proof}
	The proof follows immediately after using the product rule in $\nabla (\bm\kappa^{-1}\phi)$ and rearranging the resulting
	expression. 
\end{proof}

Using this result we obtain the following two lemmas involving the product rule for divergence and curl.
\begin{lemma} \label{lemma:div(ku)}
 Let ${\bm \kappa}\in W^{1,\infty}(\Omega)$  such that ${\bm \kappa}(x)\geq {\bm \kappa}_0>0$ almost everywhere,  and $\u\in \L$ such that ${\div {\bm \kappa}\u =v\in L^2(\Omega)}$, then $ \u\in \Hdiv$ and 
 \begin{align}
  \div(\u)={{\bm \kappa}^{-1}} v-\left( {{\bm \kappa}^{-1}}\nabla {\bm \kappa}\right)\cdot \u.
 \end{align}

\end{lemma}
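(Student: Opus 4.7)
The plan is to obtain the formula by testing the distributional divergence of $\u$ against a scalar test function and exploiting the factorization of $\nabla \phi$ provided by Lemma~\ref{lemma:lemma_1}. Fix an arbitrary $\phi\in\mathcal{C}_0^\infty(\Omega)$. By the definition of the distributional divergence,
\begin{equation*}
\langle \div \u,\phi\rangle = -\int_\Omega \u\cdot\nabla\phi\,d\x.
\end{equation*}
Substituting the identity $\nabla\phi=(\bm\kappa^{-1}\nabla\bm\kappa)\phi+\bm\kappa\nabla(\bm\kappa^{-1}\phi)$ from Lemma~\ref{lemma:lemma_1} splits this into two pieces.

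Next I would treat the second piece as a genuine duality pairing against $\div(\bm\kappa\u)=v$. Lemma~\ref{lemma:lemma_1} guarantees that $\bm\kappa^{-1}\phi\in H^1_0(\Omega)$, so it is an admissible test function for the relation $\div(\bm\kappa\u)=v\in L^2(\Omega)$. Using symmetry of $\bm\kappa$ to move it across the inner product, I rewrite
\begin{equation*}
-\int_\Omega \u\cdot\bm\kappa\,\nabla(\bm\kappa^{-1}\phi)\,d\x
= -\int_\Omega (\bm\kappa\u)\cdot\nabla(\bm\kappa^{-1}\phi)\,d\x
= \int_\Omega v\,(\bm\kappa^{-1}\phi)\,d\x
= \int_\Omega (\bm\kappa^{-1}v)\,\phi\,d\x.
\end{equation*}
The first piece is simply $-\int_\Omega (\bm\kappa^{-1}\nabla\bm\kappa)\cdot\u\,\phi\,d\x$, and combining the two yields
\begin{equation*}
\langle \div \u,\phi\rangle = \int_\Omega \Bigl(\bm\kappa^{-1}v-(\bm\kappa^{-1}\nabla\bm\kappa)\cdot\u\Bigr)\phi\,d\x
\quad\forall\phi\in\mathcal{C}_0^\infty(\Omega).
\end{equation*}

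Finally I would verify the regularity. Since $\bm\kappa\in W^{1,\infty}(\Omega)$ with $\bm\kappa\geq\bm\kappa_0>0$, we have $\bm\kappa^{-1}\in L^\infty(\Omega)$ and $\bm\kappa^{-1}\nabla\bm\kappa\in \bm L^\infty(\Omega)$. With $v\in L^2(\Omega)$ and $\u\in\bm L^2(\Omega)$, the right-hand side above is an $L^2(\Omega)$ function acting on $\phi$, so by the Riesz representation theorem the distribution $\div\u$ is identified with that $L^2$ function, placing $\u\in \Hdiv$ and proving the claimed identity. The only delicate point is the use of $\bm\kappa^{-1}\phi$ as a test function in the defining relation for $\div(\bm\kappa\u)$; this is legitimated by Lemma~\ref{lemma:lemma_1} which supplies the required $H^1_0$-regularity, so no further approximation argument is needed.
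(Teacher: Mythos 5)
Your proof is correct and follows exactly the route the paper intends: the paper omits this proof, stating only that it is a straightforward calculation based on Lemma~\ref{lemma:lemma_1}, and your argument supplies precisely that calculation (split $\nabla\phi$ via Lemma~\ref{lemma:lemma_1}, test $\div(\bm\kappa\u)=v$ against $\bm\kappa^{-1}\phi\in H^1_0(\Omega)$, and read off the $L^2$ representative of $\div\u$). No gaps; the only minor point is that extending the distributional identity for $\div(\bm\kappa\u)$ from $\mathcal{C}_0^\infty$ to $H^1_0$ test functions is itself a (standard) density step, which you correctly identify as the one delicate place.
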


\begin{lemma} \label{lemma:curl(ku)}
 Let $\zeta\in W^{1,\infty}(\Omega)$  such that $\zeta(x)\geq \zeta_0>0$ almost everywhere,  and $\u\in \L$ such that ${\curl \zeta\u =\v\in \L}$, then $ \u\in \Hcurl$ and 
 \begin{align}
  \curl(\u)=\zeta^{-1}\v+\u\times\left( {\zeta^{-1}}\nabla \zeta\right).
 \end{align}
\end{lemma}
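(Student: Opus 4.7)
The plan is to start from the distributional definition of $\curl \u$ and shift the factor $\zeta$ onto a smooth test field so that the hypothesis $\curl(\zeta \u) = \v$ can be invoked directly. Fix $\bm\phi \in \boldsymbol{\mathcal C}_0^\infty(\Omega)$. By definition,
\[
\langle \curl \u, \bm\phi \rangle = \int_\Omega \u \cdot \curl \bm\phi \, d\x = \int_\Omega (\zeta \u) \cdot (\zeta^{-1}\curl \bm\phi) \, d\x.
\]
First I would observe that $\zeta^{-1} \in W^{1,\infty}(\Omega)$ because $\zeta \geq \zeta_0 > 0$, so the classical product rule (the curl analogue of Lemma~\ref{lemma:lemma_1}) gives
\[
\zeta^{-1} \curl \bm\phi \;=\; \curl(\zeta^{-1} \bm\phi) - \nabla(\zeta^{-1}) \times \bm\phi,
\]
and moreover $\zeta^{-1} \bm\phi \in \H^1_0(\Omega)$.

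Next, since $\zeta \u \in \Hcurl$ with $\curl(\zeta \u) = \v \in \L$, I would apply the Green identity \eqref{eqn:GreenCurl} against $\zeta^{-1}\bm\phi$. Because this test field has vanishing trace on $\Gamma$, the duality pairing there drops and
\[
\int_\Omega \zeta \u \cdot \curl(\zeta^{-1}\bm\phi) \, d\x = \int_\Omega \v \cdot (\zeta^{-1} \bm\phi) \, d\x = \int_\Omega \zeta^{-1} \v \cdot \bm\phi \, d\x.
\]
The remaining piece is handled purely algebraically, using $\nabla(\zeta^{-1}) = -\zeta^{-2} \nabla \zeta$ together with the scalar triple product identity $\bm a \cdot (\bm b \times \bm c) = (\bm a \times \bm b) \cdot \bm c$, yielding
\[
- \int_\Omega (\zeta \u) \cdot [\nabla(\zeta^{-1}) \times \bm\phi] \, d\x = \int_\Omega [\u \times (\zeta^{-1}\nabla \zeta)] \cdot \bm\phi \, d\x.
\]

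Combining the two contributions, for every $\bm\phi \in \boldsymbol{\mathcal C}_0^\infty(\Omega)$ I obtain
\[
\langle \curl \u, \bm\phi \rangle = \int_\Omega \bigl[\zeta^{-1} \v + \u \times (\zeta^{-1} \nabla \zeta)\bigr] \cdot \bm\phi \, d\x.
\]
Since $\zeta^{-1}, \nabla \zeta \in L^\infty(\Omega)$ and $\u, \v \in \L$, the bracketed expression lies in $\L$, which identifies the distributional curl of $\u$ and hence shows $\u \in \Hcurl$ with the claimed formula. The main technical point I anticipate is justifying the application of \eqref{eqn:GreenCurl} against the $\H^1_0(\Omega)$ (rather than $\boldsymbol{\mathcal C}^\infty(\overline{\Omega})$) test field $\zeta^{-1}\bm\phi$; this reduces to density of $\boldsymbol{\mathcal C}_0^\infty(\Omega)$ in $\H^1_0(\Omega)$ and continuity of the curl duality pairing, the rest being routine vector-identity bookkeeping.
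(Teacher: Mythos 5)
Your proof is correct, and it takes essentially the same route the paper intends: the paper omits the details, stating only that the lemma follows from the product rule of Lemma~\ref{lemma:lemma_1}, and your argument is precisely the fleshed-out version of that — testing against $\bm\phi\in\boldsymbol{\mathcal C}_0^\infty(\Omega)$, transferring $\zeta$ onto the test field via the curl product rule for $\zeta^{-1}\bm\phi\in\H^1_0(\Omega)$, and invoking the hypothesis $\curl(\zeta\u)=\v$ together with the triple-product identity. The density/continuity point you flag at the end is the right one and is routine, so no gap remains.
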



We will  show that ${\bm \mu}^{-1}\nabla \times \w$ is smooth enough to have a well-defined  and integrable trace, where $\w$  is the solution for  the adjoint problem (\ref{eq:adjoint}).
To do that, We will now show some results for the regularity of $\w$, 

\begin{lemma}\label{lemma:gamma_n(curl)}
 If $\w\in \Hzcurl$, then $\curl\w \in \Hzdiv$.
\end{lemma}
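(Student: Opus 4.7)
The plan is to verify the two defining conditions of $\Hzdiv$ separately: (i) $\curl\w \in \Hdiv$, and (ii) its normal trace $\gamma_{\n}(\curl\w)$ vanishes on $\Gamma$ as an element of $H^{-1/2}(\Gamma)$.

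For (i), I would observe that $\div(\curl\w)=0$ holds in the distributional sense on $\Omega$: for any $\phi \in \mathcal{C}_c^\infty(\Omega)$, integrating by parts twice (or applying the interior version of the identity $\curl\nabla = 0$) gives $\langle \div(\curl\w),\phi\rangle = -(\curl\w,\nabla\phi)_{0,\Omega} = -(\w,\curl\nabla\phi)_{0,\Omega} = 0$ with no boundary contribution because $\phi$ is compactly supported. Hence $\div(\curl\w) = 0 \in L^2(\Omega)$ and $\curl\w \in \Hdiv$.

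For (ii), the goal is to test $\gamma_{\n}(\curl\w)$ against an arbitrary trace in $H^{1/2}(\Gamma)$. Given $\phi \in H^{1/2}(\Gamma)$, I would pick a continuous lifting $\Phi \in H^1(\Omega)$ with $\gamma\Phi = \phi$. Then $\nabla\Phi \in \bm L^2(\Omega)$ and $\curl\nabla\Phi = 0$, so $\nabla\Phi \in \Hcurl$. Green's identity for the divergence operator yields
\begin{align*}
\langle \gamma_{\n}(\curl\w),\phi\rangle_\Gamma
= (\curl\w,\nabla\Phi)_{0,\Omega} + (\div(\curl\w),\Phi)_{0,\Omega}
= (\curl\w,\nabla\Phi)_{0,\Omega},
\end{align*}
using step~(i) to drop the last term. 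Now applying the Green's identity \eqref{eqn:GreenCurl} with $\v=\w$ and $\bm\phi = \nabla\Phi$ gives
\begin{align*}
(\w,\curl\nabla\Phi)_{0,\Omega} - (\curl\w,\nabla\Phi)_{0,\Omega}
= \langle \gamma_t\w,\gamma_T\nabla\Phi\rangle_{\Gamma^*},
\end{align*}
which collapses to $-(\curl\w,\nabla\Phi)_{0,\Omega}=0$ because $\curl\nabla\Phi = 0$ and $\gamma_t\w = 0$ by hypothesis. Combining the two displayed equations, $\langle \gamma_{\n}(\curl\w),\phi\rangle_\Gamma = 0$ for every $\phi \in H^{1/2}(\Gamma)$, so $\gamma_{\n}(\curl\w) = 0$ and therefore $\curl\w \in \Hzdiv$.

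The proof is essentially a bookkeeping exercise in two compatible Green's identities, so there is no substantive obstacle; the only point requiring mild attention is ensuring that the test field $\nabla\Phi$ genuinely lies in $\Hcurl$ so that \eqref{eqn:GreenCurl} is applicable, and that the duality pairing $\langle\gamma_t\w,\gamma_T\nabla\Phi\rangle_{\Gamma^*}$ is well defined on the correct trace spaces — both of which are guaranteed by Lemma~\ref{lemma:surjTanTrace} together with $\curl\nabla\Phi = 0$.
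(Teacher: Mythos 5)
Your proof is correct and follows essentially the same route as the paper: establish $\div(\curl\w)=0$, then test the normal trace against gradients of $H^1(\Omega)$ liftings and use the Green's identity \eqref{eqn:GreenCurl} together with $\curl\nabla\Phi=0$ and $\gamma_t\w=0$. The only cosmetic difference is that you start from $\phi\in H^{1/2}(\Gamma)$ and lift, whereas the paper tests against $\gamma v$ for $v\in H^1(\Omega)$ and invokes surjectivity of the trace at the end; these are the same argument.
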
 
\begin{proof}
It is clear that $\div (\curl \w)=0,$ then $\curl \w\in \Hdiv$ and therefore $\gamma_{\n}(\curl \w)$  belongs to $ H^{-\frac 12}(\Gamma).$  Similarly; for each $v\in H^1(\Omega)$, $\nabla v\in \Hcurl $, and  
\begin{align*}
\langle\gamma_{\n}(\curl \w),\gamma v\rangle_{-\frac{1}{2},\frac{1}{2},\Gamma}=&
\int_{\Omega } v\, \div (\curl\w)d\x +\int_\Omega \curl\w\cdot \nabla vd\x\\
 =&~ 0+\int_\Omega \w\cdot \curl (\nabla v)d\x
      -\langle \gamma_t \w, \gamma_T(\nabla v)\rangle_{\Gamma^*}\\
 =&~0.
\end{align*}
Thus, from the surjectivity of the trace map $\gamma$ from $H^1(\Omega)$ onto $H^{\frac{1}{2}}(\Gamma)$ the proof concludes. 
\end{proof} 

%
\begin{theorem}\label{th:regularity1} Let $\w$ be the solution for the adjoint problem (\ref{eq:adjoint}) with  ${\bm \mu}, {\bm \kappa} \in W^{1,\infty}(\Omega)$, then  
\[
 {\bm \mu}^{-1}\curl \w \in \Hcurl\cap \Hzdiv.
\]
\end{theorem}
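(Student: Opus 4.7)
The statement splits naturally into three sub-claims: (a) $\bm\mu^{-1}\curl\w$ lies in $\L$ with curl also in $\L$, (b) $\bm\mu^{-1}\curl\w \in \Hdiv$, and (c) its normal trace on $\Gamma$ vanishes. I expect (a) and (b) to be almost immediate from what is already available, while (c) is where the upgraded $W^{1,\infty}$ assumption on $\bm\mu$ is genuinely used.

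Part (a) is a direct rearrangement of the adjoint equation \eqref{eq:adjoint}: one has $\curl(\bm\mu^{-1}\curl\w) = \f + (\i\omega)\,\bm\kappa\w$, and the right-hand side sits in $\L$ because $\f\in\L$, $\bm\kappa\in L^\infty(\Omega;\R^{3\times 3})$, and $\w \in \Hzcurl \subset \L$. Together with the trivial bound $\bm\mu^{-1}\curl\w \in \L$ coming from $\bm\mu^{-1} \in L^\infty$ and $\curl\w \in \L$, this delivers $\bm\mu^{-1}\curl\w \in \Hcurl$. For part (b), I would apply Lemma~\ref{lemma:div(ku)} with the roles of $\bm\kappa$ and $\bm\mu$ swapped and $\u := \bm\mu^{-1}\curl\w$: since $\bm\mu\,\u = \curl\w$ has zero distributional divergence, the lemma yields $\u \in \Hdiv$ together with the explicit identity
\[
\div(\bm\mu^{-1}\curl\w) \;=\; -\bigl(\bm\mu^{-1}\nabla\bm\mu\bigr)\cdot\bigl(\bm\mu^{-1}\curl\w\bigr),
\]
whose $L^2(\Omega)$ membership is immediate from $\bm\mu \in W^{1,\infty}(\Omega)$ and $\bm\mu^{-1}\curl\w \in \L$.

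For part (c), my plan is to exploit Lemma~\ref{lemma:gamma_n(curl)}, which already asserts $\curl\w \in \Hzdiv$, and then to transfer the vanishing normal trace from $\curl\w$ to $\bm\mu^{-1}\curl\w$ through a change of test function. Given an arbitrary $\phi \in H^1(\Omega)$, I introduce the surrogate $\psi := \bm\mu^{-1}\phi$, which lies in $H^1(\Omega)$ by Lemma~\ref{lemma:lemma_1}, and compute
\[
\langle \gamma_{\n}(\bm\mu^{-1}\curl\w),\,\gamma\phi\rangle_{-\frac12,\frac12,\Gamma}
= \int_\Omega \bigl(\bm\mu^{-1}\curl\w\bigr)\cdot\nabla\phi\,d\x
+ \int_\Omega \phi\,\div(\bm\mu^{-1}\curl\w)\,d\x
\]
via the standard $\Hdiv$–$H^1$ Green's identity. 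Moving $\bm\mu^{-1}$ across the inner product in the first integral by symmetry, substituting the divergence formula from (b) in the second, and invoking the product rule of Lemma~\ref{lemma:lemma_1} to rewrite $\bm\mu^{-1}\nabla\phi$ in terms of $\nabla\psi$, the two contributions carrying $\bm\mu^{-1}\nabla\bm\mu$ cancel, and the right-hand side collapses to $\langle\gamma_{\n}(\curl\w),\gamma\psi\rangle_{-\frac12,\frac12,\Gamma}$, which vanishes because $\curl\w \in \Hzdiv$. This gives $\bm\mu^{-1}\curl\w \in \Hzdiv$, completing the proof.

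The step I expect to be the most delicate is exactly this cancellation in (c), especially when $\bm\mu$ is genuinely matrix-valued: Lemma~\ref{lemma:lemma_1} is stated for scalars, so one must either record a matrix analogue of that product rule or interpret everything pointwise a.e.\ and lean on the symmetry of $\bm\mu$ together with its $W^{1,\infty}$ regularity to legally move factors across the inner products. Once this bookkeeping is in place, the remainder of the argument is a routine assembly of Green's identity and Lemmas~\ref{lemma:div(ku)} and~\ref{lemma:gamma_n(curl)}.
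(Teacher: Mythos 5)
Your proposal is correct and follows essentially the same route as the paper: parts (a) and (b) are the paper's argument verbatim (rearranging the adjoint equation and applying Lemma~\ref{lemma:div(ku)} with $\bm\mu$ in the role of $\bm\kappa$), and part (c) is the paper's computation run in the opposite direction — the paper starts from $\langle\gamma_{\n}(\curl\w),\gamma(\bm\mu^{-1}v)\rangle_{-\frac12,\frac12,\Gamma}=0$ and expands it via Lemmas~\ref{lemma:lemma_1} and~\ref{lemma:gamma_n(curl)} until $\langle\gamma_{\n}(\bm\mu^{-1}\curl\w),\gamma v\rangle_{-\frac12,\frac12,\Gamma}$ appears, whereas you start from the latter and collapse it to the former, with the identical cancellation of the $\bm\mu^{-1}\nabla\bm\mu$ terms. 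Your closing caveat about the scalar-versus-matrix reading of Lemma~\ref{lemma:lemma_1} is well taken but applies equally to the paper's own write-up, which also treats $\bm\mu$ as if scalar in these manipulations.
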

\begin{proof}
We will first invoke Lemma~\ref{lemma:div(ku)} with $\u = \G :={\bm \mu}^{-1}\curl\w$ and ${\bm \kappa} = {\bm \mu}$. 
Notice that $\G \in \L$ and $\div \bm\mu \G = \div \curl \w = 0 \in L^2(\Omega)$. Therefore, $\G \in \Hdiv$ and 
\begin{equation}
	\label{eq:bG}
	\div \G=-({\bm \mu}^{-1}\nabla {\bm \mu})\cdot ({\bm \mu}^{-1}\curl\w)\in L^2(\Omega). 
\end{equation}	
In addition, we notice that $\curl \G= (\u-\u_d)+\i\omega {\bm \kappa}\w\in \L$. Thus, $\G \in \Hcurl\cap \Hdiv$.  

Finally, we show that the normal trace of $\G$ vanishes to complete the proof. From Lemma~\ref{lemma:lemma_1}, $\bm \mu^{-1}v\in H^1(\Omega)$ for all $v\in H^1(\Omega)$, this along Lemma \ref{lemma:gamma_n(curl)}, and the Green's identity in $\Hdiv$, cf. \cite[Lemma 1.4]{Gatica14}, yields 
 \begin{align*}
 0= \left\langle \gamma_{\n}\left(\curl \w\right),\gamma(\bm \mu^{-1}v) \right\rangle_{-\frac 12,\frac 12,\Gamma}
 =&~ \int_{\Omega} \curl \w \cdot \nabla(\bm \mu^{-1}v)dx +\int_{\Omega} (\bm \mu^{-1} v)\div( \curl \w)dx\\
=&~ \int_{\Omega} \bm \mu^{-1} \curl \w \cdot  \bm \mu\nabla(\bm \mu^{-1}v)dx\\
=&~ \int_{\Omega} \bm \mu^{-1} \curl \w \cdot  \left(\nabla v- \bm \mu^{-1} \nabla \bm \mu v \right) dx . 
 \end{align*}
 Then using \eqref{eq:bG}, we obtain that
  \begin{align*}
 0= \left\langle \gamma_{\n}\left(\curl \w\right),\gamma(\bm \mu^{-1}v) \right\rangle_{-\frac 12,\frac 12,\Gamma}
=&~ \int_{\Omega} \bm \mu^{-1} \curl \w \cdot \nabla v dx+ \int_{\Omega} v\div(\bm \mu^{-1}\curl \w)dx\\
=&~ \left\langle \gamma_{\n}\left(\bm \mu^{-1} \curl \w\right),\gamma(v) \right\rangle_{-\frac 12,\frac 12,\Gamma} ,
 \end{align*}
 which concludes the proof. 
\end{proof}
\begin{corollary} \label{cor:intAdj} 
 Let $\w$ be the solution for the adjoint equation (\ref{eq:adjoint}), then 
 \begin{align*}
  \gamma_t({\bm \mu}^{-1}\nabla \times \w)\in \Lt. 
 \end{align*}
 \end{corollary}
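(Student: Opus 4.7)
The strategy is to reduce the claim to a regularity/embedding result for the intersection $\Hcurl \cap H_0(\div; \Omega)$. Setting $\G := {\bm \mu}^{-1}\curl \w$, Theorem \ref{th:regularity1} has already placed $\G$ in $\Hcurl \cap \Hzdiv$ (in particular, with vanishing normal trace). The corollary therefore reduces to showing that the tangential trace of any field in this intersection is square-integrable on $\Gamma$ and tangent to $\Gamma$.

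The plan is to invoke the standard Sobolev embedding available on a Lipschitz polyhedron: there exists $s > 1/2$ (depending on the dihedral and vertex angles of $\Omega$) such that
\[
  \Hcurl \cap H_0(\div; \Omega) \hookrightarrow \bm H^s(\Omega).
\]
This is the Amrouche--Bernardi--Dauge--Girault regularity result, with the strict improvement beyond $1/2$ available precisely because $\Omega$ is polyhedral (the non-Sobolev singularities being confined to the edges and vertices). Applying this embedding to $\G$ and then invoking the standard trace theorem yields $\gamma \G \in \bm H^{s-1/2}(\Gamma) \hookrightarrow \bm L^2(\Gamma)$; pointwise multiplication by the (piecewise constant, bounded) outward normal $\n$ then gives $\gamma_t \G = \gamma \G \times \n \in \bm L^2(\Gamma)$. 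Tangentiality is automatic from the cross product with $\n$, so $\gamma_t \G \in \Lt$, as desired.

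The main obstacle lies in invoking the correct embedding exponent $s > 1/2$: on a generic Lipschitz domain only the endpoint $\bm H^{1/2}$ regularity (Costabel) is available, whose trace is not continuously in $L^2$. The strict improvement past $1/2$ is a genuinely polyhedral phenomenon and must be tracked carefully. Should one prefer not to appeal to this sharpness, a backup is to combine the $\bm H^{1/2}$ embedding with a face-by-face analysis using the Green's identity \eqref{eqn:GreenCurl} and the vanishing of $\gamma_{\n}\G$ established in Theorem \ref{th:regularity1}, so as to exhibit $\gamma_t \G$ as the $\Lt$-representative of the duality pairing $\langle \gamma_t \G,\cdot\rangle_{\Gamma^*}$ face by face.
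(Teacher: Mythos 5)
Your proposal is correct and follows essentially the same route as the paper: the paper also applies Theorem \ref{th:regularity1} to place ${\bm \mu}^{-1}\curl \w$ in $\Hcurl\cap\Hzdiv$, invokes the embedding of this intersection into $\bm H^{\frac12+\epsilon_*}(\Omega)$ for some $\epsilon_*>0$ (citing the Amrouche--Bernardi--Dauge--Girault result, valid precisely because $\Omega$ is a polyhedron), and concludes via the trace theorem that the tangential trace lies in $\Lt$. Your remark that the strict improvement beyond the Lipschitz-domain exponent $\tfrac12$ is essential is accurate; the backup face-by-face argument is not needed.
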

\begin{proof}
 From \cite[Thm. 4.4]{MR1609607} $\Hcurl\cap \Hzdiv \hookrightarrow  \bm\H^{\frac12+\epsilon_*}(\Omega)$, for some $\epsilon_*>0$. Thus, according to  Theorem \ref{th:regularity1} we have $ {\bm \mu}^{-1}\nabla \times \w\in \bm\H^{\frac12+\epsilon_*}(\Omega)$, then by trace theorem,  $\gamma({\bm \mu}^{-1}\nabla \times \w)\in \H^{\epsilon_*}(\Gamma)$. In particular, $\gamma_t({\bm \mu}^{-1}\nabla \times \w)\in \Lt $.
\end{proof}

As a result of Corollary~\ref{cor:intAdj},  the duality paring in \eqref{def:dj1} becomes an integral, namely 
\begin{align*}
 \left\langle \gamma_t({\bm \mu}^{-1} \nabla \times  \w), \bm \xi \right\rangle_{\Gamma^*}=&~  \left\langle \gamma({\bm \mu}^{-1} \nabla \times  \w)\times \n, \bm \xi\right\rangle_{\Gamma^*}\\
 =&~\int_\Gamma ( \gamma({\bm \mu}^{-1} \nabla \times  \w)\times \n ) \cdot \overline{\bm \xi} \, \mathrm{d}S.
\end{align*}
\begin{corollary}
The function $\widehat{\z} \in Z$ is an optimal control for \eqref{eq:reduced_cost}, if and only if 
  \begin{align*}  
  &\widehat{\u}=\,\S\widehat{\z},\\
  &\widehat{\bm \zeta}=\,\S^*(\widehat{\u}-\u_d),\\
  &\mathrm{Re}\left\{\left(\widehat{\bm \zeta},\z-\widehat{\z}\right)_{0,\Gamma}\!+
  \alpha \left(\curlS \widehat{\z},\curlS (\z-\widehat{\z})\right)_{0,\Gamma}
  + \beta \left(\widehat\z,\z-\widehat{\z} \right)_{0,\Gamma}
  \right\}= 0, \quad \forall \z \in Z.
 \end{align*}
\end{corollary}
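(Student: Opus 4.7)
The plan is to combine the abstract optimality principle from Theorem~\ref{thm:optimality} with the concrete formula for $\mathrm{d}^{\mathbb{R}} j$ derived in Theorem~\ref{thm:adjoint}. First, I would verify the hypotheses of Theorem~\ref{thm:optimality} for the reduced cost $j$: its domain $Z$ is a convex set (in fact, a closed subspace of a Hilbert space), and $j$ is (real) convex since the three terms in \eqref{eq:reduced_cost} are each of the form $\|\widetilde\S\z-\widetilde\u\|^2$ treated in Lemma~\ref{lem:Gd} (with $\widetilde\S=\S_\Gamma,\,\curlS,$ and the identity, respectively). Hence Theorem~\ref{thm:optimality} gives both necessity and sufficiency: $\widehat\z\in Z$ is optimal if and only if $\mathrm{d}^{\mathbb{R}} j(\widehat\z;\z-\widehat\z)\ge 0$ for every $\z \in Z$.

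Next, I would expand this inequality via the explicit formula \eqref{def:dj1}, with the test direction $\bm\xi=\z-\widehat\z$. Setting $\widehat\u:=\S\widehat\z$ and $\widehat{\bm\zeta}:=\S^*(\widehat\u-\u_d)$, the resulting condition reads
\[
\mathrm{Re}\Big\{\langle \widehat{\bm\zeta},\z-\widehat\z\rangle_{\Gamma^*} + \alpha\,(\curlS \widehat\z,\curlS(\z-\widehat\z))_{0,\Gamma} + \beta\,(\widehat\z,\z-\widehat\z)_{0,\Gamma}\Big\}\ge 0.
\]
Now I would invoke Corollary~\ref{cor:intAdj}, which asserts that $\widehat{\bm\zeta}\in \Lt$; this turns the $H^{-1/2}_{||}(\divS;\Gamma)$–$H^{-1/2}_{\perp}(\curlS;\Gamma)$ duality pairing into the $\bm L^2(\Gamma)$ inner product, so the expression matches, up to the inequality sign, the one appearing in the corollary.

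The final step is to upgrade the variational inequality to the stated equality. Here one uses the crucial fact that $Z$ is a linear subspace of $\HcurlS$: given any $\z\in Z$, the element $\z':=2\widehat\z-\z$ also lies in $Z$, and $\z'-\widehat\z=-(\z-\widehat\z)$. Each of the three terms inside $\mathrm{Re}\{\cdot\}$ is linear in its second argument, so applying the inequality to both $\z$ and $\z'$ yields simultaneously $\ge 0$ and $\le 0$; hence equality. Conversely, the stated equality implies $\mathrm{d}^{\mathbb{R}} j(\widehat\z;\z-\widehat\z)=0\ge 0$ for every $\z\in Z$, which by the sufficiency part of Theorem~\ref{thm:optimality} (guaranteed by convexity of $j$) forces $\widehat\z$ to be optimal.

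The argument is essentially bookkeeping: the only subtle point is to make sure the regularity result of Corollary~\ref{cor:intAdj} is invoked before replacing the duality pairing by an inner product, and to record that the passage from $\ge$ to $=$ relies on the vector-space (not merely convex) structure of the admissible set $Z$, which is itself a consequence of the linearity of $\curlS$ and the zero-mean constraint defining $Z$ in \eqref{eq:Z}.
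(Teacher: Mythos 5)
Your proposal is correct and follows exactly the route the paper takes: its proof is the one-line citation of Theorem~\ref{thm:optimality}, Theorem~\ref{thm:adjoint} (via \eqref{def:dj1}), and Corollary~\ref{cor:intAdj}, of which your argument is the worked-out expansion. Your explicit justification of the passage from the variational inequality to the equality, using that $Z$ is a linear subspace so both $\z-\widehat{\z}$ and its negative are admissible directions, is a detail the paper leaves implicit and is exactly the right way to close that step.
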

\begin{proof}
The result is a consequence of Theorems \ref{thm:optimality} and \ref{thm:adjoint}, and Corollary \ref{cor:intAdj}.
\end{proof}


 \section{Discrete Problem}
 \label{s:discprob}

Let  $\Omega$ be a connected polyhedral Lipschitz domain, and $\{\Th\}_h$ be a family of shape regular simplicial triangulation for $\Omega$. For a given mesh $\Th$  we denote the set of faces on the boundary by $\Gamma_h$, and the edges belonging to $\Gamma_h$   by $\EGh$. We consider a conforming finite element space for $\Hcurl$ denoted by $\Nkh$, we utilize the basis  given in \cite{Jay:2005}, here $k=0$ denotes the lowest order. Thus, given $\z\in \HcurlS$ and  $k \in \mathbb{N}\cup \{0 \}$, we consider the semi-discrete state equation: f{}ind $\u_h\in \Nkh$ such that 
	\begin{align}\label{eq:DiscreteState}
	\begin{aligned}
		\left(  {\bm \mu}^{-1}\curl \u_h, \curl \v_h \right)_{0,\Omega}+(\i\omega)
         \left({\bm \kappa} \u_h,\v_h\right)_{0,\Omega} &= 0 \quad \forall \v_h \in \mathcal{N}^h_{k0},  \\                   
                    \u_h \times \n &= \z\times \n		\quad \mbox{on }  \Gamma,
	\end{aligned}	                    
	\end{align}
	where   $ \mathcal{N}^h_{k0}:= \Nkh\cap \Hzcurl$. Because $\mathcal{N}^h_{k0}$ is a closed subspace of $\Hzcurl$ well-posedness of (\ref{eq:DiscreteState})  follows from the continuous case, except for how the boundary condition $\u_h \times \n = \z\times \n$ is imposed. This can be done in several ways; for instance, with a $L^2(\Gamma_h)$ projection plus taking an average for the edge dofs. 
	Nevertheless, for that approach approximation and commutativity properties seem rather dif{}ficult to prove. We impose the Dirichlet condition in (\ref{eq:DiscreteState})  via moments, which allows us to use the well-known approximation theory of N\'ed\'elec and Raviart-Thomas elements in 2D along the approximation theory developed in  \cite{MR3522965}.
	
\subsection{Imposition of discrete boundary conditions}
\label{s:discbc}

As we already mentioned, we impose the condition $\u_h \times \n = \z\times \n$ through a lifting defined by moments. Now, the natural strategy would be to use the 2D local moments for $H(\div\!;F)$, for each face  $F$ on $\Gamma_h$, which is a collection of linear integral equations; see for instance \cite[Sec. 3]{Gatica14}. Nevertheless, because we consider $\z\times \n$ instead of just $\z$, imposing  those moments turns out to be equivalent to imposing $\gamma_T \u_h = \z$ through the local moments for $H(\curl\!,F)$, cf. \cite[Sec. 5.5]{MR2059447}.  We now define a global lifting operator 
\begin{align}
\begin{aligned}
  \Lh:\mathcal{D}_{\Lh}\subseteq \HcurlS &\mapsto \Nkh\\
         \z &\mapsto \Lh\z:=\u_z^h,
\end{aligned} \label{def:lifting}        
\end{align}
where $\mathcal{D}_{\Lh}$ denotes the domain of $\Lh$ which is a finite subspace of $\HcurlS$ and its elements have well-defined and continuous moments on $\Gamma_h$, $\u^h_z$ is the unique element in $\Nkh$ which shares the  local 2D N\'ed\'elec moments with $\z$ for all the faces/edges in $\Gamma_h$, and it has zero interior moments. Because of the regularity needed for this lifting, we will choose the 
discrete control space  $Z^h$ so that the following holds 
\begin{align}
	Z^h\subseteq \mathcal{D}_{\Lh}\subseteq \gamma_T \mathcal{N}^h_j,~ \mbox{ for some } j\in \{0,...,k\}.
\end{align}
%


\subsection{ Discrete solution operator and cost functional}
\label{s:discsoln}

Let $j\in \{0,...,k  \}$ and define ${Z^h\!:= \bm R_h\!\cap\! Z}$, where 
	\begin{align}
	 \bm R_h &:= \left\{\gamma_T\u_h: \u_h\in  \mathcal{N}^h_j \right\}. 
	\end{align}
Now, let us introduce the discrete  optimization problem, 
\begin{align}
            \displaystyle \min_{\u_h,\z_h} \J(\u_h,\z_h) :=
            \frac{1}{2}\int_\Omega |\u_h-\u_d|^2\,d\x
            	+\frac{\alpha}{2}\int_{\Gamma} |\curlS \z_h|^2\,dS
            	+\frac{\beta}{2}\int_{\Gamma} |\z_h|^2\,dS , 
	\end{align}
	where $\u_h$ is the unique solution to (\ref{eq:DiscreteState}), for $\z=\z_h$. 
As in the continuous case, we reduce this problem with the help of a (discrete) solution operator     
\begin{align*}
\S_h:Z^h\subseteq \bm R_h & \mapsto \Nkh\\
                       \z_h & \mapsto \S_h \z_h =: \u_h. 
\end{align*}
To prove that $\S_h$ is bounded, with constant independent of $h$, let us consider the following lemma. 
\begin{lemma}
	 Let $\u$ and $\u_h$ be the solutions for the continuous and discrete state equations, (\ref{eq:state}) and (\ref{eq:DiscreteState}), respectively, with respective boundary data given by $\z\times \n$ and $\z_h\times \n$. Then, the following holds
	 \begin{align}
	  \|\u-\u_h\|_{\curl,\Omega}\leq C \left(\min_{\v_h\in \mathcal{N}_k^h}\|\u -\v_h\|_{\curl,\Omega}+\|\z-\z_h\|_{H^{-{\frac 12}}_{\perp}(\curlS;\Gamma)} \right), \label{eq:error_estimate} 
	 \end{align}
     where   $C$ only depends on the shape regularity of $\Th$, $\Omega$ and the eigenvalues of ${\bm \mu}$ and ${\bm \kappa}$.
	\end{lemma}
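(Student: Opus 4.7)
The plan is to establish a Strang-type C\'ea estimate: since $\gamma_t\u_h$ matches the discrete datum $\z_h\times\n$ rather than $\z\times\n$, I would lift the boundary discrepancy into a function on $\Omega$, carry out a conforming energy estimate on the shifted error in $\Hzcurl$, and finally convert the resulting constrained best approximation into the unrestricted $\min_{\v_h\in\mathcal{N}_k^h}$ appearing in the statement. Throughout I use the reduction from Section~\ref{sec:rcf} that allows taking $\jc\equiv\bm 0$ without loss of generality.

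First, using Lemma~\ref{lemma:surjTanTrace} (surjectivity of $\gamma_t$ onto $H_{||}^{-{\frac 12}}(\divS;\Gamma)$) together with the isometric identification of trace norms invoked in the proof of Lemma~\ref{lemma:state_sol}, I construct $\bm\eta\in\Hcurl$ with $\gamma_t\bm\eta=(\z-\z_h)\times\n$ and $\|\bm\eta\|_{\curl,\Omega}\lesssim \|\z-\z_h\|_{H^{-{\frac 12}}_{\perp}(\curlS;\Gamma)}$. Setting $\tilde{\u}:=\u-\bm\eta$ gives $\gamma_t\tilde{\u}=\z_h\times\n=\gamma_t\u_h$, so $\tilde{\u}-\u_h\in\Hzcurl$; moreover, $\tilde{\u}$ satisfies the perturbed equation $a(\tilde{\u},\v)=-a(\bm\eta,\v)$ on $\Hzcurl$, which combined with the discrete equation yields the inexact Galerkin orthogonality $a(\tilde{\u}-\u_h,\v_h)=-a(\bm\eta,\v_h)$ for every $\v_h\in\mathcal{N}^h_{k0}$.

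Second, for any conforming lifting $\w_h\in\mathcal{N}_k^h$ with $\gamma_t\w_h=\z_h\times\n$, the difference $\w_h-\u_h$ lies in $\mathcal{N}^h_{k0}$. Splitting $\tilde{\u}-\u_h=(\tilde{\u}-\w_h)+(\w_h-\u_h)$ and combining the coercivity and continuity of $a$ on $\Hzcurl$ (cf.~Lemma~\ref{lemma:bijection_a}) with the inexact orthogonality, a standard Young-type argument yields
\[
\|\tilde{\u}-\u_h\|_{\curl,\Omega}\lesssim \|\tilde{\u}-\w_h\|_{\curl,\Omega}+\|\bm\eta\|_{\curl,\Omega}.
\]
The triangle inequalities $\|\u-\u_h\|\le \|\bm\eta\|+\|\tilde{\u}-\u_h\|$ and $\|\tilde{\u}-\w_h\|\le \|\u-\w_h\|+\|\bm\eta\|$ then produce the target bound for every conforming $\w_h$. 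To remove the trace constraint, for arbitrary $\v_h\in\mathcal{N}_k^h$ I would use the discrete lifting $\Lh$ from \eqref{def:lifting} to produce $\bm\xi_h\in\mathcal{N}_k^h$ with $\gamma_t\bm\xi_h=\z_h\times\n-\gamma_t\v_h$ and set $\w_h:=\v_h+\bm\xi_h$. Since $\gamma_t\u=\z\times\n$, the discrepancy splits as $(\z_h-\z)\times\n+\gamma_t(\u-\v_h)$, whose trace norm is controlled by $\|\z-\z_h\|_{H^{-{\frac 12}}_{\perp}(\curlS;\Gamma)}+\|\u-\v_h\|_{\curl,\Omega}$ via continuity of $\gamma_t$; absorbing the second summand and infimizing over $\v_h$ closes the argument.

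The main obstacle is the $h$-uniform stability of the discrete lifting $\Lh$ in the $\Hcurl$-norm against the appropriate trace norm. This is precisely the point the paper itself flags as delicate when opting to impose the Dirichlet datum via 2D moments rather than a simple $L^2(\Gamma)$ projection; the required continuity estimate must ultimately be inherited face-by-face from the two-dimensional N\'ed\'elec and Raviart-Thomas approximation theory referenced in Section~\ref{s:discbc}.
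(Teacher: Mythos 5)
Your proposal takes a genuinely different route from the paper. The paper's proof is essentially two lines: it invokes \cite[Corollary 4.4]{MR3522965}, which already delivers the quasi-optimal estimate with the boundary discrepancy measured in $\|\z\times\n-\z_h\times\n\|_{H^{-{\frac 12}}_{||}(\divS;\Gamma)}$, and then converts to the stated norm via the rotation isometry $\|\cdot\times\n\|_{H^{-{\frac 12}}_{||}(\divS;\Gamma)}=\|\cdot\|_{H^{-{\frac 12}}_{\perp}(\curlS;\Gamma)}$. You instead reconstruct the estimate from first principles with a Strang/C\'ea argument: continuous lifting $\bm\eta$ of the datum discrepancy, inexact Galerkin orthogonality, coercivity on $\Hzcurl$, and a discrete lifting to pass from trace-constrained to unconstrained best approximation. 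The architecture of that argument is correct, and steps one and two are complete as written.

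The genuine gap is the one you flag yourself, and it is not a loose end but the entire technical content of the result the paper cites: you need a discrete lifting $\bm\xi_h\in\mathcal{N}_k^h$ of the (discrete) tangential trace $\z_h\times\n-\gamma_t\v_h$ satisfying $\|\bm\xi_h\|_{\curl,\Omega}\lesssim\|\z_h\times\n-\gamma_t\v_h\|_{H^{-{\frac 12}}_{||}(\divS;\Gamma)}$ with a constant independent of $h$. The only uniform lifting available in the paper (the operator $L_h$ stated immediately after this lemma) is stable against $\|\cdot\|_{\curlS}$, i.e.\ the $L^2(\Gamma)$-based norm, which is strictly stronger than $H^{-{\frac 12}}_{||}(\divS;\Gamma)$; using it would degrade the final estimate to one with $\|\z-\z_h\|_{\curlS}$ on the right, which is not the claimed bound (and would weaken the compactness argument in Theorem~\ref{thm:ctrlconv}, which relies precisely on the compact embedding into the $H^{-\frac12}$-type trace norm). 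Your suggestion that the required stability is ``inherited face-by-face'' from 2D N\'ed\'elec/Raviart--Thomas theory cannot close this gap: the $H^{-{\frac 12}}_{||}(\divS;\Gamma)$ norm is nonlocal on $\Gamma$, so facewise moment estimates give control only in localized (and stronger) norms. A correct construction goes through a global discrete extension theorem or a commuting quasi-interpolation operator preserving boundary data, which is exactly what \cite[Corollary 4.4]{MR3522965} packages. So your proof is a sound skeleton of the cited result, but as it stands it defers its hardest step to an assertion that is neither proved nor correctly reducible to the local 2D theory you point to.
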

\begin{proof}
 From \cite[Corollary 4.4]{MR3522965}, we have 
 \begin{align*}
  \|\u-\u_h\|_{\curl,\Omega}\leq C \left(\min_{\v_h\in \mathcal{N}_k^h}\|\u -\v_h\|_{\curl,\Omega}+\|\z\times \n-\z_h\times \n\|_{H^{-{\frac 12}}_{||}(\divS;\Gamma)} \right)
 \end{align*}
 Finally, from the  well-known isometry for weak rotations  between $H^{-{\frac 12}}_{\perp}(\curlS;\Gamma)$ and $H^{-{\frac 12}}_{||}(\divS;\Gamma)$, see \cite[p. 448]{MR3930592}, the proof concludes.
\end{proof}
A useful result, related to (\ref{eq:error_estimate}), 
is the existence of a uniformly bounded operator
\begin{align*}
 L_h:  \bm R_h &\mapsto \Nkh \\
 \z_h&\mapsto \u_{\z_h},
\end{align*}
such that 
$
 \gamma_T(\u_{\z_h})= \z_h,$ and $\|\u_{\z_h}\|_{\curl,\Omega}\leq C \|\z_h\|_{\curlS},
$
where $C$ is independent of $h$, see \cite{MR3522965} and references within. Thus, $\S_h$ is linear and bounded independently of $h$. 
 We now introduce the reduced discrete cost functional 
\begin{align}
\begin{aligned}
 j_h: Z^h &\mapsto \R, \\
 \z_h&\mapsto j_h(\z_h):=\J(\S_h(\z_h),\z_h).
\end{aligned} \label{def:Discrete_reduced}
\end{align}
It is clear that,  $j_h$ is continuous and bounded independently of $h$. Now, under the same arguments as for the continuous case we have 
\begin{align}\label{def:dj_h}
  \mathrm{d}^{\mathbb{R}}  j_h(\z_h,\bm \xi_h)
  =& \mathrm{Re}\{\left(\S_h\z_h-\u_d,\S_h\bm \xi_h\right)_{0,\Omega}+\alpha (\curlS \z_h,\curlS \bm \xi_h)_{0,\Gamma}
  + \beta (\z_h, \bm \xi_h)_{0,\Gamma}\}\\\nonumber
  =& \mathrm{Re}\{\left\langle\S_h^*(\S_h\z_h-\u_d),\bm \xi_h\right\rangle_{\Gamma^*}+\alpha (\curlS \z_h,\curlS \bm \xi_h)_{0,\Gamma}
  + \beta (\z_h, \bm \xi_h)_{0,\Gamma}\}
\end{align}
for all $\z_h,$ and $\bm \xi_h $ in $ Z^h$. Where the action of the discrete adjoint operator $\S_h^*$ will de defined later, cf. (\ref{def:discreteAdjoint}).

\subsection{Particular choice of discrete  control space}
\label{s:ctrlspace}

%

For simplicity and because most of the theory for N\'ed\'elec elements focuses on this case, we consider the lowest order space for our discrete control $\z_h$, along with a higher order approximation for $\u_h$. As usual, we just construct the real-valued spaces. It turns out that, it is easier to inherit properties for $\z_h$  from the space of $\z_h\times \n $, which is related to the Raviart-Thomas space. 
In fact, given $F\in \Gamma_h$ we recall that the local Raviart-Thomas space  $\mathcal{R}T_k(F)$ is just a $\pi/{2}$-rotation of N\'ed\'elec's space  $\mathcal{N}_k(F)$, see \cite{Jay2:2005}. 
This idea can be easily generalized to the piecewise linear manifold $\Gamma_h$. In order to do that,  we f{}irst  introduce the Raviart-Thomas space for $\Gamma_h$    
\begin{align}
\mathcal{R}T_k(\Gamma_h):=\left\{\q\in \Lt\colon\, \forall F\in \Gamma_h,~ \q|_{F}\in \mathcal{R}T_k(F)~\mbox{ and }~ \forall \e\in \EGh,~ [\q\cdot\bm \nu]_{\e}=0 \right\}, 
\end{align}
where $ [\q\cdot\bm \nu]_{\e}=0$ denotes the continuity of $\q\cdot\bm \nu$ across the  edge $\e$, and  
 $\bm \nu$  is the 2D ``normal vector'' on $\e$, this  will be clarified later, see  (\ref{def:normalFace}). In the same manner, the space for  the discrete control $\z_h$  will be a subspace of  the N\'ed\'elec space for  $\Gamma_h$, 
which is given by
\begin{align}
\mathcal{N}_k(\Gamma_h):=\left\{\q\in \Lt\colon\, \forall F\in \Gamma_h,~ \q|_{F}\in \mathcal{N}_k(F)~\mbox{ and }~ \forall \e\in \EGh,~ [\q]_{\e}\cdot\t_{\e}=0 \right\},
\label{def:NedelecManifold}
\end{align}
where $\t_{\e}$ is a unitary vector along the edge $\e$. Then, it follows that $\mathcal{R}T_k(\Gamma_h)$ is a  $\pi/{2}$-rotation of $\mathcal{N}_k(\Gamma_h)$ but just facewise, which can be compactly represented as $\mathcal{R}T_k(\Gamma_h)=\mathcal{N}_k(\Gamma_h)\times \n$.  Now, we show that 
for the lowest order case,
$k=0$, the identity $\mathcal{R}T_k(\Gamma_h)=\mathcal{N}_k(\Gamma_h)\times \n$ can be reduced to an identity between elements of a particular choice of bases. To show that, notice that  $\ds \mathcal{R}T_0(\Gamma_h)$ is given by
\begin{align*}
\left\{\q\in \Lt\colon\, \forall F\in \Gamma_h,\exists \bm a\in \mathbb{R}^3,\exists b\in \R ,~ \q|_{F}= \bm a+b\x~\mbox{ and }~ \forall \e\in \EGh,~ [\q\cdot\bm \nu]_{\e}=0 \right\}. 
\end{align*}
For  a global basis for this space we consider the Rao-Wilton-Glisson basis \cite{Rao1982}. 
From now on, we assume that all the faces on $\Gamma_h$ are oriented counterclockwise in terms of its vertices, and the normal vector of a face points outward. Now, let us consider an edge $\e\in \EGh$ and the two faces, $F\!\!_{+}$ and $F\!\!_{-}$ on $\Gamma_h$, which share $\e$.  Therefore, without loss of generality, we assume that  $F\!\!_{-}$ has $\e$ with a negative orientation.   We now want to find    
 $\bm \psi_{\e}$ in $\mathcal{R}T_0(\Gamma_h) \cap  H^{\frac12}_{-}(\Gamma)$ such that  $\bm \psi_{\e}\cdot \bm \nu|_{F\!\!_{+}}+\bm \psi_{\e}\cdot \bm \nu|_{F\!\!_{-}}$ vanishes point-wise on $\Eh$.
 Here the main difference between this case and the 2D-case is that  $\bm \nu|_{F\!\!_{+}}\neq  -\bm \nu|_{F\!\!_{-}}$ in general. Nevertheless, ``the definition for Raviart-Thomas basis'' is the same for this case.  Consider,
\begin{align}\label{def:RTbasis}
\begin{aligned}
  \bm \psi_{\e}: \Gamma_h &\mapsto  \mathbb{R}^3\!\!,\\
 \bm x &\mapsto \bm \psi_{\e}(\x):=
 \begin{cases}
 ~~  \displaystyle\frac{|\e|}{2|F\!\!_{+}|} (\x-\v^{+}_{\e}) & :\x\in F\!\!_{+},\\[0.3cm]
  - \displaystyle\frac{|\e|}{2|F\!\!_{-}|} (\x-\v^{-}_{\e}) & :\x\in F\!\!_{-},\\[0.3cm]
  \quad \bm 0 & : \mbox{elsewhere},
 \end{cases}
\end{aligned}
\end{align}
where $\v^{\pm}_{\e}$ is the vertex opposite to $\e$ in $F_{\pm}$, see Figure \ref{fig:Psi_e}, $\bm \psi_{\e}$ was scaled for visualization purposes. 
 \begin{figure}[H]
  \centering
  \includegraphics[width=0.32\textwidth]{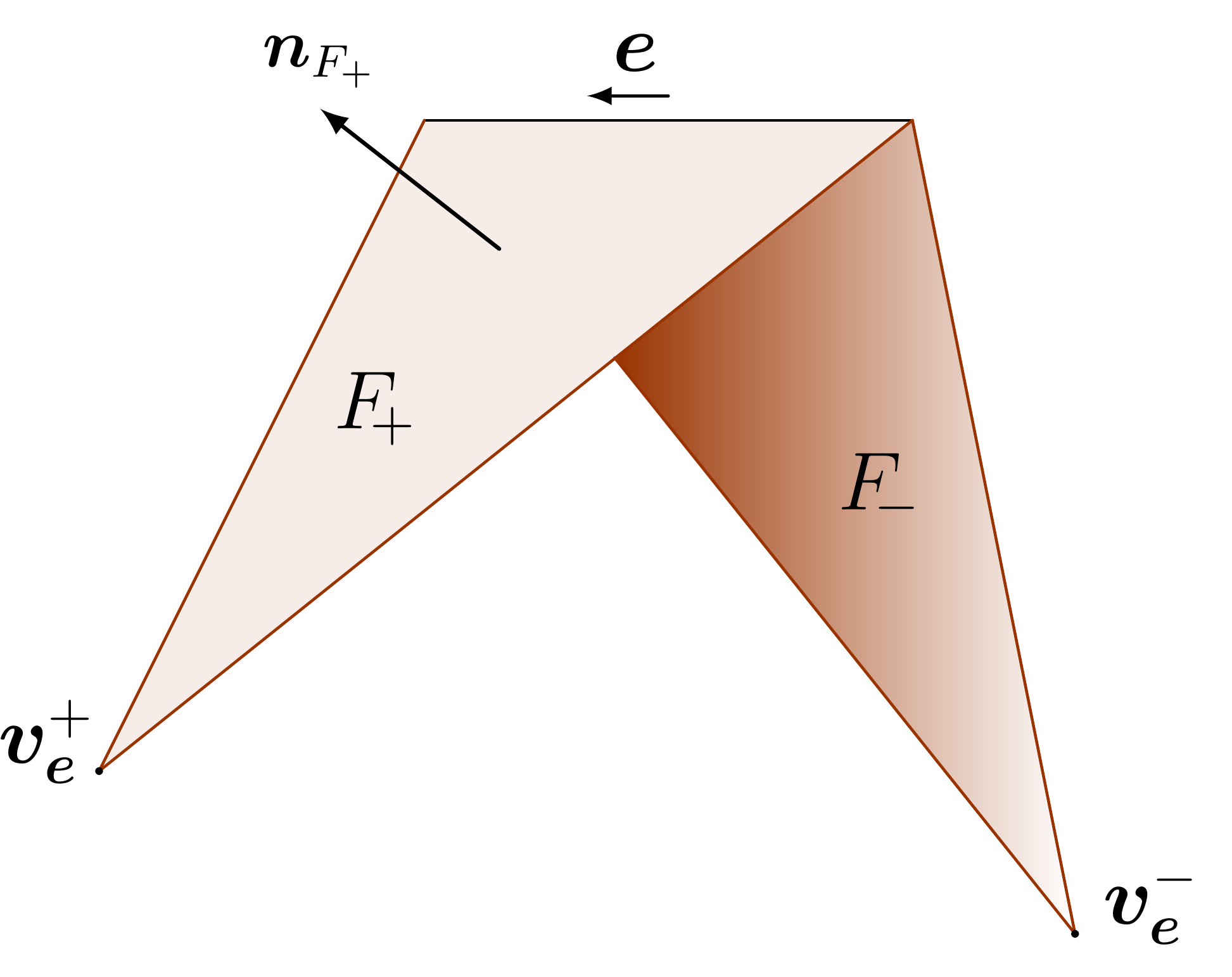}
  \includegraphics[width=0.32\textwidth]{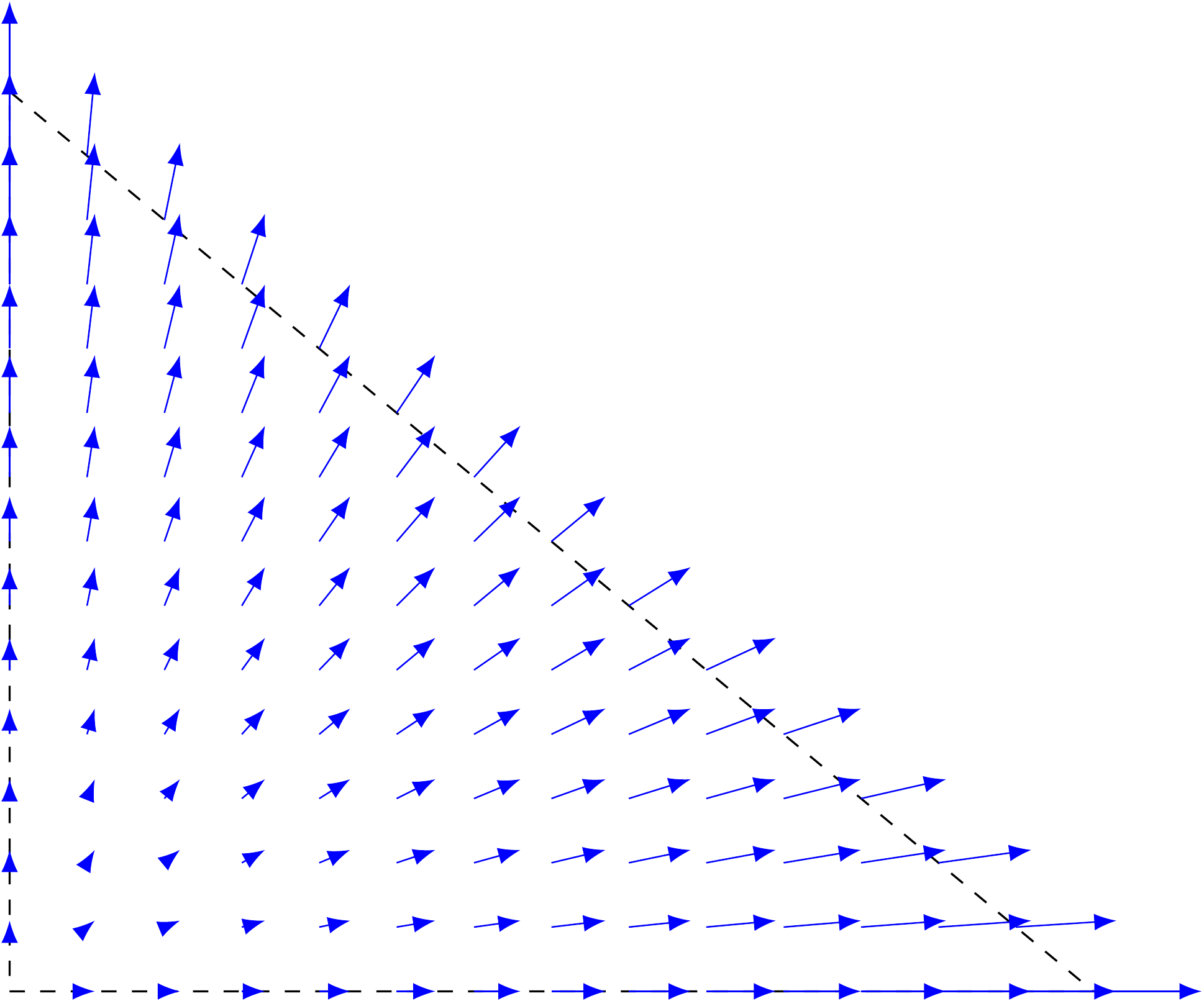}
   \includegraphics[width=0.32\textwidth]{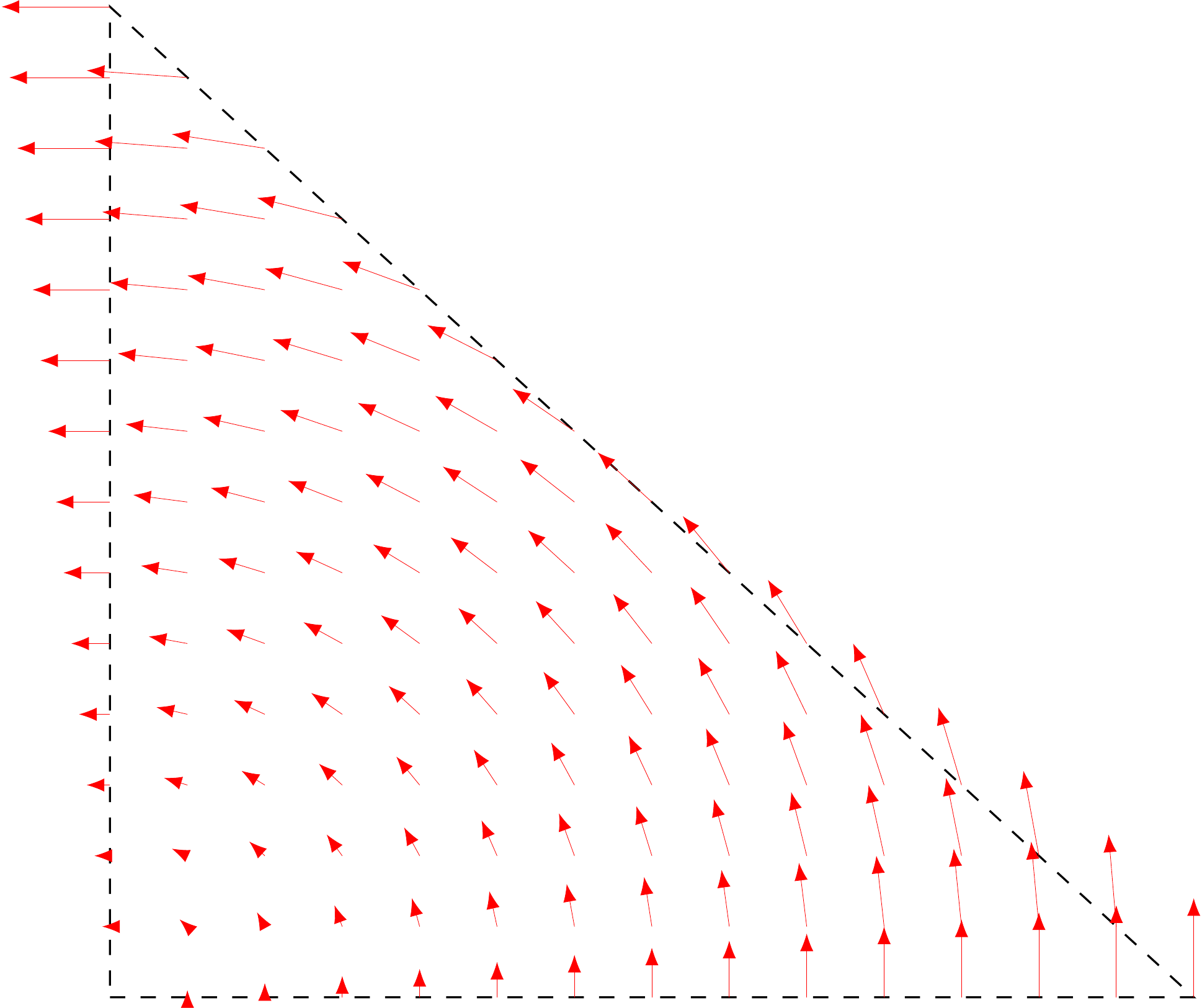}
  \caption{$F\!_+\cup \bm e\cup F\!\!_{-}$, $\bm \psi_{\e}|_{F\!\!_{+}}$ and  $(\n_{F\!\!_{+}} \times \bm \psi_{\e})|_{F\!\!_{+}}$ }
  \label{fig:Psi_e}
 \end{figure}
The functions $\bm \psi_{\e}$ have many good properties. For instance, it has a constant normal component on each edge of $\overline{F\!\!_{+}}$ and $ \overline{F\!\!_{-}}$. To show this, we follow  \cite[Sec. 4]{MR2194203}.  Given $F\in\Gamma_h$, let us assume that $F=\mathrm{conv}\{\v_1,\v_2,\v_3 \}$, and def{}ine $\e_j$ to be the edge opposite to the vertex $\v_j$. Then, by some basic properties of triangles and  orthogonal projections in  $\mathbb{R}^2$, see Figure \ref{fig:GeoTri},  yields
\begin{align}
 (\x-\v_j)\cdot \bm \nu_j= h_j=2\frac{|F|}{|\e_j|}\quad \forall \x\in \e_j, \quad \forall j\in \{1,2,3 \}.\label{eq:constNormals}
\end{align}
 \begin{figure}[H]
  \centering
  \includegraphics[width=2.2in,height=1.5in]{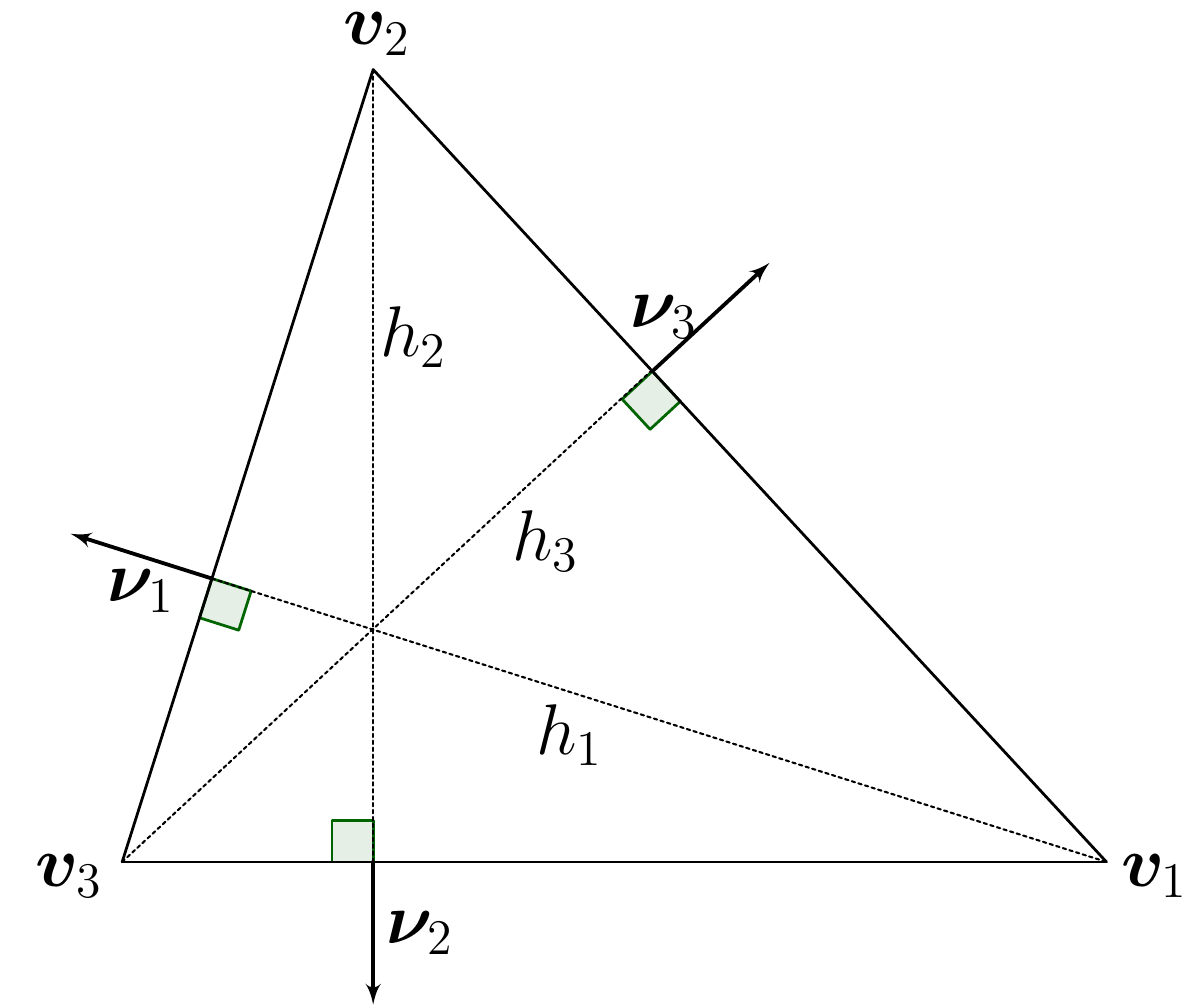}
  \caption{Altitudes and outer normals of $F$\!.}\label{fig:GeoTri}
 \end{figure}

Now, we give a formal definition for the ``normal'' vector $\bm \nu$, see  \cite[section 2]{buffa:I},
\begin{align}
 \bm \nu= 
  \begin{cases}
   \t_{\e}\times \n_{F\!\!_{+}} & \mbox{ on } \e \cap \overline{F}\!\!_+,  \\ \label{def:normalFace}
   \t_{\e}\times \n_{F\!\!_{-}} &  \mbox{ on } \e \cap \overline{F}\!\!_-,
  \end{cases}
\end{align}
where $\t_{\e}$ is a unitary vector along $\e$, following its orientation, and $ \n_{F_{\pm}}$ is the unitary outer normal vector to $F_{\pm}$. Note that, $ \bm {\nu}|_{\e\cap F\!\!_\pm}=\pm \bm \nu_{F\!\!_\pm}$, therefore $\bm \nu|_{\e\cap F\!\!_{-}}$ points inward. With this definition we have the following result

\begin{lemma}[] Let $\bm\psi_e$ def{}ined as in (\ref{def:RTbasis}), then 
 \begin{align*}
   \left(\bm \psi_e\cdot \bm \nu\right)(\x) =&
   \begin{cases}
    \ds 1 &: \x \in \e, \\
     0 &: \x \in  \EGh\setminus\{\e\},
   \end{cases}\\
  \divS \bm \psi_e  = \,&
  \begin{cases}
     \pm  \frac{\ds |\e|}{\ds|F_{\pm}|} & \mbox{ in }  F_{\pm},\\
     0 & \mbox{elsewhere},
  \end{cases}\\
  \int_\Gamma   \divS \bm \psi_e dS=\,&0.
 \end{align*}
\end{lemma}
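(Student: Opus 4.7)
All three identities reduce to face-local geometric computations on the planar triangles $F\!\!_{+}$ and $F\!\!_{-}$, using the explicit affine form of $\bm\psi_e$ in \eqref{def:RTbasis}, the elementary identity \eqref{eq:constNormals}, and the sign convention for $\bm\nu$ in \eqref{def:normalFace}. No global analysis on $\Gamma_h$ is needed, beyond the observation that the Rao--Wilton--Glisson construction yields $\bm\psi_e\in \mathcal{R}T_0(\Gamma_h)$, so $\bm\psi_e\cdot\bm\nu$ is continuous across $\e$ and no distributional edge contributions appear in $\divS\bm\psi_e$.

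For the first identity, fix $\x\in \e$ and compute on each side separately. On $\e\cap\overline F\!\!_{+}$, apply \eqref{eq:constNormals} in $F\!\!_{+}$ with $\e_j=\e$ and $\v_j=\v^{+}_{\e}$ to obtain $(\x-\v^{+}_{\e})\cdot\bm\nu_{F\!\!_{+}}=2|F\!\!_{+}|/|\e|$; combining with $\bm\nu=\bm\nu_{F\!\!_{+}}$ on $\e\cap\overline F\!\!_{+}$ and the prefactor in \eqref{def:RTbasis} yields $(\bm\psi_e\cdot\bm\nu)(\x)=1$. On $\e\cap\overline F\!\!_{-}$ the minus sign in the $F\!\!_{-}$ branch of \eqref{def:RTbasis} and the minus sign in $\bm\nu=-\bm\nu_{F\!\!_{-}}$ cancel, producing $1$ again, which is precisely the Rao--Wilton--Glisson normal-flux consistency. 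For any other edge $\e'\subset\partial F\!\!_{+}\cup\partial F\!\!_{-}$, the corresponding opposite vertex $\v^{+}_{\e}$ or $\v^{-}_{\e}$ is an endpoint of $\e'$, so $\x-\v^{\pm}_{\e}$ lies along $\e'$ and is orthogonal to $\bm\nu_{\e'}$; outside $\overline F\!\!_{+}\cup\overline F\!\!_{-}$ the function $\bm\psi_e$ vanishes by \eqref{def:RTbasis}.

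For the second and third identities, note that on each of $F\!\!_{+}$ and $F\!\!_{-}$ the field $\bm\psi_e$ is tangential (both $\x$ and $\v^{\pm}_{\e}$ lie in the plane of the face) and affine. Expressed in any orthonormal planar frame of the face, $\x-\v^{\pm}_{\e}$ becomes the identity vector field translated by a constant, whose intrinsic divergence equals $2$; scaling by $\pm|\e|/(2|F\!\!_{\pm}|)$ yields $\divS\bm\psi_e=\pm|\e|/|F\!\!_{\pm}|$ on $F\!\!_{\pm}$ and $0$ elsewhere. By the normal-continuity across $\e$ already established, the face-wise values constitute the full $L^2(\Gamma)$-divergence, so
\begin{align*}
\int_\Gamma \divS\bm\psi_e\,dS \;=\; \frac{|\e|}{|F\!\!_{+}|}\,|F\!\!_{+}|\;-\;\frac{|\e|}{|F\!\!_{-}|}\,|F\!\!_{-}| \;=\; 0.
\end{align*}
The only point requiring care is the sign bookkeeping on $\e\cap\overline F\!\!_{-}$: one must verify that the minus sign in \eqref{def:RTbasis} cancels the minus sign from $\bm\nu=-\bm\nu_{F\!\!_{-}}$ in \eqref{def:normalFace}. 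Once this cancellation is in hand, every other step is a direct substitution.
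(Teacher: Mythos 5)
Your proof is correct. Note, however, that the paper does not actually prove this lemma itself: its entire ``proof'' is a citation to Lemma~4.1 of the Buffa--Christiansen reference \cite{MR2194203}, so your computation supplies the verification that the paper delegates. The content of your argument is exactly what that verification requires: the evaluation $(\bm\psi_\e\cdot\bm\nu)|_\e=1$ from both sides via \eqref{eq:constNormals} together with the sign convention $\bm\nu|_{\e\cap F_\pm}=\pm\bm\nu_{F_\pm}$ from \eqref{def:normalFace} (the two minus signs on the $F_-$ side cancelling, as you flag); the vanishing on the remaining edges because $\v^{\pm}_{\e}$ is an endpoint of every other edge of $F_\pm$, so $\x-\v^{\pm}_{\e}$ is tangent to that edge; the facewise identity $\divS(\x-\v^{\pm}_{\e})=2$ for a planar affine field, giving $\divS\bm\psi_\e=\pm|\e|/|F_\pm|$ with no distributional edge contribution thanks to the normal-flux continuity; and the telescoping integral $|\e|-|\e|=0$. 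All steps check out, and the one place where an error could hide --- the orientation bookkeeping on $\e\cap\overline F_-$ --- is handled explicitly and correctly.
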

\begin{proof}
 See \cite[Lemma 4.1]{MR2194203}.
\end{proof}
Now, we will study a basis for 
$\N_0(\Gamma_h)$, cf. (\ref{def:NedelecManifold}). To simplify the notation, from now on we assume  that $\e$ is the edge between the vertices $[\x_\ell,\x_m]$, following that orientation. 
Because for the lowest order case, in  2D and 3D, there are only edge related functions of the form, 
$ \widetilde{\bm \phi}_{\e} = \lambda_\ell\nabla \lambda_m-\lambda_m\nabla \lambda_\ell.$
This motivates us to consider the collection of functions $\{ \bm \phi_{\e}\}_{\e\in \EGh}$, given by  
\begin{align}\label{def:Nbasis}
\begin{aligned}
  \bm \phi_{\e}: \Gamma_h &\mapsto  \mathbb{R}^3\!\!,\\
 \bm x &\mapsto \bm \phi_{\e}(\x):=
 \begin{cases}
  |\e|\left(\lambda_\ell\nabla_\Gamma \lambda_{m}-\lambda_m\nabla_\Gamma \lambda_\ell\right)|_{F_{\pm}}& :\x\in F\!\!_{\pm},\\[0.3cm]
  \quad \bm 0 & : \mbox{elsewhere}.
 \end{cases}
\end{aligned}
\end{align}
Note that $\bm \phi_{\e}=\n_{F\!\!_{+}} \times \bm \psi_{\e}$, see Figure \ref{fig:Psi_e}.  Based on this observation, we have the following result  

\begin{lemma}\label{lemma:surfaceBasis} Let $\e\in \EGh$ then
\begin{align*}
\bm \phi_{\e}\cdot \t_{\widehat{\e}}= & 
\begin{cases}
 \ds 1 & \e=\,\widehat{\e},\\
 0 & \e\neq \widehat{\e},
\end{cases}\\
\bm \phi_{\e}\times  \n=&\,  \bm \psi_{\e},\\ 
 \curlS \bm \phi_{\e}  =  \,&
  \begin{cases}
     \pm  \frac{\ds |\e|}{\ds|F_{\pm}|} & \mbox{ in }  F_{\pm},\\
     0 & \mbox{elsewhere},
  \end{cases}\\
 \int_{\Gamma}\curlS   \bm \phi_{\e}dS=&\,0.
\end{align*}
\end{lemma}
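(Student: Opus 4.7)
The plan is to reduce every claim to the corresponding property of the Rao--Wilton--Glisson basis function $\bm\psi_{\e}$ established in the preceding lemma, by exploiting the facewise rotation identity
\[
  \bm\phi_{\e}|_{F_\pm}=\n_{F_\pm}\times \bm\psi_{\e}|_{F_\pm}.
\]
To justify this identity, I would write $F_{\pm}=\mathrm{conv}\{\v_\ell,\v_m,\v^\pm_{\e}\}$ and recall that in 2D the lowest order N\'ed\'elec basis function $\lambda_\ell\nabla\lambda_m-\lambda_m\nabla\lambda_\ell$ is exactly the $\pi/2$ rotation of the Raviart--Thomas shape function $\frac{1}{2|F_\pm|}(\x-\v^\pm_{\e})$ within the face $F_\pm$, which amounts to left-multiplication by $\n_{F_\pm}\times$. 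The scalings by $|\e|$ match by (\ref{def:RTbasis}) and (\ref{def:Nbasis}).

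Given this, I would handle the four statements in order. For the second identity, since $\bm\psi_{\e}$ is tangential to the face on which it is supported, the BAC--CAB identity gives
\[
  \bm\phi_{\e}\times \n=(\n\times\bm\psi_{\e})\times \n=\bm\psi_{\e}-(\bm\psi_{\e}\cdot\n)\n=\bm\psi_{\e}.
\]
For the tangential identity, I use the scalar triple product on each face and the convention (\ref{def:normalFace}), which on $\widehat{\e}\cap \overline{F}_+$ gives $\t_{\widehat\e}\times \n_{F_+}=\bm\nu$ (and analogously on $F_-$):
\[
  \bm\phi_{\e}\cdot \t_{\widehat\e}=(\n\times\bm\psi_{\e})\cdot \t_{\widehat\e}=\bm\psi_{\e}\cdot(\t_{\widehat\e}\times \n)=\bm\psi_{\e}\cdot \bm\nu,
\]
and the preceding lemma reduces the right-hand side to $\delta_{\e,\widehat\e}$.

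For the third identity, I invoke the standard piecewise surface calculus formula $\curlS \v=\divS(\v\times\n)$ valid on each flat face for tangential $\v$; combined with the second claim this yields $\curlS\bm\phi_{\e}=\divS\bm\psi_{\e}$ on each face, and the value $\pm|\e|/|F_\pm|$ then comes directly from the previous lemma. The fourth identity is then immediate by integration and the mean-zero property $\int_\Gamma \divS \bm\psi_{\e}\,dS=0$. The main obstacle is keeping the sign conventions consistent across $F_+$ and $F_-$, since the two faces carry different normals $\n_{F_\pm}$ and $\bm\nu$ switches sign across $\e$; I would check this by working out the rotation identity separately on each triangle and verifying that the orientations chosen in (\ref{def:RTbasis}) and (\ref{def:Nbasis}) produce matching signs.
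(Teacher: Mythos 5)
Your proposal is correct and follows the same overall strategy as the paper: every claim is reduced to the corresponding property of $\bm \psi_{\e}$ through the facewise $\pi/2$-rotation relationship and the identity $\curlS \bm \phi_{\e} = \divS (\bm \phi_{\e}\times \n)$, which is exactly the identity the paper's proof invokes for the third and fourth assertions. The one place where you genuinely diverge is the justification of $\bm \phi_{\e}\times \n = \bm \psi_{\e}$: the paper observes that $\bm \phi_{\e}\times \n$ lies facewise in $\mathcal{R}T_0(F_\pm)$ and shares the same edge moments as $\bm \psi_{\e}$, then concludes by unisolvence of $\mathcal{R}T_0(F)$; you instead verify the rotation identity $\bm \phi_{\e}|_{F_\pm}=\n_{F_\pm}\times \bm \psi_{\e}|_{F_\pm}$ by explicit computation with the barycentric formulas and then apply the BAC--CAB expansion. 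The unisolvence argument has the advantage of sidestepping the orientation bookkeeping across $F_+$ and $F_-$ that you correctly flag as the main obstacle, since the edge degrees of freedom already encode the sign conventions of \eqref{def:RTbasis} and \eqref{def:Nbasis}; your route buys a fully explicit identity at the cost of carrying that sign check through both triangles. Similarly, for the tangential trace the paper reduces to $\nabla_\Gamma \lambda_\ell\cdot \t_{\e}=\nabla \lambda_\ell\cdot \t_{\e}$ and the standard N\'ed\'elec edge computation, whereas you derive it from the rotation identity via the scalar triple product and the definition \eqref{def:normalFace} of $\bm \nu$; both are sound, and your version makes the dependence on $\bm \psi_{\e}\cdot\bm\nu$ pleasantly transparent.
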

\begin{proof}
 It follows from $\nabla_\Gamma \lambda_\ell\cdot \t_{\e}=\nabla  \lambda_\ell\cdot \t_{\e},$ the definition of $\bm \nu$, cf. (\ref{def:normalFace}), $\bm \phi_{\e}\times  \n$ having the same edge moments as $ \bm \psi_{\e}$, the unisolvence of $\mathcal{R}T_0(F)$, and the identity 
 $\curlS \bm\phi_{\e} =\divS \bm \phi_{\e}\times \n$. 
\end{proof}
We now show how to compute the terms that appears in the  stabilization term in the cost functional, and its derivative, for the discrete setting. Our analysis only involves the length of the boundary edges, their local orientation, and 
the barycentric coordinates $\widehat \lambda_1,\widehat \lambda_2$ and $\widehat \lambda_3$  on the 2D reference face $\widehat F=\mathrm{conv}\{(0,0)^T,(1,0)^T,(0,1)^T\}$.
\begin{proposition}
Let   $F\in \Gamma_h$ with edges $(\e_a,\e_b,\e_c)$, $\z\in Z^h$ with  
$\z|_F=\beta_a \bm \phi_{\e_a}+\beta_b \bm \phi_{\e_b}+\beta_c \bm \phi_{\e_c}$. Then, for each $i,j\in \{a,b,c\}$, yields 
\begin{align}
 \langle \beta_i \curlS\bm \phi_{\e_i},\beta_j \curlS\bm \phi_{\e_j} \rangle_F=&~\beta_i \overline{\beta}_j \int_F \curlS   \bm \phi_{\e_i} \curlS   \bm \phi_{\e_j}dS= \beta_i \overline{\beta}_j\frac{|\e_i||\e_j|}{|F|},\\
\langle \beta_i \bm \phi_{\e_i},\beta_j \bm \phi_{\e_j} \rangle_F=&~
\beta_i \overline{\beta}_j \int_F \bm \phi_i\cdot \bm \phi_jdS
=\beta_i \overline{\beta}_j |\bm e_i||\bm e_j| \frac{|F|}{2}\int_{\widehat F} \left(\widehat{\bm \phi}_\ell\right)^t\mathbb{B}_F \widehat{\bm \phi}_m dS,
\end{align}
where $\ell, m\in \{1,2,3 \}$,  
\begin{align*}
\mathbb{B}_F:=& \frac{1}{4|F|^2}
 \begin{bmatrix}
    |\e_a|^2 & \ds \frac{|\e_b|^2-(|\e_a|^2+|\e_c|^2)}{2} \\
    \ds \frac{|\e_b|^2-(|\e_a|^2+|\e_c|^2)}{2} & |\e_c|^2
 \end{bmatrix},\\
 \widehat{\bm \phi}_1:=&  -(\widehat\lambda_1+\widehat\lambda_2) 
 \begin{pmatrix}1\\0\end{pmatrix}
- \widehat\lambda_2\begin{pmatrix}0\\1\end{pmatrix}\!, 
\widehat{\bm \phi}_2:=
\widehat\lambda_3\begin{pmatrix}1\\0\end{pmatrix}- \widehat\lambda_2\begin{pmatrix}0\\1\end{pmatrix}\!, 
\mbox{ and }
\widehat{\bm \phi}_3:=
\widehat\lambda_3\begin{pmatrix}1\\0\end{pmatrix}+ (\widehat\lambda_1+\widehat\lambda_3)\begin{pmatrix}0\\1\end{pmatrix}.
\end{align*}
\end{proposition}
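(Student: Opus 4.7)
Part 1 is an immediate consequence of Lemma~\ref{lemma:surfaceBasis}. Restricted to the fixed face $F$, each $\curlS\bm\phi_{\e_i}|_F$ is the constant $\pm|\e_i|/|F|$ (the sign determined by whether $F$ plays the role of $F_+$ or $F_-$ in the definition of $\bm\phi_{\e_i}$), so the product $\curlS\bm\phi_{\e_i}\,\curlS\bm\phi_{\e_j}$ is a constant on $F$ and integrating over $F$ (whose measure is $|F|$) directly yields $\pm|\e_i||\e_j|/|F|$. The sign is consistent with our counter-clockwise/outward-normal convention on $\Gamma_h$, or else absorbed into $\beta_i,\beta_j$.

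For Part 2, the plan is an affine pull-back to the reference triangle $\widehat F$. First I strip the edge-length prefactor by writing $\bm\phi_{\e_i}=|\e_i|\tilde{\bm\phi}_i$, with $\tilde{\bm\phi}_i := \lambda_\ell\nabla_\Gamma\lambda_m - \lambda_m\nabla_\Gamma\lambda_\ell$; this accounts for the $|\e_i||\e_j|$ factor on the right-hand side. Next I parameterize $F$ by the affine map $\mathbf F\colon\widehat F\to F$, $\mathbf F(\widehat\x)=A\widehat\x+\bm b$, whose $3\times 2$ Jacobian $A$ has as its two columns the edge vectors of $F$ emanating from $\mathbf F(0,0)$. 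Three elementary facts drive the computation: the surface element satisfies $\mathrm dS = \sqrt{\det(A^TA)}\,\mathrm d\widehat S = 2|F|\,\mathrm d\widehat S$, since $|A_1\times A_2|=2|F|$; barycentric coordinates pull back exactly, $\lambda_k\circ\mathbf F=\widehat\lambda_k$; and because $\nabla_\Gamma\lambda_k$ is tangent to $F$ (hence lies in the column span of $A$), the chain rule $\widehat\nabla\widehat\lambda_k = A^T\nabla_\Gamma\lambda_k$ inverts to $\nabla_\Gamma\lambda_k\circ\mathbf F = A(A^TA)^{-1}\widehat\nabla\widehat\lambda_k$.

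Combining these gives the covariant-type transformation $\tilde{\bm\phi}_i\circ\mathbf F = A(A^TA)^{-1}\,\widehat{\tilde{\bm\phi}}_i$, where $\widehat{\tilde{\bm\phi}}_i=\widehat\lambda_\ell\widehat\nabla\widehat\lambda_m - \widehat\lambda_m\widehat\nabla\widehat\lambda_\ell$ is the usual reference N\'ed\'elec basis function. Since $\bigl(A(A^TA)^{-1}\bigr)^{T}\bigl(A(A^TA)^{-1}\bigr)=(A^TA)^{-1}$, the pointwise dot product collapses to $\widehat{\tilde{\bm\phi}}_i^{\,T}(A^TA)^{-1}\widehat{\tilde{\bm\phi}}_j$, and the change of variables produces a reference bilinear form whose matrix is $(A^TA)^{-1}$. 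To identify this matrix with $\mathbb B_F$, use $\det(A^TA)=|A_1\times A_2|^2=4|F|^2$ and the polarization identity $A_1\cdot A_2 = \tfrac12(|A_1|^2+|A_2|^2-|A_1-A_2|^2)$: the three edge lengths of $F$ land in the entries of $(A^TA)^{-1}$ as the diagonal entries $|\e_a|^2$, $|\e_c|^2$ and the off-diagonal entry $\tfrac12(|\e_b|^2-|\e_a|^2-|\e_c|^2)$, precisely matching $\mathbb B_F$. The reference vectors $\widehat{\bm\phi}_1,\widehat{\bm\phi}_2,\widehat{\bm\phi}_3$ coincide with $\pm\widehat{\tilde{\bm\phi}}_{12},\pm\widehat{\tilde{\bm\phi}}_{23},\pm\widehat{\tilde{\bm\phi}}_{13}$, the signs being immaterial in a bilinear form, and the remaining numerical factor combines the area Jacobian $2|F|$ with $|\widehat F|=\tfrac12$ to yield the claimed prefactor $\tfrac{|F|}{2}$.

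The main obstacle is bookkeeping rather than analysis: (i) because $F$ is embedded in $\R^3$ the Jacobian $A$ is not square, so one must work with the pseudo-inverse $A(A^TA)^{-1}$ throughout instead of $A^{-T}$; (ii) the ordering and orientation of the boundary edges $\e_a,\e_b,\e_c$ relative to the reference labels must be chosen compatibly so that the rows and columns of $(A^TA)^{-1}$ land in the positions of $\mathbb B_F$ asserted in the statement; and (iii) the sign choices in the reference basis $\widehat{\bm\phi}_\ell$ reflect the orientation of the boundary edges relative to the face normal. None of these obstacles is conceptual, but each requires care to avoid sign or permutation errors.
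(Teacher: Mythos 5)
Your proposal is correct and takes essentially the same route as the paper: part 1 follows from the facewise constancy of $\curlS \bm\phi_{\e}$ in Lemma~\ref{lemma:surfaceBasis}, and part 2 is the affine pull-back to $\widehat F$ with the law of cosines (your polarization identity) turning the metric $(A^TA)^{-1}=B_F^{-1}B_F^{-T}$ into the edge-length matrix $\mathbb{B}_F$. The only cosmetic difference is that the paper first flattens the tangential fields to a planar 2D problem so the Jacobian $B_F$ is square, whereas you keep $F$ embedded in $\R^3$ and carry the pseudo-inverse $A(A^TA)^{-1}$; the two are equivalent.
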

\begin{proof} First of all, because we are dealing with tangent fields it is enough to show the 2D case. 
Let us consider a face $F=\mathrm{conv}\{\v_1,\v_2,\v_3 \}$, where $\v_j=(x_j,y_j)^T$ for $j\in\{1,2,3 \}$, and the reference face $\widehat F=\mathrm{conv}\{(0,0)^T,(1,0)^T,(0,1)^T\}$ with associated barycentric coordinates $\widehat \lambda_1, \widehat \lambda_2, \widehat \lambda_3$, and the affine map
\begin{align*}
 \eta_F:\widehat F &\mapsto F,\\
 \begin{pmatrix}
  \widehat x \\  \widehat  y
 \end{pmatrix}
 &\mapsto 
  \begin{pmatrix}
  x_2-x_1 & x_3-x_1 \\  y_2-y_1 & y_3-y_1
 \end{pmatrix}
 \begin{pmatrix}
  \widehat x \\  \widehat  y
 \end{pmatrix}
 +
  \begin{pmatrix}
  x_1 \\ y_1
 \end{pmatrix}.
\end{align*}

\begin{figure}[h!]
 \centering
 \includegraphics[scale=0.7]{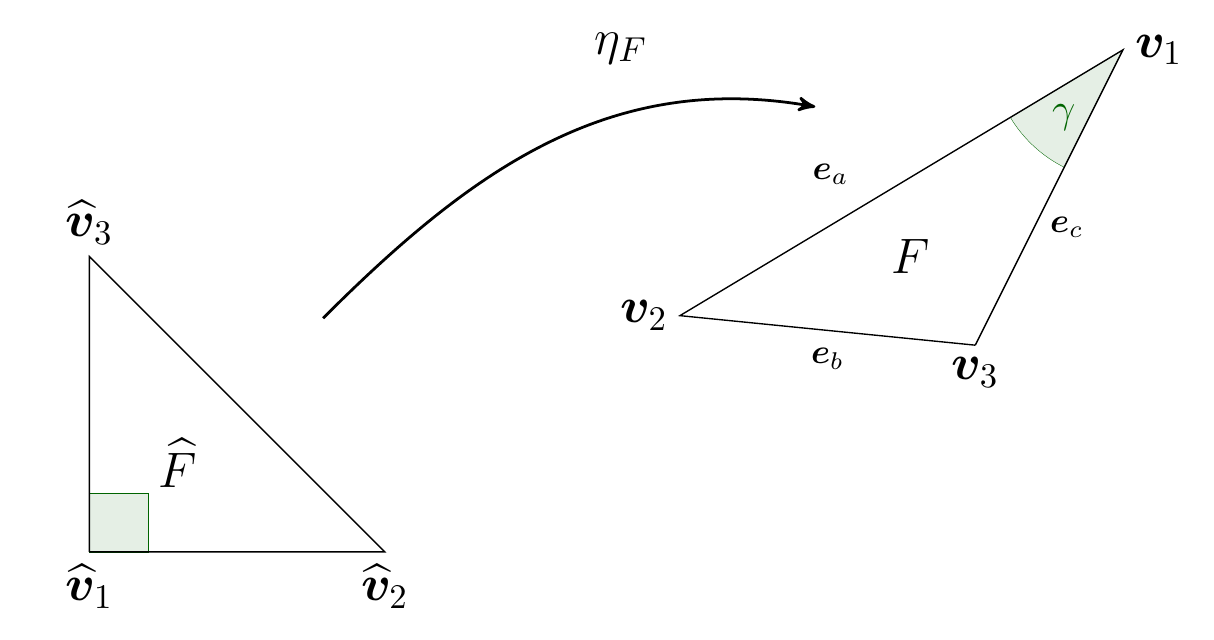}
 \caption{Af{}f{}ine transformation between $\widehat F$ and $F$}
 \label{fig:AffineMap}
\end{figure}
Let us assume the edges of $F$ have length $|\e_a|,|\e_b|$ and $|\e_c|$, see Figure \ref{fig:AffineMap}. And, as usual, define  
 \begin{align}
    B_F=
    \begin{pmatrix}
     x_2-x_1 & x_3-x_1\\
     y_2-y_1 & y_3-y_1
    \end{pmatrix}.
 \end{align}
 It straightforward to show 
 \begin{align*}
  det(B_F)=& |F|/|\widehat F|=2|F|,\\
      B_F^{-1}=& \frac{1}{2|F|}
    \begin{pmatrix}
     y_3-y_1 & x_1-x_3\\
     y_1-y_2 & x_2-x_1
    \end{pmatrix},\\
     B_F^{-1}B_F^{-T}=& \frac{1}{4|F|^2}
    \begin{pmatrix}
     |\bm e_a|^2                        & \frac{|\e_b|^2-(|\bm e_a|^2+|\bm e_c|^2)}{2}\\
     \frac{|\e_b|^2-(|\bm e_a|^2+|\bm e_c|^2)}{2} & |\bm e_c|^2
    \end{pmatrix},
 \end{align*}
where we have used $\overrightarrow{\v_1 \v_2}\cdot \overrightarrow{\v_1 \v_3}=|\e_a||\e_c|\cos(\gamma)$, see Figure \ref{fig:AffineMap}, and 
the law of cosines
\begin{align*}
 |\e_b|^2=& |\e_a|^2+|\e_c|^2-2|\e_a||\e_c|\cos(\gamma).
\end{align*}
In practice we replace $|\e_j|$ by $\pm|\e_j|$, according to the orientation of the edge on $F$. The rest of the proof follows from the fact that the elements of our basis are up to a constant, the length of the edge, are the same as the standard lowest order N\'ed\'elec basis of the first kind. 
\end{proof}

A different approach to compute the above mentioned quantities can be done in terms of $\bm \phi_\e\times \n$, for the 2D case, this can be found in \cite[sec. 4.2-4.3]{MR2194203}.

\subsection{Discrete Adjoint equation and optimality conditions}
\label{s:discopt}

Because for $\w_h\in \Nkh$, \newline ${{\bm \mu}^{-1}\nabla \times \w_h\notin \Hcurl}$, we cannot apply the same strategy as for the continuous adjoint $\S^*\!\!,$ cf. Theorem \ref{thm:adjoint}. To overcome this problem, we  f{}irst consider a discretization of the continuous adjoint problem, cf. (\ref{eq:adjoint}): f{}ind $\w_h \in \mathcal{N}^h_{k0}$ such that, 
	\begin{align}\label{eq:DiscreteAdjoint}
	\begin{aligned}
		a^*(\w_h,\v_h)
         &= (\S_h\z_h-\u_d,\v_h)_{0,\Omega} \quad \forall \v_h \in \mathcal{N}^h_{k0},  \\
	\end{aligned}	                    
	\end{align}
and as in the continuous case, for $\bm \xi_h$ and $\z_h$ in $Z_h$ we want to simplify the term 
\begin{align*}
 (\S_h\z_h-\u_d,\S_h\bm \xi_h)_{0,\Omega}
\end{align*}
which appears in the definition of $d^\R j_h$, so it does not require to compute/assemble the term $\S_h\bm \xi_h$, for each feasible direction $\bm \xi_h$. To do that, we apply the following splitting 
\begin{align*}
 \S_h\bm \xi_h= \S_{h0}\bm \xi_h + \Lh\bm \xi_h,
\end{align*}
where $\S_{h0}\bm \xi_h  \in \mathcal{N}^h_{k0} $, and $\Lh$ is the lifting operator defined in (\ref{def:lifting}). Thus, we define 
\begin{align}
\begin{aligned}
\left\langle\S_{h}^*(\S\z-\u_d),\bm \xi\right\rangle_{\Gamma^*}&:=
(\S_h\z-\u_d,\S_h\bm \xi)_{0,\Omega}\\
  &=  (\S_h\z-\u_d,\S_{h0}\bm \xi)_{0,\Omega}+ (\S_h\z-\u_d,\Lh\bm \xi)_{0,\Omega}\\
  &\stackrel{(\ref{eq:DiscreteAdjoint})}{=}  
   a^*( \w_h, \S_{h0}\bm \xi)+ (\S_h\z-\u_d,\Lh\bm \xi)_{0,\Omega}\\
   &\stackrel{(\ref{def:AdjointForm})}{=}  
   \overline{a( \S_{h0}\bm \xi, \w_h)}+ (\S_h\z-\u_d,\Lh\bm \xi)_{0,\Omega}\\
  &=
  -\overline{a(\Lh\bm \xi, \w_h)}+ (\S_h\z-\u_d,\Lh\bm \xi)_{0,\Omega}, 
\end{aligned}\label{def:discreteAdjoint}
\end{align}
where the last identity follows from (\ref{eq:DiscreteState}), taking $\v_h=\w_h.$ 
Note that  the support of $\Lh\bm \xi_h$ is contained just in a small $h$-neighborhood of $\Gamma_h$.
\begin{theorem}
 The f{}irst-order optimality condition (\ref{eqn:1stOrder}) implies in the discrete setting that:   $\barz_h\in Z^h$ is an optimal control  
 of (\ref{def:Discrete_reduced}) if and only if
  \begin{align*}
  &\baru_h=\,\S_h\barz_h,\\
  &\mathrm{Re}\left\{
  \left\langle\S_{h}^*(\baru_h-\u_d),\z_h-\barz_h\right\rangle_{\Gamma^*}+
  \alpha \left(\curlS \barz_h,\curlS (\z_h-\barz_h)\right)_{0,\Gamma}
  + \beta \left(\barz_h,\z_h-\barz \right)_{0,\Gamma}
  \right\}= 0, \quad \forall \z_h \in Z^h.
 \end{align*}
\end{theorem}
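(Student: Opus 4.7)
The plan is to mirror the continuous optimality derivation in Theorem~\ref{thm:adjoint} and its corollary, now applied to the finite-dimensional functional $j_h$ on the complex Hilbert subspace $Z^h \subseteq Z$. First I would observe that $j_h$ is real convex on $Z^h$: the map $\z_h \mapsto \S_h\z_h$ is affine, so $\|\S_h\z_h - \u_d\|_{0,\Omega}^2$ is convex, and the two regularization terms are manifestly convex. Thus both directions of Theorem~\ref{thm:optimality} apply, and $\barz_h \in Z^h$ is optimal if and only if
\begin{align*}
\mathrm{d}^{\mathbb{R}} j_h(\barz_h;\z_h - \barz_h) \geq 0 \qquad \forall\, \z_h \in Z^h.
\end{align*}

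Next I would exploit the fact that $Z^h = \bm R_h \cap Z$ is a \emph{linear} subspace of $\HcurlS$ (both $\bm R_h$ and $Z$ are linear, the zero-mean condition defining $Z$ being preserved by $\mathcal{N}_0(\Gamma_h)$ in view of Lemma~\ref{lemma:surfaceBasis}). Consequently, as $\z_h$ ranges over $Z^h$, the direction $\bm\xi_h := \z_h - \barz_h$ ranges over all of $Z^h$, and replacing $\bm\xi_h$ by $-\bm\xi_h$ in the inequality above upgrades it to the equality
\begin{align*}
\mathrm{d}^{\mathbb{R}} j_h(\barz_h;\z_h - \barz_h) = 0 \qquad \forall\, \z_h \in Z^h.
\end{align*}

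It then remains to unpack this equality using formula \eqref{def:dj_h}, which gives
\begin{align*}
\mathrm{Re}\!\left\{(\S_h\barz_h - \u_d, \S_h(\z_h - \barz_h))_{0,\Omega} + \alpha(\curlS \barz_h, \curlS(\z_h-\barz_h))_{0,\Gamma} + \beta(\barz_h, \z_h-\barz_h)_{0,\Gamma}\right\} = 0.
\end{align*}
Setting $\baru_h := \S_h\barz_h$ and invoking the definition of the discrete adjoint action in \eqref{def:discreteAdjoint}, which replaces the troublesome term $(\S_h\barz_h - \u_d, \S_h(\z_h-\barz_h))_{0,\Omega}$ by $\langle \S_h^*(\baru_h - \u_d), \z_h - \barz_h\rangle_{\Gamma^*}$ via the splitting $\S_h = \S_{h0} + \Lh$ together with $\w_h$ solving the discrete adjoint \eqref{eq:DiscreteAdjoint}, yields the stated identity.

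There is no substantive obstacle: the work of identifying $\S_h^*$ so that it avoids assembling $\S_h\bm\xi_h$ for each test direction is already packaged into \eqref{def:discreteAdjoint}, and convexity of $j_h$ makes Theorem~\ref{thm:optimality} apply in both directions. The only point worth a sentence of care is verifying that $Z^h$ is a linear (not merely affine) subspace so that the inequality-to-equality step is valid; this follows directly from the definitions and from Lemma~\ref{lemma:surfaceBasis}, which shows $\int_\Gamma \curlS \bm\phi_{\e}\,dS = 0$ for each basis function.
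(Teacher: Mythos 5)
Your proposal is correct and follows exactly the route the paper intends: the paper states this theorem without an explicit proof, relying on the convexity of $j_h$ together with Theorem~\ref{thm:optimality}, the formula \eqref{def:dj_h}, and the discrete adjoint identity \eqref{def:discreteAdjoint}, which is precisely the machinery you assemble (your observation that $Z^h=\bm R_h\cap Z$ is a genuine linear subspace, so the variational inequality upgrades to an equality, is the one step the paper leaves implicit and you handle it correctly).
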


\subsection{Convergence of fully discrete scheme}
\label{s:convan}

\begin{theorem}\label{thm:ctrlconv}
 Let $\{\barz_h \}_h$ be the family of discrete optimal controls related to $\{\Th \}_h$ then
 \begin{enumerate}
  \item $\{\barz_h\}_h$ is uniformly bounded, 
  \item there exists a subsequence  $\left\{\barz_\tth \right\}_\tth$ of $ \{\barz_h \}_h$ such that
  \begin{align*}
   \|\z_\tth-\barz\|_{\curlS} \rightarrow 0, \quad \mbox{as } \tth \rightarrow 0 , 
  \end{align*}
  where $\barz$ is the unique solution to the continuous optimization problem (\ref{eq:reduced_cost}). 
 \end{enumerate}
\end{theorem}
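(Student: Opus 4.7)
My plan has three parts: a uniform bound on the discrete minimizers, identification of a subsequential weak limit with the continuous optimum $\barz$, and an upgrade from weak to strong convergence in the $\curlS$-norm.

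For (1), I would exploit that $\bm 0 \in Z^{h}$ is admissible (it lies in $\bm R_h$ and trivially satisfies $\curlS\bm 0 \in L_0^2(\Gamma)$). Optimality of $\barz_h$ gives
\begin{align*}
j_h(\barz_h) \;\leq\; j_h(\bm 0) \;=\; \tfrac12\|\S_h(\bm 0)-\u_d\|_{0,\Omega}^2 \;=\; \tfrac12\|\u_d\|_{0,\Omega}^2,
\end{align*}
since $\S_h(\bm 0)=\bm 0$ by uniqueness (recall we have reduced to $\jc\equiv \bm 0$). Coercivity of the regularization then yields $\alpha\|\curlS\barz_h\|_{0,\Gamma}^2 + \beta\|\barz_h\|_{0,\Gamma}^2 \leq \|\u_d\|_{0,\Omega}^2$, establishing the uniform $Z$-bound.

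For (2), I extract a weakly convergent subsequence (not relabeled) $\barz_{\tth}\rightharpoonup \widehat{\z}$ in $Z$, and aim to show $\widehat{\z}=\barz$ by a standard $\Gamma$-convergence-style sandwich. Given $\z\in Z$, I construct a recovery sequence $\z_{\tth}\in Z^{\tth}$ with $\z_{\tth}\to \z$ in the $\curlS$-norm by combining the lowest-order surface N\'ed\'elec quasi-interpolant of $\z$ with a small finite-dimensional correction enforcing $\curlS\z_{\tth}\in L_0^2(\Gamma)$. Combining \eqref{eq:error_estimate} with the continuous embedding $\HcurlS\hookrightarrow H^{-{\frac 12}}_\perp(\curlS;\Gamma)$ then gives $\|\S_{\tth}\z_{\tth}-\S\z\|_{\curl,\Omega}\to 0$. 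In parallel, the uniform bound from (1) together with the uniform continuity of $\S_h$ shows $\S_{\tth}\barz_{\tth}$ is bounded in $\Hcurl$; a further subsequence converges weakly to some $\widehat{\u}$, and passing to the limit in the discrete weak form \eqref{eq:DiscreteState} with conforming test functions (whose density in $\Hzcurl$ is standard) identifies $\widehat{\u}=\S\widehat{\z}$. Weak lower semicontinuity of each of the three non-negative terms in $j_h$ then yields
\begin{align*}
j(\widehat{\z}) \;\leq\; \liminf_{\tth\to 0} j_{\tth}(\barz_{\tth}) \;\leq\; \limsup_{\tth\to 0} j_{\tth}(\barz_{\tth}) \;\leq\; \lim_{\tth\to 0} j_{\tth}(\z_{\tth})\;=\;j(\z), \qquad \forall \z\in Z,
\end{align*}
so $\widehat{\z}$ minimizes $j$, and uniqueness forces $\widehat{\z}=\barz$. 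Choosing $\z=\barz$ in this chain also gives $j_{\tth}(\barz_{\tth})\to j(\barz)$. Since each of $\tfrac12\|\S_{\tth}\barz_{\tth}-\u_d\|_{0,\Omega}^2$, $\tfrac{\alpha}{2}\|\curlS\barz_{\tth}\|_{0,\Gamma}^2$, and $\tfrac{\beta}{2}\|\barz_{\tth}\|_{0,\Gamma}^2$ is weakly l.s.c.\ and bounded above in the limit by the matching term of $j(\barz)$, each must converge individually. In particular $\|\barz_{\tth}\|_{0,\Gamma}\to \|\barz\|_{0,\Gamma}$ and $\|\curlS\barz_{\tth}\|_{0,\Gamma}\to \|\curlS\barz\|_{0,\Gamma}$; combined with weak convergence in the Hilbert space $L^2(\Gamma)$, this upgrades to strong $L^2(\Gamma)$ convergence of both $\barz_{\tth}$ and $\curlS\barz_{\tth}$, which is exactly $\|\barz_{\tth}-\barz\|_{\curlS}\to 0$.

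The main obstacle I anticipate is the construction of the recovery sequence $\z_{\tth}\in Z^{\tth}$: a candidate must simultaneously (i) respect the tangential continuity across boundary edges built into $\bm R_h$, (ii) satisfy the zero-mean constraint $\curlS\z_{\tth}\in L_0^2(\Gamma)$, and (iii) converge in the full $\curlS$-norm. Density of $\bm R_h$ in $\HcurlS$ itself is standard from surface N\'ed\'elec interpolation, but enforcing the zero-mean side condition demands an auxiliary finite-dimensional correction whose contribution must be shown to vanish in the $\curlS$-norm as $\tth\to 0$. A secondary subtlety is passing to the limit in the discrete state equation when test functions themselves live only in the discrete spaces at each mesh level.
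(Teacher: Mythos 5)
Your proof of part (1) is the same as the paper's. For part (2) you take a genuinely different route. The paper works through the first-order optimality system: it extracts a weak limit $\widehat{\z}$ of $\barz_\tth$ in $\HcurlS$, upgrades to strong convergence in $H^{-\frac12}_\perp(\curlS;\Gamma)$ by compactness, deduces $\|\S_\tth\barz_\tth-\S\widehat{\z}\|_{\curl,\Omega}\to 0$ from \eqref{eq:error_estimate}, proves convergence of the discrete adjoint states via Strang's first lemma, and then passes to the limit in the discrete optimality condition (using density of $Z^h$) to conclude that $\widehat{\z}$ satisfies the continuous optimality condition and hence equals $\barz$ by uniqueness. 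You instead run a $\Gamma$-convergence sandwich on the reduced functionals with a recovery sequence, bypassing the adjoint equation altogether. Your route buys something real: the convergence $j_\tth(\barz_\tth)\to j(\barz)$, combined with weak lower semicontinuity of each of the three nonnegative terms, forces convergence of the individual norms, and the Radon--Riesz argument then delivers the strong $\curlS$-convergence that the theorem actually asserts --- a step the paper's proof, as written, never carries out (it stops at identifying the weak limit). On the other hand, your anticipated main obstacle is smaller than you think: by Lemma~\ref{lemma:surfaceBasis} every lowest-order surface N\'ed\'elec basis function already has zero-mean surface curl, so $\bm R_h\subset Z$ and no auxiliary finite-dimensional correction is needed to enforce the constraint $\curlS\z_\tth\in L^2_0(\Gamma)$; what remains is the standard question of density of $Z^h$ in $Z$ for the full $\curlS$-norm, which is essentially the same approximation property the paper itself invokes. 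Finally, your identification of the weak limit of $\S_\tth\barz_\tth$ by passing to the limit in \eqref{eq:DiscreteState} is needlessly delicate because of the nonhomogeneous discrete boundary condition; the paper's shortcut --- strong convergence of the discrete states obtained directly from \eqref{eq:error_estimate} together with the compact embedding of $\HcurlS$ into $H^{-\frac12}_\perp(\curlS;\Gamma)$ --- is cleaner and already available to you from part (1).
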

\begin{proof}
 The first part of the proof  follows from 
 \begin{align}
  j_h(\barz_h)&\leq j_h(\bm 0)\Rightarrow { \frac{\min\{\alpha,\beta\}}{2} } \| \barz_h \|^2_{\curlS}\leq \frac{\alpha}{2}\| \curlS\barz_h\|^2_{0,\Gamma}+\frac{\beta}{2}\| \barz_h\|^2_{0,\Gamma} \leq  \|\u_d\|^2_{0,\Omega}.
 \end{align}
 Since  $\HcurlS$ is a  Hilbert space, $ \{\barz_h \}_h$ has  a weakly convergent subsequence i.e., there  exists 
 $\ttz \in \HcurlS$, such that
 \begin{align}
  \barz_\tth\rightharpoonup \ttz, \quad \mbox{ as } \tth\rightarrow 0.
 \end{align}
Now, since the injection of $\HcurlS$ into  $H^{-{\frac 12}}_{\perp}(\curlS;\Gamma)$ is compact, therefore 
\begin{align}
 \|\barz_\tth-\ttz\|_{H^{-{\frac 12}}_{\perp}(\curlS;\Gamma)}\xrightarrow{\tth} 0,~  \mbox{ when } \overline{\z}_h\rightharpoonup \ttz \mbox{ in } \HcurlS.
\end{align}
In turn, from (\ref{eq:error_estimate}) we get $\|\S_\tth\barz_\tth-\S\ttz\|_{\curl,\Omega}\xrightarrow{\tth}0$. In fact, 
$$\|\S_\tth\barz_\tth-\S\ttz\|_{\curl,\Omega}\lesssim \mathrm{dist}(\S\ttz,\Nkh)+\|\barz_\tth-\ttz\|_{H^{-{\frac 12}}_{\perp}(\curlS;\Gamma)}\xrightarrow{\tth}0.$$
Now, we will show convergence of the solutions  for the discrete adjoint problem, cf. (\ref{eq:DiscreteAdjoint}). In order to do that, let us consider $\widetilde{\w}\in \Hzcurl$ to be the unique solution of
\begin{align*}
 a^*(\widetilde{\w}, \v)=(\S\ttz-\u_d,\v) \quad \forall \v\in \Hzcurl,
\end{align*}
and   functionals $\ell$ and $\ell_h$ given by
\begin{align*}
 \ell(\v)=&~ (\S\ttz-\u_d,\v)_{0,\Omega} \quad  \forall\v\in \Hzcurl,\\
  \ell_h(\v_h)=&~ (\S_h\barz_h-\u_d,\v_h)_{0,\Omega} \quad  \forall \v_h\in \mathcal{N}^h_{k0}.
\end{align*}
Thus, from  Strang's f{}irst lemma we obtain that   
\begin{align*}
 \|\widetilde{\w}-\barw_\tth \|_{\curl,\Omega}\lesssim&
 \inf_{\v_\tth \in \mathcal{N}^\tth_{k0}}  \|\widetilde{\w}-\v_\tth \|_{\curl,\Omega}
 + \sup_{\v_\tth \in \mathcal{N}^\tth_{k0}} \left |\frac{\ell(\v_\tth)-\ell_h(\v_\tth)}{\|\v_\tth\|_{\curl,\Omega}} \right|\\
 \lesssim& 
  \inf_{\v_\tth \in \mathcal{N}^\tth_{k0}}  \|\widetilde{\w}-\v_\tth \|_{\curl,\Omega}+ \|\S\ttz - \S_\tth\barz_\tth\|_{\curl,\Omega}\xrightarrow{\tth}0.
\end{align*}
On the other hand, it is clear that for each $\bm p\in Z$ we have that 
\begin{align*}
 \alpha \left(\curlS \barz_h,\curlS \bm p\right)_{0,\Gamma}
  + \beta \left(\barz_h,\bm p \right)_{0,\Gamma}\xrightarrow{\tth} 
  \alpha \left(\curlS \ttz,\curlS \bm p\right)_{0,\Gamma}
  + \beta \left(\ttz,\bm p \right)_{0,\Gamma}.
\end{align*}
In turn,  $\{Z^h\}_h$ is dense in $\HcurlS$; moreover, $\{Z^h\}_h$ is dense in $H^{-{\frac 12}}_{\perp}(\curlS;\Gamma)$ which follows from  \cite[Corollary 5]{Buffa2003}. Thus, for a given $\bm p\in Z$ we define $\bm p_h$ as its best approximation  in $Z^h$, then 
\begin{align*}
 \left\langle\S_{\tth}^*(\S_\tth\barz_\tth-\u_d),\bm p_\tth\right\rangle_{\Gamma^*}
             &=(\S_\tth\barz_\tth-\u_d,\S_\tth\bm p_\tth)_{0,\Omega}\\
             &=(\S_\tth\barz_\tth-\u_d,\S_\tth\bm p_\tth-\S \bm p)_{0,\Omega}+(\S_\tth\barz_\tth-\u_d,\S\bm p)_{0,\Omega}.
\end{align*}
Thus, because $\S_h\barz_h$ is bounded (independently of $h$) we conclude   
\begin{align*}
 \left\langle\S_{\tth}^*(\S_\tth\barz_\tth-\u_d),\bm p_\tth\right\rangle_{\Gamma^*} \xrightarrow{\tth}  
  \left\langle\S^*(\S\ttz-\u_d),\bm p\right\rangle_{\Gamma^*}.
\end{align*}
Finally, 
\begin{align*}
  \mathrm{Re}\left\{ \left\langle\S^*(\S\ttz-\u_d),\bm p\right\rangle_{\Gamma^*}+ 
  \alpha \left(\curlS \ttz,\curlS \bm p\right)_{0,\Gamma}
  + \beta \left(\ttz,\bm p \right)_{0,\Gamma}
  \right\}=0 \quad \forall \bm p\in Z,
\end{align*}
and by uniqueness of local minimum we conclude $(\ttz,\S\ttz,\widetilde{\w})=(\barz,\baru,\barw).$
\end{proof}

\subsection{Using $\| \curlS {\bf z}\|^2_{0,\Gamma}$ as the regularization term}
\label{s:reg}
In this section we study the problem 
\begin{align}\label{eq:reduced_cost_2}
            \displaystyle  \min_{\z\in Z^2} \widetilde{j}(\z) =  
            \ds  \min_{\z\in Z} \frac{1}{2}\int_\Omega |\Spo \z-\widehat{\u}_d|^2\,d\x+\frac{\alpha}{2}\int_{\Gamma} |\curlS\z|^2\,dS,
\end{align}
where 
\begin{align}
\label{eq:Zad2}
  Z^2:=&  \bigl\{\z\in \HcurlS:    \curlS \z\in L_0^2(\Gamma) \bigr\}\cap  \mathrm{ker}(\curlS)^\perp.
\end{align}
The continuous analysis for this problem follows from the previous case, given that 
\begin{align}
\|\z\|_{0,\Gamma}^2 + \|\curlS\z\|_{0,\Gamma}^2 \lesssim  \|\curlS\z\|_{0,\Gamma}^2 \qquad \forall \z \in Z^2,
\end{align}
which follows from the open mapping theorem \cite[Corollary~2.7]{HBrezis_2011a}, and the  following result
\begin{lemma} \label{lemma:decoL2t} The operators 
\begin{align*}
 \divS:\Lt\mapsto H^{-1}(\Gamma)/\mathbb{R}, \mbox{ and }~ \curlS:\Lt\mapsto H^{-1}(\Gamma)/\mathbb{R}
\end{align*}
 are  linear, continuous and surjective. Moreover,
 \begin{align*}
  \mathrm{ker}(\divS)=&~\{ {\bf curl_\Gamma}\varphi:\, \varphi\in H^1(\Gamma) \},\\
  \mathrm{ker}(\curlS)=&~\{ \nabla_\Gamma\varphi:\, \varphi\in H^1(\Gamma) \}.
 \end{align*}
\end{lemma}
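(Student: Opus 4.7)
The plan is to verify the four assertions (linearity, continuity, surjectivity, and the kernel characterizations) in order, relying on surface integration by parts and an inversion of the Laplace--Beltrami operator on $\Gamma$. For the whole argument I will use the distributional definitions
$\langle \divS \z, \varphi\rangle := -(\z,\nabla_\Gamma \varphi)_{0,\Gamma}$ and $\langle \curlS \z, \varphi\rangle := (\z,\bm\curl_\Gamma \varphi)_{0,\Gamma}$ for $\varphi \in H^1(\Gamma)$, together with the adjoint relations $\bm\curl_\Gamma = \nabla_\Gamma^{\perp}$ and the pointwise identities $\divS\bm\curl_\Gamma \equiv 0$ and $\curlS\nabla_\Gamma \equiv 0$ on $\Gamma$, all of which are established in \cite{buffa:I,buffa:II}.

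First I would observe that linearity is immediate, and that continuity from $\Lt$ into $H^{-1}(\Gamma)$ follows from Cauchy--Schwarz applied to the above pairings together with the boundedness of $\nabla_\Gamma$ and $\bm\curl_\Gamma$ from $H^1(\Gamma)$ into $\Lt$. Because $\Gamma$ is a closed Lipschitz surface, both pairings annihilate the constants ($\nabla_\Gamma 1 = \bm\curl_\Gamma 1 = \bm 0$), so $\divS \z$ and $\curlS \z$ define elements of the annihilator of $\mathbb{R}$ in $H^{-1}(\Gamma)$, which is naturally identified with $H^{-1}(\Gamma)/\mathbb{R} \simeq (H^1(\Gamma)/\mathbb{R})^*$. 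This is exactly the codomain in the statement.

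Next I would prove surjectivity by solving an elliptic problem for the Laplace--Beltrami operator. Given $f \in (H^1(\Gamma)/\mathbb{R})^*$, the bilinear form $(\varphi,\psi)\mapsto (\nabla_\Gamma\varphi,\nabla_\Gamma\psi)_{0,\Gamma}$ is coercive on $H^1(\Gamma)/\mathbb{R}$ by the Poincar\'e inequality on the closed Lipschitz surface $\Gamma$, so Lax--Milgram produces a unique $\varphi \in H^1(\Gamma)/\mathbb{R}$ with $(\nabla_\Gamma \varphi,\nabla_\Gamma \psi)_{0,\Gamma} = \langle f,\psi\rangle$ for all $\psi$. Setting $\z := -\nabla_\Gamma \varphi \in \Lt$ yields $\divS \z = f$ by the defining pairing; the same $\varphi$ with $\z := \bm\curl_\Gamma \varphi \in \Lt$ gives $\curlS \z = f$ for the curl case. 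This proves surjectivity of both operators and, along the way, exhibits a bounded right inverse.

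Finally, for the kernel characterizations the inclusions $\bm\curl_\Gamma H^1(\Gamma) \subseteq \ker(\divS)$ and $\nabla_\Gamma H^1(\Gamma) \subseteq \ker(\curlS)$ follow directly from the pointwise identities recalled above. The reverse inclusions are the substantive content and constitute the main obstacle: they amount to the surface Helmholtz/Hodge decomposition on a (simply connected) Lipschitz polyhedral surface. I would invoke the decomposition $\Lt = \nabla_\Gamma H^1(\Gamma) \oplus \bm\curl_\Gamma H^1(\Gamma)$ established in \cite{buffa:I} (see also \cite[Ch.~16]{MR3930592}), which is valid precisely because $\Gamma$, as the boundary of a simply connected polyhedron, is topologically a sphere and has no nontrivial harmonic tangential fields. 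Writing $\z = \nabla_\Gamma p + \bm\curl_\Gamma q$ with $p,q \in H^1(\Gamma)$ and applying $\divS$ (respectively $\curlS$) to $\z$ forces $-\Delta_\Gamma p = 0$ (respectively $-\Delta_\Gamma q = 0$), hence $p$ (resp.\ $q$) is constant, yielding $\z = \bm\curl_\Gamma q$ (resp.\ $\z = \nabla_\Gamma p$). The delicate point to cite carefully is thus the validity of this orthogonal decomposition on a Lipschitz polyhedron, which is exactly what the Buffa--Ciarlet theory provides.
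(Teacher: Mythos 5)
Your proposal is correct, but it is doing substantially more work than the paper does: the paper's entire proof is a one-line citation to Definition~2.3, Remark~3.2 and Proposition~3.1 of \cite{buffa:II}, whereas you reconstruct the argument from scratch. Your reconstruction is sound and well organized: continuity via the duality pairings against $\nabla_\Gamma$ and $\bm\curl_\Gamma$, the observation that both operators annihilate constants (which is what makes $H^{-1}(\Gamma)/\mathbb{R}\simeq (H^1(\Gamma)/\mathbb{R})^*$ the right codomain), surjectivity by inverting the Laplace--Beltrami operator with Lax--Milgram and using the pointwise identity $\bm\curl_\Gamma\varphi\cdot\bm\curl_\Gamma\psi=\nabla_\Gamma\varphi\cdot\nabla_\Gamma\psi$, and the kernel characterizations via the Hodge decomposition $\Lt=\nabla_\Gamma H^1(\Gamma)\oplus\bm\curl_\Gamma H^1(\Gamma)$. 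You also correctly isolate where the real content lies: the easy kernel inclusions are pointwise identities, while the reverse inclusions \emph{are} the Hodge decomposition, which you still have to import from \cite{buffa:I,buffa:II} --- so in the end both your route and the paper's rest on the same Buffa--Ciarlet results, you simply make the reduction explicit. What your version buys is transparency about the hypotheses: you flag that the decomposition (and hence the lemma as stated, with no harmonic component in the kernels) requires $\Gamma$ to carry no nontrivial harmonic tangential fields, i.e.\ a genus-zero boundary, an assumption the paper leaves implicit. The one point to be careful about, which you partially acknowledge, is that on a Lipschitz \emph{polyhedron} the operators $\nabla_\Gamma$, $\bm\curl_\Gamma$, $\divS$, $\curlS$ are defined facewise with matching conditions across edges, so the integration-by-parts formulas and the facewise rotation identity need the Buffa--Ciarlet framework to be legitimate globally; since you cite exactly those references for this, the argument stands.
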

\begin{proof}
 See Def{}inition 2.3, Remark 3.2 and Proposition 3.1 in \cite{buffa:II}.
\end{proof}
In turn, for the discrete case we have 
\begin{proposition} The set of discrete admissible controls, $Z^h\!,$ satisfies $Z^h\subset Z^2.$
\end{proposition}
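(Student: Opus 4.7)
The plan is to verify, for an arbitrary $\z_h\in Z^h$, the two conditions that define $Z^2$. The first condition, namely $\z_h\in Z$ (i.e.\ $\z_h\in \HcurlS$ with $\curlS\z_h\in L^2_0(\Gamma)$), is immediate by construction, since $Z^h = \bm R_h\cap Z$. The only non-trivial step is therefore to establish $\z_h\in \mathrm{ker}(\curlS)^\perp$.

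By the decomposition lemma just above, $\mathrm{ker}(\curlS)=\{\nabla_\Gamma\varphi:\varphi\in H^1(\Gamma)\}$. Because $\Gamma$ is a closed surface (the boundary of the bounded polyhedron $\Omega$ carries no $1$-dimensional boundary), surface integration by parts gives
\[
\left(\z_h,\nabla_\Gamma\varphi\right)_{\Lt}\;=\;-\left\langle\divS\z_h,\varphi\right\rangle_{H^{-1}(\Gamma),H^1(\Gamma)},\qquad \varphi\in H^1(\Gamma),
\]
so the required orthogonality is equivalent to $\divS\z_h=0$ in $H^{-1}(\Gamma)$. I would verify this by writing $\z_h=\gamma_T\u_h$ for some $\u_h\in\mathcal{N}^h_j$, expanding $\z_h$ in the Rao--Wilton--Glisson-type basis $\{\bm\phi_{\bm e}\}_{\bm e\in \EGh}$ of Lemma~\ref{lemma:surfaceBasis}, and then using the identities $\bm\phi_{\bm e}=\n\times\bm\psi_{\bm e}$ together with $\divS(\n\times\bm w)=-\curlS\bm w$ for tangent fields $\bm w$. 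This reduces the computation of $\divS\z_h$ to a $\curlS$-type quantity on the Raviart--Thomas basis $\{\bm\psi_{\bm e}\}$, which is piecewise constant on $\Gamma_h$ and can be checked face-by-face and edge-by-edge.

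The main obstacle I anticipate is that tangential traces of N\'ed\'elec fields are not automatically surface-divergence-free, so the reduction above does not vanish for generic coefficients. Closing the argument therefore requires exploiting the global compatibility coming from $\z_h$ being the tangential trace of a \emph{conforming} element of $\mathcal{N}^h_j$, together with the zero-mean constraint $\curlS\z_h\in L^2_0(\Gamma)$ built into $Z$. My plan would be to use a discrete Hodge splitting of $\z_h$ on $\Gamma_h$ into a discrete-gradient piece (which contributes to $\divS$ but not to $\curlS$) and a complementary piece (which is divergence-free), and argue that the intersection with $\bm R_h\cap Z$ forces the gradient component to vanish, thereby delivering $\z_h\in Z^2$.
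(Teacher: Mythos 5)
Your reduction of the problem coincides with the paper's: membership of $Z^h$ in $Z$ is immediate (the paper gets it from $\int_\Gamma \curlS \bm\phi_{\e}\,dS=0$ in Lemma~\ref{lemma:surfaceBasis}), and, via the Hodge decomposition $\Lt=\nabla_\Gamma H^1(\Gamma)\bm\oplus \bm\curlS H^1(\Gamma)$ and Lemma~\ref{lemma:decoL2t}, orthogonality to $\mathrm{ker}(\curlS)=\nabla_\Gamma H^1(\Gamma)$ is converted into the statement $\divS Z^h=\{0\}$. Where the paper finishes, however, is exactly where your argument stops being a proof. The paper closes with a one-line face-wise computation on the basis $\{\bm\phi_{\e}\}$: since $\bm\phi_{\e}\times\n=\bm\psi_{\e}$ and $\bm\psi_{\e}|_{F_{\pm}}$ is a scalar multiple of the position vector $\bm r-\bm r(\v^{\pm}_{\e})$, one gets
$\divS\bm\phi_{\e}|_{F_{\pm}}=\curlS\bm\psi_{\e}|_{F_{\pm}}
=\pm\tfrac{|\e|}{|F_{\pm}|}\bigl(\curl(\bm r-\bm r(\v^{\pm}_{\e}))\bigr)\cdot\n_F=0$,
because $\curl\bm r\equiv\bm 0$. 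So the quantity you set up \emph{does} vanish coefficient-by-coefficient on every face; there is no ``generic coefficients'' obstruction at the level of the face-wise divergence, and no discrete Hodge splitting is invoked.

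The closing argument you propose in its place does not work. The zero-mean constraint $\curlS\z_h\in L^2_0(\Gamma)$ carries no information whatsoever about a discrete-gradient component of $\z_h$, precisely because gradients lie in $\mathrm{ker}(\curlS)$ and hence contribute nothing to $\curlS\z_h$; moreover $\bm R_h=\gamma_T\mathcal{N}^h_j$ does contain tangential traces of discrete gradients (e.g.\ $\nabla\lambda_i\in\mathcal{N}^h_0$ for Lagrange hat functions $\lambda_i$). So the intersection $\bm R_h\cap Z$ cannot ``force the gradient component to vanish,'' and your plan has a genuine gap at its final step. One caveat in your favor: your instinct that N\'ed\'elec tangential traces are not automatically surface-divergence-free points at a real subtlety, since the distributional $\divS$ of a piecewise polynomial tangent field also contains the jumps $[\z_h\cdot\bm\nu]$ of the normal component across the edges of $\Gamma_h$, and the paper's computation only treats the face-wise part. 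But the resolution you propose (the constraints built into $Z$) is not the missing ingredient; to complete the argument along the paper's lines you would have to examine those edge jumps directly, not appeal to the zero-mean condition.
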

\begin{proof}
 From Lemma \ref{lemma:surfaceBasis} it is clear that 
 \begin{align*}
  Z^h\subset \bigl\{\z\in \HcurlS:    \curlS \z\in L_0^2(\Gamma) \bigr\}.
 \end{align*}
On the other hand, we have the following Hodge decomposition for $\Lt$, cf. \cite[Thm. 3.4]{buffa:II},
\begin{align*}
 \Lt=\nabla_\Gamma H^1(\Gamma)\bm\oplus \bm \curlS H^1(\Gamma).
\end{align*}
Thanks to Lemma \ref{lemma:decoL2t}, to conclude the proof  we only need to show
\begin{align*}
 \divS Z^h=\{0 \}.
\end{align*}
In order to do that, let $\bm r(\bm x)$ be the position vector associated with the point $\bm x$, which satisfies $\curl \bm r\equiv \theta$. Then, given  an edge $\e\in \Eh$ and faces $F\!\!_{+}$ and $F\!\!_{-}$ in $\Gamma_h$ such that 
$\e=\overline{F}\!_{-}\cap \overline{F}\!_{+}$, cf. Figure \ref{fig:Psi_e}, we have 
\begin{align*}
 \divS \bm \phi_{\e}|_{F\!_{\pm}}= \curlS \bm \psi_{\e}|_{F\!_{\pm}}
 = \pm \frac{|\e|}{|F\!_{\pm}|}\curlS\left(  (\bm r-\bm r(\v^{\pm}_{\e}))\right)
 =\pm \frac{|\e|}{|F\!_{\pm}|} (\curl (\bm r-\bm r(\v^{\pm}_{\e}) ))\cdot \n_F=0,
\end{align*}
which concludes the proof.
\end{proof}

\section{Numerical results}
\label{s:num}

In this section we test our codes, developed in  \textsc{Matlab}$^\circledR$, for dealing with higher order N\'ed\'elec spaces with nonhomegeneous Dirichlet boundary conditions. For optimization, we use complex version of the BFGS algorithm \cite{Sorber12}.

\subsection{Code validation for N\'ed\'elec elements }
To test our code we consider the problem involving electrodes given in  \cite{BeRoSa2005-2,BeRoSa2005} but with a simpler Dirichlet boundary condition. Let us  
denote  $\bm H, \bm E$ and $\bm J$ be  the complex amplitude of magnetic field, electric field, and the current  density on a bounded domain $\Omega$, F{}igure \ref{fig:electrode}. And let us consider the  eddy current problem: find $(\bm H, \bm E,\bm J)$ such that 

\begin{minipage}[t]{0.3\textwidth}
  \begin{center}
  \includegraphics[width=2in,height=2.1in]{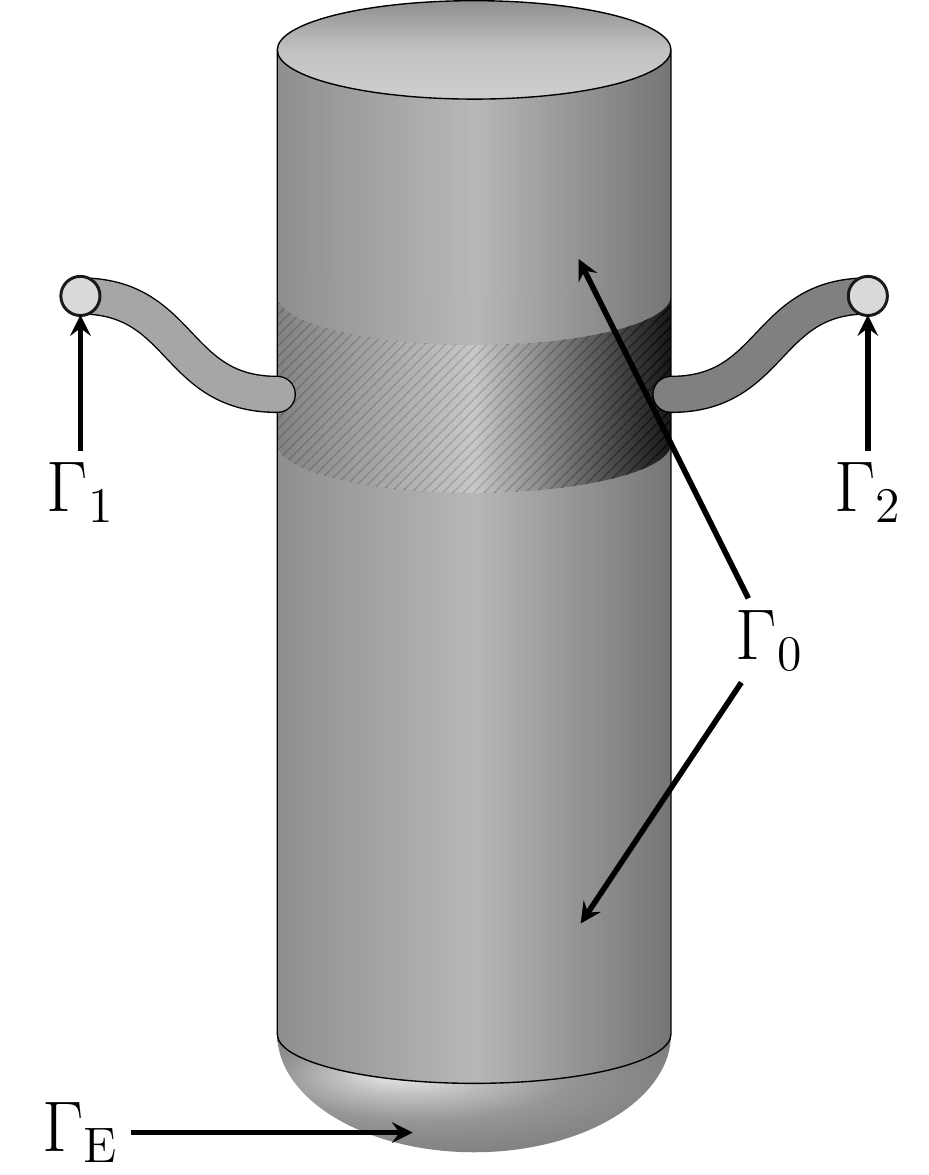}
 \captionof{figure}{Electrode}
  \label{fig:electrode}
  \end{center}
\end{minipage}\hspace*{3cm}
\begin{minipage}[t]{0.5\textwidth}
\begin{align}\label{eq:Rodolfo}
 \curl \bm H =&~ \bm J \quad   \mbox{ in } \Omega,\nonumber\\
 i\omega \mu \bm H + \curl \bm E =&~ \bm 0 \quad  \mbox{ in } \Omega,\nonumber\\
 \bm J =&~ \sigma \bm E  \quad  \mbox{ in } \Omega,\nonumber\\
\intertext{subject to:} 
  \bm E \times \n  =&~\bm 0 \qquad  \mbox{ on }  \Gamma_E,\\
  \int_{\Gamma_n} \bm J\cdot \n =&~ \iota_n, \qquad n\in \{1,..., N \},\nonumber\\
  \bm E \times \n =&~ \bm 0 \qquad \mbox{ on }  \bigcup_{k=1}^N\Gamma_k,\nonumber\\
  \bm J\cdot \n =&~ 0 \qquad \mbox{ on } \Gamma_0,\nonumber\\
  \bm H\cdot \n =&~ 0 \qquad \mbox{ on } \partial \Omega.\nonumber\\\nonumber
 \end{align}
\end{minipage}

In the particular case that, $\Omega$ is a cylinder of radius $R$ and  height $L$, $N=1$,   and the partition for the boundary $\Gamma$ is given by 
 \begin{align*}
  \Gamma_1=\{(x,y,z): z=L, ~x^2+y^2<R  \}, ~ \Gamma_{ E}=\{(x,y,z): z=0, ~x^2+y^2<R  \},
  \Gamma_0 = \Gamma \setminus \overline{(\Gamma_1\cup \Gamma_E)}. 
 \end{align*}
 Then it is possible to f{}ind an analytic solution for  (\ref{eq:Rodolfo}), cf.  \cite{BeRoSa2005-2,BeRoSa2005}, given by
\begin{align}\label{def:HEJ}
 \bm H(x,y,z) =&\frac{\iota_1}{2\pi R} \frac{I_1(\gamma r)}{I_1(\gamma R)}\bm e_\theta,~
 \bm E(x,y,z) = \frac{\iota_1\gamma}{2\pi R\sigma} \frac{I_0(\gamma r)}{I_1(\gamma R)}\bm e_z,~
 \mbox{ and } \bm J=\sigma \bm E,
\end{align}
where $I_\nu$ is the modif{}ied  Bessel functions of the f{}irst kind of order $\nu$, $\gamma :=\sqrt{i\omega \mu \sigma}$, 
$
 r:= \sqrt{x^2+y^2}$, $ 
 \bm e_\theta := r^{-1}\left(-y,x,0\right), \mbox{ and }
 \bm e_z:=(0,0,1).
$
Now, we approximate $\bm H$ with  $\mathcal{N}^h_{2}$ which satisfies \newline
${\mathcal{P}^3_2\varsubsetneq \mathcal{N}^h_{2} \varsubsetneq \mathcal{P}^3_3}$. Also, we set all the parameters equal to $1$ except for $R=\frac 12$, and then   we consider the problem: f{}ind $\bm H_h\in \mathcal{N}^h_{2}$ such that
\begin{align*}
 a(\bm H_h,\v_h)=&~\bm 0 \quad \forall \v_h \in \mathcal{N}^h_{2}\cap \Hzcurl,\\
 \bm H_h\times \n =&~ \bm H\times \n ~ \mbox{ on } \Gamma_h.
\end{align*}
The domain $\Omega$ was approximated with a family of meshes generated with Gmsh \cite{Gmsh}. Figure  \ref{fig:ErrorRodolfo}
shows a log-log plot for the error $\mathcal{E}_h:= \|\bm H-\bm H_h\|_{\curl,\Omega}$, along with the coarsest mesh considered, where the color represents the element-wise error $ \|\bm H-\bm H_h\|_{\curl,K}$.
\begin{figure}[H] 
 \centering
 \includegraphics[scale=0.5]{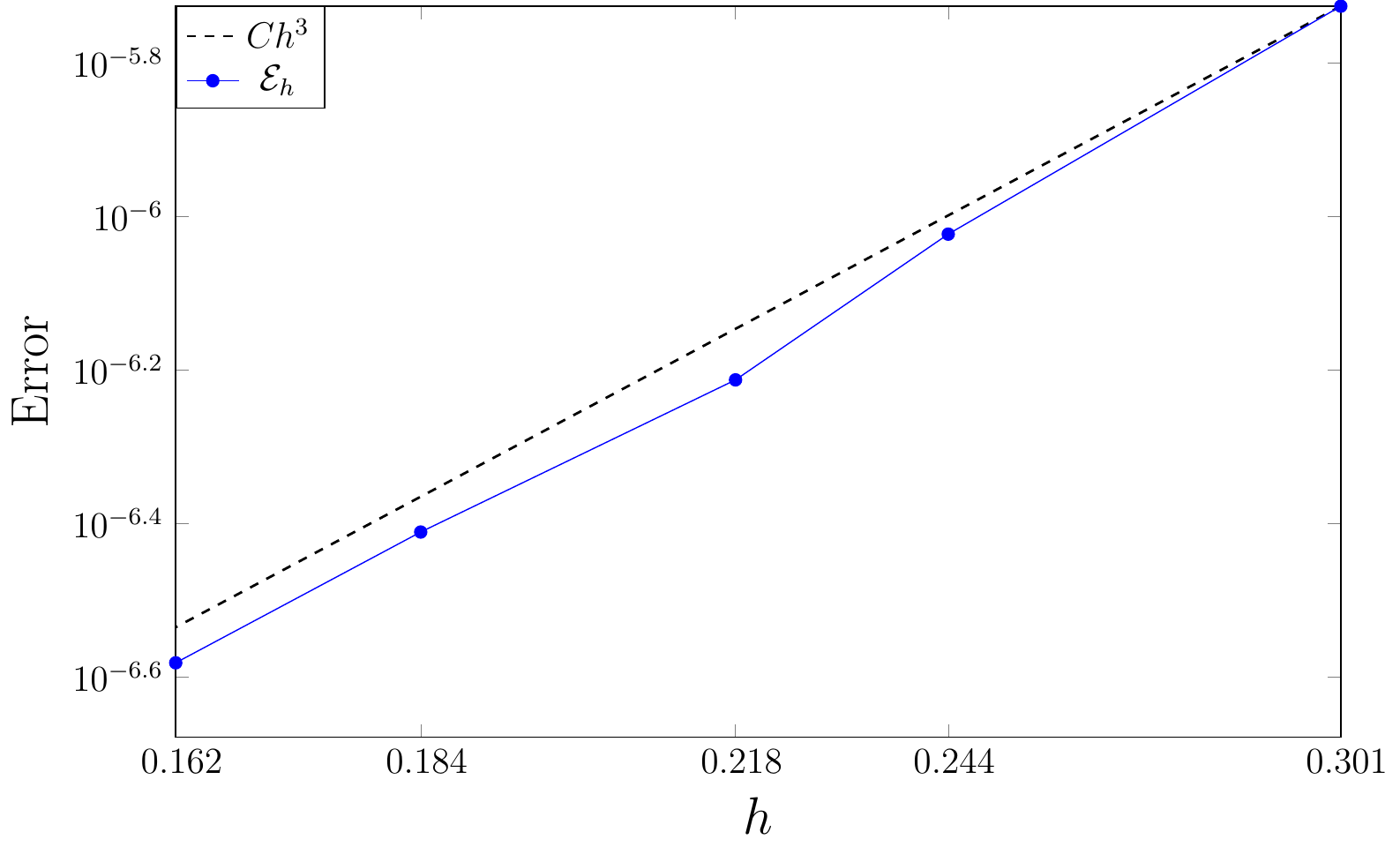}
  \includegraphics[scale=0.5]{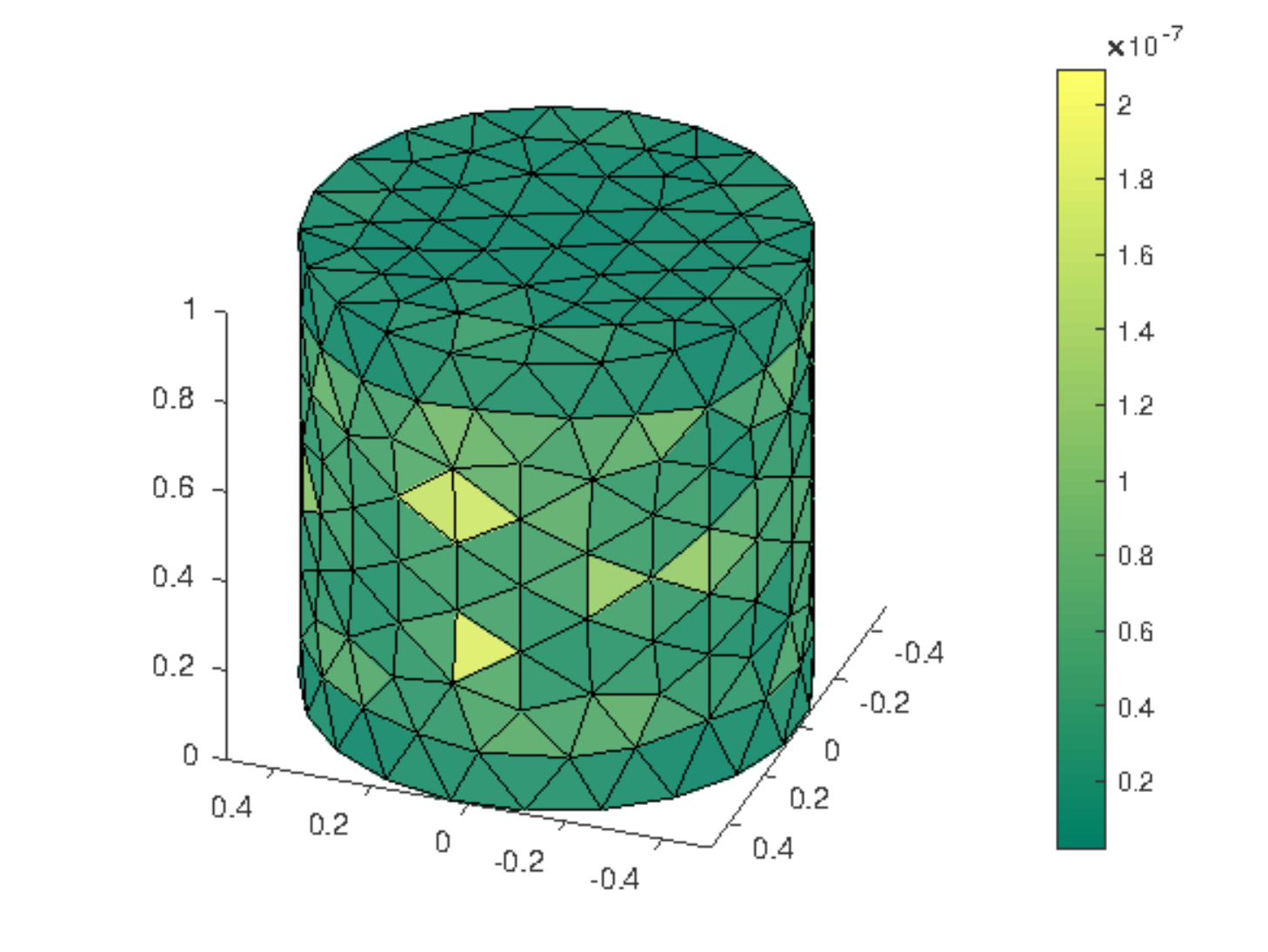}
 \caption{Log-log plot $\mathcal{E}_h$ vs $h$, and coarsest mesh along with element-wise error.}
 \label{fig:ErrorRodolfo}
\end{figure}

\subsection{Validation optimization routines} 
We devote this section to test our codes related to the minimization problem. We start by testing our 
equivalent expression for $d^\R \!j_h$, cf. (\ref{def:dj_h}). The main difficulty is the term that involves
$\S_h^*$, which can be computed as in (\ref{def:discreteAdjoint}). 
Figure~\ref{f:fd} (left) shows the the difference between $d^\R\!j_h(\z;\bm \xi)$ and its approximation using
a finite difference quotient. Here $\bm\xi$ denotes the random direction. As expected, we observe a linear
rate of convergence until the round-off error kicks in. 
\begin{figure}[!h]
 \centering
 \includegraphics[width=0.59\textwidth]{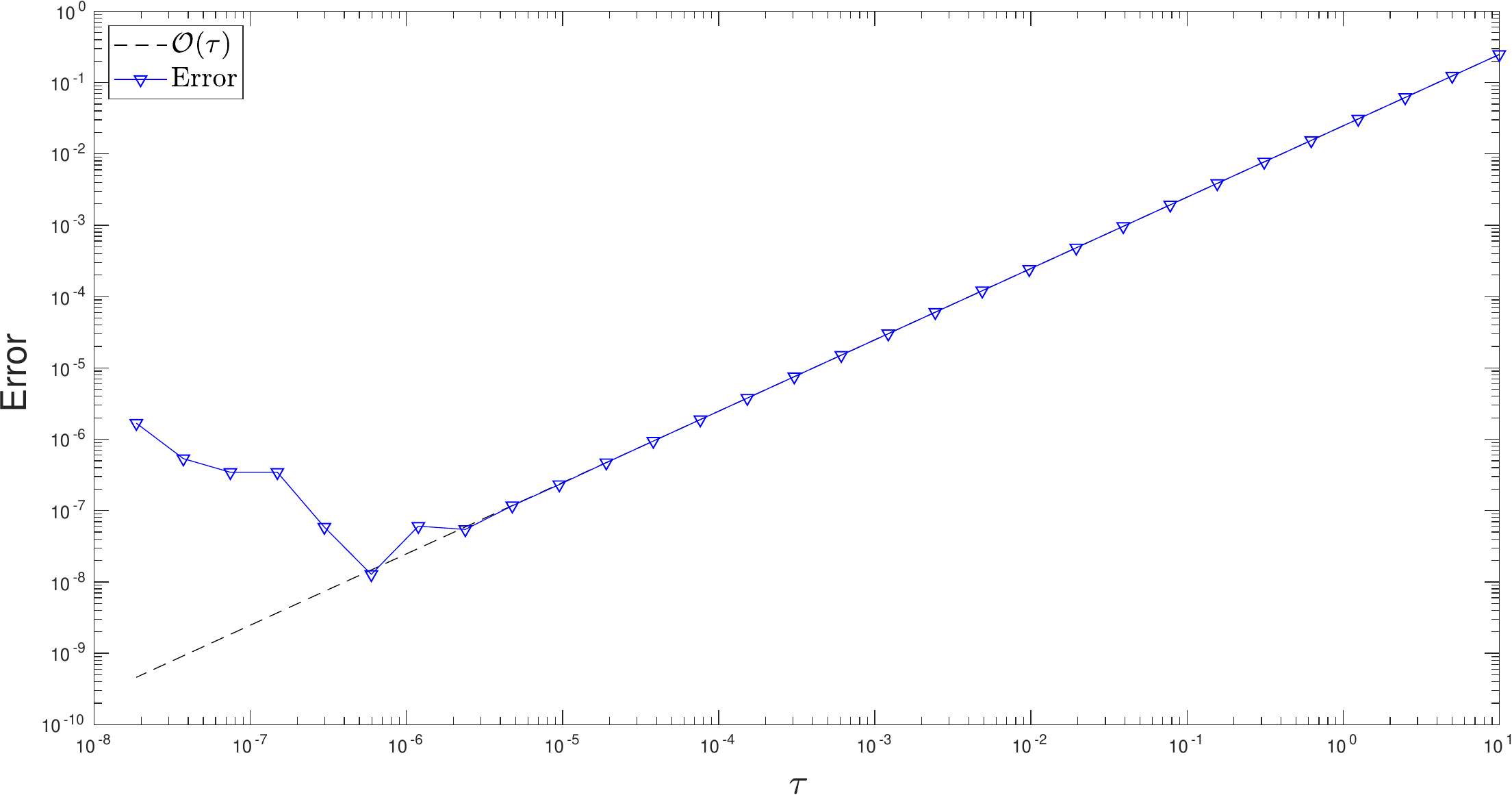}
 \includegraphics[width=0.4\textwidth]{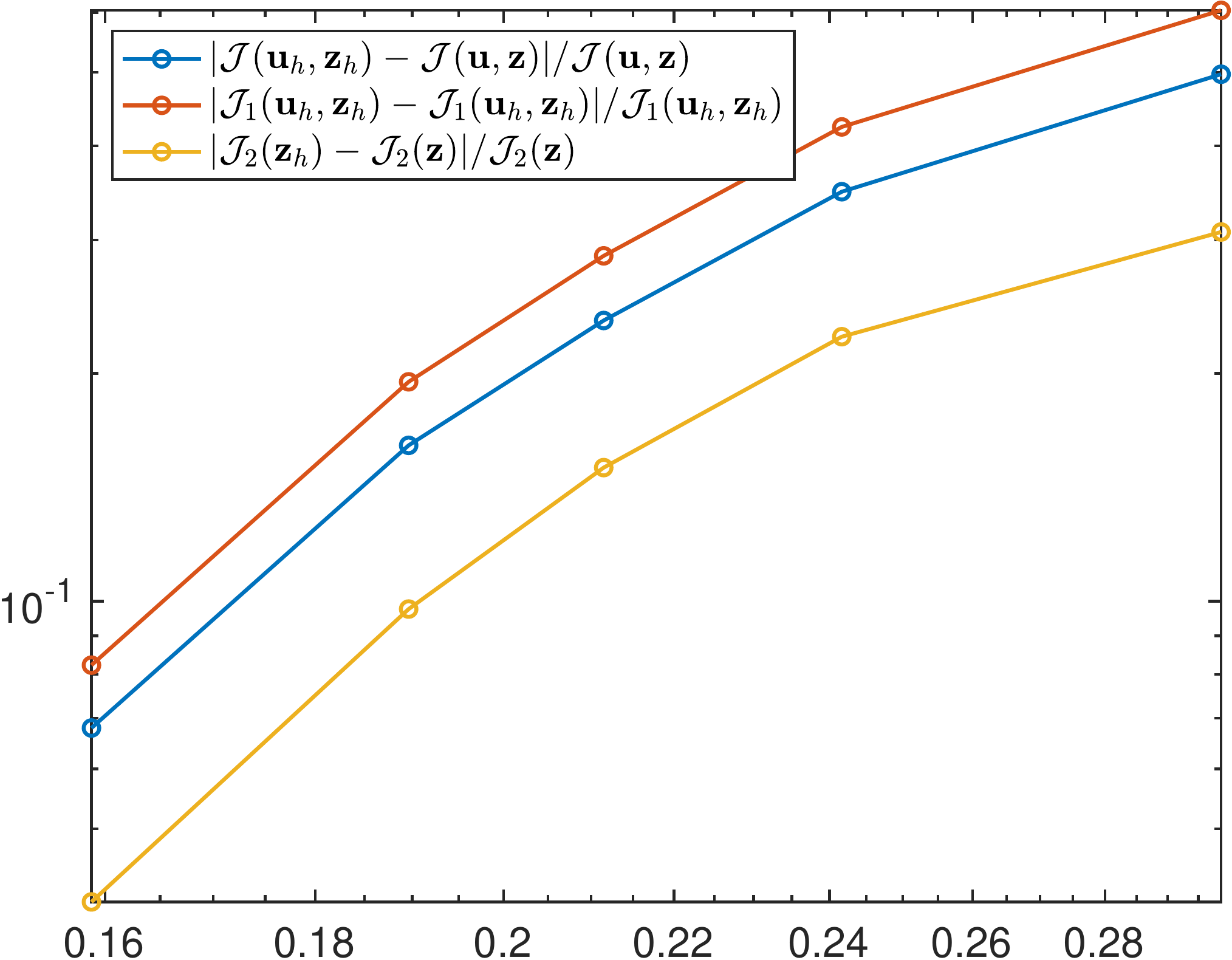}
 \caption{\label{f:fd} 
 {\bf Left:} Given a random direction $\bm\xi$, the panel shows the difference between
 $d^\R\!j_h(\z;\bm \xi)$ and its finite difference approximation. As expected, we observe a linear 
 rate of convergence. 
 {\bf Right:} We let $\alpha=1e^{-3}$ and $\beta = 0$ in the cost functional $\mathcal{J}(\cdot)$. Let 
 $\mathcal{J}_1(\u,\z) := \frac12 \|\u-\u_d\|^2$ and $\mathcal{J}_2(\z) := \frac{\alpha}{2} \| \curlS \z\|_{L^2(\Gamma)}^2$. 
 Moreover, let $\z$ be the optimal control corresponding to the finest mesh. Then the three curves illustrate 
 $|\mathcal{J}(\u_h,\z_h)-\mathcal{J}(\u,\z)|/\mathcal{J}(\u,\z)$, 
$|\mathcal{J}_1(\u_h,\z_h)-\mathcal{J}_1(\u,\z)|/\mathcal{J}_1(\u,\z)$, 
and $|\mathcal{J}_2(\z_h)-\mathcal{J}_2(\z)|/\mathcal{J}_2(\z)$ as $h \rightarrow 0$.
 }
\end{figure}

\subsection{Convergence of optimization problem}
It is challenging to construct an exact solution to the optimal control problem to directly show 
the applicability of Theorem~\ref{thm:ctrlconv}. Instead, we show the convergence of the cost 
functional $\mathcal{J}(\u_h,\z_h)$ to $\mathcal{J}(\u,\z)$ as $h \rightarrow 0$. Here $(\u,\z)$ is 
the optimal control corresponding to 
a mesh obtained after 6 refinements. The optimization problem is solved using the BFGS method
mentioned above with a stopping tolerance of $10^{-9}$. We let $\u_d = \bm H $, cf. (\ref{def:HEJ}) 
and let $\alpha = 10^{-3}$ and $\beta = 0$. 
Let $\mathcal{J}_1(\u,\z) := \frac12 \|\u-\u_d\|^2$ and $\mathcal{J}_2(\z) := \frac{\alpha}{2} \| \curlS \z\|_{L^2(\Gamma)}^2$. 
Figure~\ref{f:fd} (right) shows the errors $|\mathcal{J}(\u_h,\z_h)-\mathcal{J}(\u,\z)|/\mathcal{J}(\u,\z)$, 
$|\mathcal{J}_1(\u_h,\z_h)-\mathcal{J}_1(\u,\z)|/\mathcal{J}_1(\u,\z)$, 
and $|\mathcal{J}_2(\z_h)-\mathcal{J}_2(\z)|/\mathcal{J}_2(\z)$ as $h \rightarrow 0$. The expected convergence is observed.
\section*{Acknowledgement}
The authors would like to thank Thomas S. Brown and Peter Monk for reading this manuscript
and helpful comments. The authors would also like to thank Irwin Yousept and Pablo Venegas for several comments 
and pointing us to multiple useful references.

\bibliographystyle{plain}
\bibliography{references}

\end{document}